\newtheorem{theorem}{Theorem}[section]
\newtheorem{challenge}{Challenge}[section]
\newtheorem*{theorem*}{Theorem}
\newtheorem{lemma}[theorem]{Lemma}
\newtheorem{observation}[theorem]{Observation}
\newtheorem{claim}[theorem]{Claim}
\newtheorem{conjecture}[theorem]{Conjecture}
\theoremstyle{definition}
\newtheorem{definition}[theorem]{Definition}
\theoremstyle{remark}
\newtheorem{remark}[theorem]{Remark}
\newtheorem{example}[theorem]{Example}
\newcommand{\cf}{\mathcal{F}}
\newcommand{\cg}{\mathcal{G}}
\newcommand{\cs}{\mathcal{S}}
\newcommand{\blambda}{\bar{\lambda}}
\newcommand{\vv}{\vec{v}}
\newcommand{\vspan}{\operatorname{span}}
\newcommand{\floor}[1]{\left\lfloor #1 \right\rfloor}
\newcommand{\ceil}[1]{\left\lceil #1 \right\rceil}
\newcommand{\bF}{\mathbb{F}}
\newcommand{\C}{\mathcal{C}}
\newcommand{\cc}{\mathcal{C}}
\newcommand{\M}{\mathcal{M}}
\newcommand{\A}{\mathcal{A}}
\newcommand{\N}{\mathcal{N}}
\newcommand{\K}{\mathcal{K}}
\newcommand{\PP}{\mathcal{P}}
\newcommand{\cb}{\mathcal{B}}
\newcommand{\V}{\mathcal{V}}
\newcommand{\R}{\mathbb{R}}
\newcommand{\B}{\mathcal{B}}
\newcommand{\F}{\mathcal{F}}
\newcommand{\G}{\mathcal{G}}
\renewcommand{\c}{\mathbf{c}}
\renewcommand{\V}{\mathcal{V}}
\newcommand{\vx}{\vec{x}}
\newcommand{\vu}{\vec{u}}
\newcommand{\rank}{\textup{rk}}
\newcommand{\conv}{\textup{conv}}
\newcommand{\cone}{\textup{cone}}
\newcommand{\cm}{\mathcal{M}}
\newcommand{\cn}{\mathcal{N}}
\newcommand{\cp}{\mathcal{P}}
\author[Aharoni]{Ron Aharoni} \thanks{R. Aharoni:
Department of Mathematics, Technion, Israel
\url{ra@tx.technion.ac.il}.
Ron Aharoni is Supported by the Israel Science Foundation (ISF) grant no.\ 2023464 and the Discount Bank Chair at the Technion. This paper is part of a project that has received funding from the European Union's Horizon 2020 research and innovation programme under the Marie Sklodowska-Curie grant agreement no.\ 823748.
}
\author[Briggs]{Joseph Briggs} \thanks{J. Briggs (corresponding author, correspondence to: joseph.guy.briggs@gmail.com). Department of Mathematics, Auburn University, AL, USA \url{jgb0059@auburn.edu}}
\begin{document}

\thispagestyle{empty}

\title{Choice functions}
\maketitle
 
\begin{abstract}
This is  a survey paper on rainbow sets (another name for ``choice functions''). The main theme is  the distinction between two types of choice functions: those having a large (in the sense of belonging to some specified filter, namely closed up set of sets) image, and those that have a large domain and small  image, where ``smallness'' means belonging to some specified complex (a closed-down set). The paper contains some new results: (1)  theorems on scrambled versions, in which  the sets are re-shuffled before choosing the rainbow set, and (2) results on weighted and cooperative versions -  to be defined below.

\end{abstract}

\section{Introduction}

\begin{flushleft} 
{\em To make a prairie it takes a clover and a bee \\
One clover, and a bee.\\
And revery.\\
The revery alone will do\\
if bees are few.\\
\ \\
Emily Dickinson, Poem 1755}

\end{flushleft}
\bigskip

To make a mathematical field it takes  a fundamental theorem, and a good conjecture. The field that is the topic of this article satisfies both conditions. The theorem is Hall's marriage theorem, and the conjecture is Ryser's conjecture on transversals in Latin squares.

\begin{definition}
Let $\cs$ be a collection  of subsets of a set $V$.  An $\cs$-partial choice function is a  function $f: \cs' \to \bigcup \cs$ for some 
 $\cs' \subseteq \cs$ , such that $f(S) \in S$ for all $S \in \cs'$. Its image  is called an $\cs$-{\em partial rainbow set}, and if $\cs'=\cs$ the image is called  a {\em full} rainbow set.   
\end{definition}

\begin{remark}\hfill

 (a) When speaking of a rainbow set,  we implicitly retain the function itself, namely we remember where each representative came from.
 
 (b) The representation is  assumed to be injective, namely each representation is by a distinct element.
\end{remark}

The sets in $\cs$ are thought to ``color'' the elements, hence the name ``rainbow''. 
Hall's marriage theorem (anticipated by theorems of Frobenius and K\"onig) is:

\begin{theorem}
 Let $\cs$ be a family of sets. If $|\bigcup \cs'|\ge |\cs'|$ for  every $\cs' \subseteq \cs$, then there exists a full injective choice function. 
\end{theorem}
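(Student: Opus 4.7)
The plan is to prove Hall's theorem by induction on $n = |\cs|$, the standard Halmos--Vaughan-style argument. The base case $n=1$ follows by picking any element of the unique set, which is nonempty by the hypothesis applied to $\cs'=\cs$. For the inductive step I would split into two cases according to whether Hall's condition is strictly satisfied on all proper nonempty subfamilies, or is tight on some such subfamily.

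\textbf{Strict case.} Suppose $|\bigcup \cs'| \ge |\cs'|+1$ for every nonempty $\cs' \subsetneq \cs$. Choose any $S_0 \in \cs$ and any $v \in S_0$, set $f(S_0)=v$, and consider the reduced family $\cs^* = \{S\setminus\{v\}: S \in \cs\setminus\{S_0\}\}$. For any nonempty $\T \subseteq \cs\setminus\{S_0\}$, deleting $v$ reduces the union by at most one element, so $|\bigcup_{S\in \T}(S\setminus\{v\})| \ge |\bigcup \T| - 1 \ge |\T|$ by the strict hypothesis. Hall's condition holds on $\cs^*$, so induction furnishes an injective choice function whose image avoids $v$; combining with $f(S_0)=v$ yields the desired full injective choice function on $\cs$.

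\textbf{Tight case.} Suppose some nonempty $\cs_0 \subsetneq \cs$ satisfies $|\bigcup \cs_0|=|\cs_0|$. Induction applied to $\cs_0$ produces an injective choice function $f_0$ on $\cs_0$ whose image is exactly $\bigcup \cs_0$. I would then run induction on the complementary family $\cs_1 = \{S\setminus \bigcup\cs_0 : S \in \cs\setminus \cs_0\}$. To verify Hall's condition there, take any $\T \subseteq \cs\setminus \cs_0$; applying the hypothesis to $\cs_0 \cup \T$ gives $|\bigcup(\cs_0 \cup \T)| \ge |\cs_0|+|\T|$, and subtracting the contribution of $\bigcup \cs_0$ (which accounts for exactly $|\cs_0|$ elements) yields $|\bigcup_{S\in \T}(S\setminus \bigcup\cs_0)|\ge |\T|$. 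Induction produces an injective choice function $f_1$ on $\cs_1$ whose image is disjoint from $\bigcup \cs_0$, and gluing $f_0$ with $f_1$ gives a full injective choice function on $\cs$.

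The main subtlety is the tight case: one must pick the witnessing subfamily to preserve Hall's condition on the quotient, and the counting argument above is precisely what forces the complementary family to satisfy the hypothesis. A minor technical point is that the inductive step requires $\cs$ to be finite; for infinite $\cs$ one either applies compactness (if sets are finite) or invokes a transfinite version, but the statement as given is naturally read in the finite setting.
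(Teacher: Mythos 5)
Your proof is a correct and complete Halmos--Vaughan-style argument: the base case and both branches of the inductive step (strict versus tight subfamily) are handled properly, and the key counting step in the tight case --- applying Hall's condition to $\cs_0 \cup \T$ and subtracting off the $|\cs_0|$ elements of $\bigcup \cs_0$ --- is exactly what makes the complementary family satisfy the hypothesis. Note, however, that the paper does not actually prove this statement; it simply records Hall's marriage theorem as a known classical result (attributed to Hall, with antecedents in Frobenius and K\"onig) and moves on, so there is no proof in the paper to compare against. Your proof would serve as a self-contained substitute, and the standard caveat you flag at the end --- finiteness of $\cs$, with compactness or a transfinite argument needed otherwise --- is appropriate; the paper, too, implicitly works in the finite setting throughout.
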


The conjecture to which we allured is usually stated in the terminology of Latin squares. An $n \times n$ matrix $L$  is called a {\em  Latin square} if its entries belong to $[n]:= \{1, \ldots ,n\}$, 
and  every row and every column consists of  distinct elements. A (partial) {\em transversal} in $L$ is a set of distinct entries, lying in different rows and different columns. It is called {\em full} if its size is $n$.

\begin{conjecture}\label{brs}\cite{brualdi, stein}
Every $n \times n$ Latin square has a transversal of size $n-1$. If $n$ is odd, then there exists a full transversal. 
\end{conjecture}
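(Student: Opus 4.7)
This is a famous open conjecture (Brualdi--Stein for size $n-1$, Ryser for the full transversal when $n$ is odd), so any ``plan'' is necessarily a general strategy rather than a complete route. The natural reformulation I would adopt is as a rainbow matching problem: view the Latin square $L$ as a proper edge-coloring of $K_{n,n}$, with the two vertex classes indexing rows and columns and the edge $ij$ coloured $L(i,j)$. Each color class $M_c$ is then a perfect matching, and a partial transversal of size $k$ is precisely a rainbow matching of size $k$ drawn from $\cs = \{M_1,\dots,M_n\}$. Applying Theorem 1.2 directly to $\cs$ is essentially free but weak: Hall yields $n$ distinct \emph{edges}, one per color, but not necessarily pairwise vertex-disjoint, which is what we need.

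My starting point would be to build a large rainbow matching greedily or semi-randomly, then repair it by short augmentations. If $M$ is a rainbow matching using colors $C$ with $|C|=k < n-1$, then for any missing color $c\notin C$, the $n$ edges of $M_c$ have at most $2k$ of them blocked by vertices of $M$, so at least $n-2k$ are available. Iterating short alternating substitutions that swap in an edge of color $c$ in exchange for (at most) an old color of $M$, one aims for a local optimum. Parity appears naturally here, since each swap either strictly increases $|C|$ or is a ``null'' transposition of colors; tracking a global even/odd invariant is what splits the $n-1$ versus $n$ statements of the conjecture.

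The main obstacle, and the reason the conjecture is still unsettled, is the tight regime $k\in\{n-2,n-1\}$: the blocked edges can be arranged so that no bounded-length alternating substitution increases $|C|$, and a global rebalancing is required. I would try two modern ingredients to get past this. The first is \emph{absorption}: before the greedy construction, set aside a small random sub-structure in $\cs$ flexible enough to complete any near-transversal built from the rest. The second is the topological/matroidal approach of Aharoni--Berger, deducing the existence of a large rainbow independent set in $L(K_{n,n})$ from connectivity bounds for the intersection of the two transversal matroids encoding the row/column and color constraints. The hardest step in either route is the last one: upgrading from ``almost all $n$ colours used'' to \emph{exactly} $n-1$ or $n$ requires a delicate parity analysis of how the absorber interacts with the final two colours, and this is precisely where the odd/even dichotomy in Ryser's statement is enforced.
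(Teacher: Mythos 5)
The statement is Conjecture \ref{brs}, the Brualdi--Stein conjecture (with the odd-$n$ case being Ryser's conjecture), which is \emph{open}: the paper offers no proof, only the current best approximation, a transversal of size $n-O(\log n/\log\log n)$ due to Keevash--Pokrovskiy--Sudakov--Yepremyan \cite{KPSY}. You are upfront that you are offering a strategy rather than a proof, which is the honest call, and the strategy is a reasonable orientation: the translation of a Latin square into a proper edge-colouring of $K_{n,n}$ whose colour classes are perfect matchings, so that a partial transversal is a rainbow matching, is exactly how the paper relates Conjecture \ref{brs} to Conjecture \ref{abab} and Theorem \ref{drisko}; the greedy-plus-short-substitution route is essentially Woolbright's $n-\sqrt{n}$ argument \cite{woolbright}; and absorption and the Aharoni--Berger topological/matroidal machinery are indeed the tools behind the strongest partial results.

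Two concrete inaccuracies in the writeup. First, the ``parity invariant'' narrative does not match what is known: the split between ``size $n-1$'' and ``full transversal'' is not an artefact of swap-parity in an augmenting scheme. It reflects a genuine structural obstruction --- for $n$ even the cyclic group table has no full transversal at all, and for $n$ odd no mechanism is known (parity-based or otherwise) that promotes an $(n-1)$-transversal to a full one. Second, your count that a colour class outside a rainbow matching of size $k$ has at least $n-2k$ unblocked edges is correct, but the inference that bounded-length alternating substitutions only stall near $k \in \{n-2,n-1\}$ is overstated: local optima for short-substitution arguments already appear around size $n-\sqrt{n}$, which is precisely why Woolbright's bound plateaus there and why the modern improvements rely on inherently global techniques (absorption, algebraic/hypergraph removal) rather than bounded-length swaps. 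In short, the proposal correctly situates the problem in the literature but does not --- and, the conjecture being open, cannot --- constitute a proof; that is the gap.
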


The best result to date is by Keevash-Pokrovsky-Sudakov-Yepremyan:
\begin{theorem}\cite{KPSY}
 Every $n \times n$ Latin square has a transversal of size $n-O(\log n/ \log \log n)$.
\end{theorem}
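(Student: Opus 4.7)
The plan is to view the Latin square $L$ as a proper edge-coloring of the complete bipartite graph $K_{n,n}$ by $n$ colors: the cell $(i,j)$ with entry $L(i,j) = k$ becomes the edge $ij$ colored $k$, so rows and columns correspond to the two vertex classes and symbols correspond to colors. Under this correspondence a full transversal is exactly a rainbow perfect matching, and our goal becomes finding a rainbow matching missing only $O(\log n/\log\log n)$ vertices and colors. The overall strategy is the \emph{absorbing method}: first build a small rainbow ``absorber'' $A$ inside $L$ which is extremely flexible, then grow $A$ into a near-perfect rainbow matching using a pseudorandom greedy procedure, and finally use the flexibility of $A$ to swallow the polylogarithmic defect.

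The first step, and the technical heart of the argument, is constructing the absorber. I would set aside a random subset of rows, columns, and colors of size roughly $n/\mathrm{polylog}(n)$, and inside this reserve construct a rainbow matching $A$ together with many alternative rainbow matchings covering the same vertex set but substituting any prescribed short list of $t = O(\log n/\log\log n)$ new colors in place of $t$ of the colors used by $A$. Concretely, one builds small ``switching gadgets'' (pairs or short alternating rainbow structures) whose existence in a random sub-square is guaranteed by first- and second-moment calculations, and then stitches them together recursively so that the resulting absorber offers exponentially many swap options. Achieving the sharp $\log n/\log\log n$ bound, rather than the earlier $\log^2 n$ of Hatami--Shor, requires an iterated/nested absorber where each layer multiplies the flexibility, with the failure probability at each layer controlled by a switching or entropy-compression argument.

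The second step is to extend $A$ to a partial rainbow matching of size $n - o(n)$ using the rows, columns and colors not yet used by $A$. I would run the R\"odl nibble, or equivalently a random greedy process on the auxiliary $3$-partite $3$-uniform hypergraph whose edges are triples (row, column, symbol) corresponding to cells of $L$. Since $L$ is a Latin square this hypergraph is regular with small codegrees, so Pippenger-type or Alon--Kim--Spencer-type theorems give a rainbow matching covering all but $\mathrm{polylog}(n)$ rows, columns and colors. Concentration along an appropriate exposure martingale ensures that the leftover is not just small in expectation but small with very high probability.

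Finally, the absorber is invoked: the $t = O(\log n/\log\log n)$ uncovered rows, columns and colors are matched up using one of the pre-arranged alternative rainbow matchings inside $A$, and this is spliced into the near-perfect matching from the nibble to produce a rainbow matching of size $n - O(\log n/\log\log n)$, i.e.\ a transversal of the claimed size. The main obstacle is clearly the absorber construction: one must simultaneously guarantee (i) that enough gadgets exist inside a tiny random subsquare, (ii) that they can be combined into one coherent structure whose alternative matchings cover \emph{every} possible remainder of size $t$, and (iii) that the whole construction survives a union bound over the $\binom{n}{t}^3$ potential leftover sets, which is exactly where the $\log n/\log\log n$ (rather than $\log n$) threshold is forced.
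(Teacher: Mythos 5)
This theorem is stated in the survey as a citation to Keevash--Pokrovskiy--Sudakov--Yepremyan; the paper offers no proof of its own, so there is nothing in the paper to compare against line by line. Your sketch is judged on its own terms as a blind reconstruction of the KPSY argument.

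The overall plan you describe -- interpret $L$ as a proper $n$-edge-colouring of $K_{n,n}$ (equivalently a $3$-partite $3$-uniform linear hypergraph on rows, columns, symbols), run a semi-random nibble to get a rainbow matching covering all but a polylogarithmic remainder, and close the gap with a pre-built flexible ``absorber'' -- is indeed the right family of techniques, and it correctly identifies the earlier Hatami--Shor $O(\log^2 n)$ bound as the benchmark to beat. However, as written the proposal is a roadmap rather than a proof, and the part you yourself flag as the ``technical heart'' is exactly the part that is missing. Specifically: (i) the absorber construction is only named, not carried out -- you write ``iterated/nested absorber where each layer multiplies the flexibility,'' but you never specify the gadget, why it exists inside an adversarial Latin square (first/second moments over \emph{what} random object? rows, columns and symbols are not independent in a Latin square, so a uniformly random reserve of each is not the same as a random subsquare), or how the layers compose; (ii) the union bound over $\binom{n}{t}^3$ leftover triples is the step you say ``forces'' the $\log n/\log\log n$ threshold, but no estimate is given showing why the per-leftover success probability is $n^{-\omega(t)}$ precisely when $t \ll \log n/\log\log n$ and fails otherwise -- without this calculation the exponent is unexplained and could just as well be $\log^2 n$ again; (iii) there is no argument that the nibble and the absorber can be run on disjoint resources, i.e.\ that the random greedy process can be steered to leave its defect \emph{inside} the set of rows/columns/symbols the absorber is prepared to fix, which is a genuine coupling issue and not automatic. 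Until the gadget is made explicit and the quantitative trade-off between absorber flexibility and union-bound size is actually computed, the claimed $O(\log n/\log\log n)$ is an assertion, not a conclusion.
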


The paper is not presumed to be comprehensive in any way, but rather to touch on some main themes. It also contains a few new results: on ``scrambled'' versions (to be defined in Section \ref{sec:scrambling}), on cooperative versions (Section \ref{coopsection}) and on weighted versions (Section \ref{sec:weighted}).

\section{A general theme - big in one sense, small in another}

 Hall's marriage theorem concerns 
a well-known theme: the existence of an object satisfying two opposing requirements. It should be large in one sense, and small in another. For example, in the marriage theorem the choice function should be large on the side of the sets - represent all of them, and small on the elements side - being injective. 

A hypergraph $H$ is called a {\em simplicial complex}, or just a {\em complex} if it is  closed down, namely $e \in H, ~f \subseteq e$ imply $f \in H$. A simplicial complex is a {\em matroid} if all its maximal edges are also of  maximal size, and this is true also for every induced sub-complex.  An edge of the matroid is said to be {\em independent}, a set containing a maximal edge  is said to be {\em spanning}, and a set that is both independent and spanning is a {\em base}.  Given a complex $\C$ we think of its elements as ``small'', and a spanning set in a matroid is considered ``large''. Of particular interest from a combinatorial point of view are {\em partition matroids}, defined by a partition of the ground set, where a set belongs to the matroid if it meets every part in at most one vertex. 
A basis of a partition matroid is just a full rainbow set of the parts.

In a  more general setting  a matroid $\M$ is given on $V:=\bigcup \cf$, and to the   injectivity requirement  a requirement is added   that the rainbow set $R$ satisfies $R \in \M$. In Hall's theorem $\M$ is the partition matroid whose parts are the $v$-stars, $v \in V$. Hall's theorem then generalizes to the following \cite{rado}: 

\begin{theorem}[Rado]
\label{rado}
There exists a full rainbow set $R$ belonging to $\M$
if and only if 
 $rank_\M(\cf_I) \ge |I|$ for every $I \subseteq [m]$.\end{theorem}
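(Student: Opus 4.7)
The plan is to give the classical proof: necessity is essentially by inspection, while sufficiency goes by a shrinking induction powered by submodularity of the matroid rank.

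\textbf{Necessity} is immediate. If $f\colon [m]\to \bigcup\cf$ is an injective choice function with image $R\in\M$, then for any $I\subseteq[m]$ the set $\{f(i):i\in I\}$ is an independent subset of $\cf_I$ of cardinality $|I|$, so $\rank_\M(\cf_I)\ge |I|$.

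For \textbf{sufficiency} I would induct on $N=\sum_{i\in[m]}(|F_i|-1)$. In the base case $N=0$, each $F_i$ is a singleton $\{v_i\}$; the hypothesis applied to pairs $\{i,j\}$ forces the $v_i$ to be distinct, and applied to $I=[m]$ shows $\{v_1,\dots,v_m\}$ is independent in $\M$, giving the full rainbow set. For the inductive step I would establish a \emph{shrinking lemma}: whenever $|F_j|\ge 2$ and $x,y\in F_j$ are distinct, the rank condition is preserved when $F_j$ is replaced by at least one of $F_j\setminus\{x\}$ or $F_j\setminus\{y\}$. Applying this and invoking the induction hypothesis to the modified family then finishes the argument.

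To prove the shrinking lemma, I would suppose for contradiction that both replacements fail, witnessed by index sets $I$ (for $x$) and $J$ (for $y$). Write $\cf^x$ and $\cf^y$ for the two modified families. Since removing $x$ from $F_j$ only alters $\cf_I$ if $j\in I$ and $x$ appears in no other $F_i$ with $i\in I$, the failing sets must satisfy $j\in I\cap J$, $\cf^x_I=\cf_I\setminus\{x\}$, and $\cf^y_J=\cf_J\setminus\{y\}$. The key calculation is then:
\[
\cf^x_I\cup\cf^y_J=\cf_{I\cup J}, \qquad \cf^x_I\cap\cf^y_J\supseteq \cf_{(I\cap J)\setminus\{j\}},
\]
because $y$ survives in $\cf^x_I$ and $x$ survives in $\cf^y_J$ (giving the union), and $x,y$ are absent from $F_i$ for $i\in (I\cap J)\setminus\{j\}$ (giving the intersection). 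Submodularity of $\rank_\M$ combined with the rank hypothesis applied to $I\cup J$ and $(I\cap J)\setminus\{j\}$ then yields
\[
(|I|-1)+(|J|-1)\ge \rank_\M(\cf^x_I)+\rank_\M(\cf^y_J)\ge |I\cup J|+|I\cap J|-1=|I|+|J|-1,
\]
an absurdity.

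The main obstacle, and the place that needs the most care, is the bookkeeping in the shrinking lemma: correctly identifying \emph{when} removing $x$ from $F_j$ actually changes $\cf_I$, and correspondingly controlling what $\cf^x_I\cap\cf^y_J$ contains. Once this is in hand the submodularity computation is a one-liner and the induction closes.
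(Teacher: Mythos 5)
The paper states Rado's theorem citing the original 1942 paper and does not supply a proof of its own, so there is nothing internal to compare against. Your argument is the standard and correct proof: necessity by inspection, sufficiency by downward induction on $\sum_i(|F_i|-1)$ with the shrinking lemma driven by submodularity of the matroid rank. One small remark on justification: in establishing $\cf^x_I\cap\cf^y_J\supseteq\cf_{(I\cap J)\setminus\{j\}}$, the reason is simply that for $i\in(I\cap J)\setminus\{j\}$ the set $F_i$ is untouched by either modification (it is $F_j$ alone that is altered), so $F_i\subseteq\cf^x_I$ and $F_i\subseteq\cf^y_J$; the fact that $x,y$ happen to be absent from those $F_i$ (which indeed follows from $\cf^x_I=\cf_I\setminus\{x\}$ and $\cf^y_J=\cf_J\setminus\{y\}$) is not what the inclusion rests on. With that minor clarification the proof is complete and correct.
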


Edmonds \cite{edmonds} offered a further generalization. 
Consider the bipartite graph whose one side is $\cs$ and the other is $\bigcup \cs$, where $S \in \cs$ is connected to $x \in \bigcup \cs$ if $x \in S$.  The two matroids are the two  partition matroids on  the edge set $E$ of the graph, whose parts are the stars
in  the two respective sides. 
In this setting, a set of edges contains a full choice function if it is spanning in the first matroid, and it is injective if it belongs to the second matroid. 

In a further generalization,  one of the matroids is replaced by a general simplicial complex.

\begin{definition}
    Let $\cm, ~\cc$ be a matroid and a complex, respectively, on the same ground set. An  $(\cm, \cc)$-{\em marriage} is a set spanning in $\cm$ and belonging to $\cc$. An  {\em  $(\cm, \cc)$-rainbow set} is a set  belonging $\cm \setminus \C$.
\end{definition}

The ``if'' direction of Theorem \ref{rado} still holds in this setting, with ``rank'' replaced by ``coonectivity'', to be defined below. 

We shall be particularly interested in  rainbow {\em matchings}. 
Given a hypergraph on a set $V$ and a partition of its edge set, a {\em partial rainbow matching } is a partial choice function on the parts, whose image is a matching (=set of disjoint edges). A more general notion is that of  rainbow independent sets in  a graph. 
Matchings are the special case, of independent sets in a line graph.

Given a class $\C$ of hypergraphs, we write $(a,b) \to^\C c$ if every family of $a$ matchings of size $b$ in a hypergraph belonging to  $\C$ has a rainbow matching of size $c$. Let $\cg$ be the class of all graphs, and $\B$  the class of bipartite graphs.

Conjecture \ref{brs} has a  generalization, by Berger and the first author, that can be formulated in this terminology. We shall refer to it as the ``A-B conjecture'':

\begin{conjecture}
\label{abab}\cite{ab1}
$(n,n) \to ^\cg n-1$. 
\end{conjecture}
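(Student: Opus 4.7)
The A-B conjecture is a notorious open problem, so my plan is less a blueprint for a complete proof than a description of the most promising attack: the topological connectivity method of Aharoni--Berger--Ziv, which has yielded all the leading partial results. Let $G = \bigcup_{i=1}^n F_i$ where each $F_i$ is a matching of size $n$, and let $\M(G)$ denote the matching complex of $G$ (an abstract simplicial complex whose faces are the matchings in $G$). A rainbow matching is a face of $\M(G)$ that is also a partial rainbow set for the partition $F_1 \sqcup \cdots \sqcup F_n$ of $E(G)$. By Meshulam's topological generalization of Rado's Theorem \ref{rado} (the version for complexes, replacing rank by homological/topological connectivity $\eta$), a rainbow matching of size $n-1$ exists provided that for every $I \subseteq [n]$, the connectivity satisfies $\eta\bigl(\M(\bigcup_{i\in I} F_i)\bigr) \ge |I| - 1$.

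First, I would verify the boundary case: for $|I| = n$ one must show $\eta(\M(G)) \ge n - 1$, which is essentially the conjecture itself; and for small $|I|$ one uses the known fact that the matching complex of a graph on at most $k$ vertices has $\eta \ge \lceil k/3 \rceil - 1$ or better. So the real task reduces to a uniform connectivity bound of the form $\eta(\M(H)) \ge \nu(H) - c$ for $H$ a disjoint union of matchings, where $\nu$ is the matching number and $c$ is a small constant (ideally $c = 1$).

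The proof strategy for the connectivity bound would be an induction using the deletion/link long exact sequence for an arbitrary edge $e = uv$: removing $e$ from all matchings, the complex $\M(H \setminus e)$ sits between $\M(H)$ and the link of $e$, which is $\M(H - \{u,v\})$. If one can always choose $e$ so that the link loses at most one unit of connectivity while the deletion loses none, the induction closes. The hard part, and the reason the conjecture is still open, is that this naive edge selection fails: structural obstructions (short alternating cycles, repeated edges across the $F_i$, near-regular subgraphs) force the link to drop connectivity by more than expected.

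My main obstacle, therefore, would be the same one everyone else has faced: finding an edge-selection or an augmenting-path lemma sharp enough to give $\eta \ge n - 2$ rather than the weaker $\eta \ge (3/5)n$-type bounds currently available. One concrete subgoal I would attempt is a \emph{stability} statement: show that the extremal configurations for the connectivity bound are highly structured (e.g.\ disjoint unions of copies of $K_{n,n}$ or of Latin-square-like graphs), and then treat those configurations directly by an explicit alternating-path argument in the spirit of the Keevash--Pokrovskiy--Sudakov--Yepremyan approach to Conjecture~\ref{brs}. Absent such a stability result, I would view a full proof of Conjecture~\ref{abab} as out of reach by present methods.
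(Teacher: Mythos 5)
The statement you were asked to prove is Conjecture~\ref{abab}, which the paper presents as an \emph{open conjecture} --- there is no proof of it in the paper, and indeed no proof is known. You recognized this and, rather than fabricating an argument, gave an honest strategic sketch and explicitly flagged where it breaks down. That is the right call, and the sketch itself is well-informed: the reduction to a connectivity lower bound for the matching complex via a defect version of Topological Hall (Theorem~\ref{matcomp} in this paper) is indeed the framework behind the leading partial results such as Woolbright's $n-\sqrt{n}$ bound and the $\frac{2n}{3}-1$ bound from \cite{abcz}, and your identification of the deletion/link long exact sequence as the inductive engine, together with the observation that edge selection is where the induction fails, is an accurate diagnosis of where the field is stuck.

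A few small caveats on the details. The condition you state, $\eta\bigl(\M(\bigcup_{i\in I}F_i)\bigr)\ge|I|-1$ for all $I\subseteq[n]$, does yield a rainbow matching of size $n-1$ by the defect form of Topological Hall, but you should note that the case $|I|=n$ is not merely ``essentially the conjecture'': the connectivity bound $\eta(\M(G))\ge n-1$ for $G$ a union of $n$ size-$n$ matchings is \emph{strictly stronger} than the conjecture itself (it would give the rainbow matching for \emph{any} partition of $E(G)$ into $n$ parts with suitable ranks, not just the one at hand), so proving it may be harder than proving the conjecture directly. Also, the connectivity lower bound for small $|I|$ that you cite ($\lceil k/3\rceil-1$ in terms of vertex count) is not quite the form one uses; the relevant known bounds are stated in terms of $\nu$, and for unions of $k$ matchings of size $n$ one gets roughly $\eta\ge\frac{\nu}{2}$-type estimates, which is exactly why the induction loses a constant factor rather than a constant. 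None of this changes your conclusion: the conjecture is out of reach of the method as it stands, and your proposal should be read as a research plan, not a proof.
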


In words: $n$ matchings of size $n$ in any graph 
have a rainbow matching of size $n-1$.

In \cite{ab1} this was formulated only for bipartite graphs, but we know no counterexample also for general graphs.

In the bipartite case, doubling the number of matchings in the condition yields a well-known theorem of Drisko:

\begin{theorem} \label{drisko}\cite{drisko, ab1} 
$(2n-1,n) \to ^\B n$.
\end{theorem}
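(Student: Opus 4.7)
My plan is an exchange argument by contradiction, mirroring the classical augmenting-path approach for bipartite matching. Suppose $M_1, \ldots, M_{2n-1}$ are matchings of size $n$ in a bipartite graph $G = (A \cup B, E)$ admitting no rainbow matching of size $n$. Choose a partial rainbow matching $R$ of maximum size $k < n$, and among all such maxima, one whose index set $I = I(R) \subseteq [2n-1]$ is lexicographically smallest. Then $J := [2n-1] \setminus I$ has $|J| = 2n - 1 - k \geq k + 1$, so unused matchings outnumber used ones.

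Since $R$ cannot be extended, every edge of every $M_j$ with $j \in J$ meets $V(R)$, a set of only $2k$ vertices. Because $M_j$ is itself a matching of size $n > k$, a pigeonhole argument on the $A$-vertices of $V(R)$ gives at least $n - k \geq 1$ ``$A$-pendant'' edges of $M_j$ (whose $A$-endpoint lies in $V(R)$ but whose $B$-endpoint does not) and, symmetrically, at least $n - k$ ``$B$-pendant'' edges. The plan is to build an auxiliary digraph $D$ on $I \cup J$ whose arcs $i \to_A j$ (for $i \in I$, $j \in J$) record that some $A$-pendant edge $f \in M_j$ satisfies $f \cap A = e_i \cap A$, so that $(R \setminus \{e_i\}) \cup \{f\}$ is again a maximum rainbow matching, now with index set $(I \setminus \{i\}) \cup \{j\}$. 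Symmetric arcs $i \to_B j$ encode $B$-side swaps. The lex-minimality of $I$ forbids any single swap with $j < i$, and more generally forbids alternating walks in $D$ that net a lex-smaller index set.

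The main obstacle --- and the heart of the proof --- is using the pendant-edge abundance to force an augmenting alternating walk. Concretely, from some $j_0 \in J$ I would trace a walk alternating $A$-swaps and $B$-swaps through $D$; at each intermediate step the current configuration is a size-$k$ partial rainbow matching with a modified index set and some pendant edge incident to a new free vertex off $V(R)$. Because $|J| > |I|$ and each $j \in J$ contributes pendant edges of both types, a counting argument prevents this walk from being permanently trapped in $I$, so it must either terminate at a second free vertex --- yielding an augmenting structure and a rainbow matching of size $k + 1$, contradicting maximality --- or close into a cycle whose re-routing produces a lex-smaller index set, contradicting the minimality of $I$. The condition $|J| \geq k + 1$, equivalently the hypothesis of $2n - 1$ rather than $2n - 2$ matchings, is what guarantees this alternating walk exists; the tightness example of $2n - 2$ copies of a properly $2$-edge-colored $C_{2n}$ (giving matchings of size $n$ without a rainbow matching of size $n$) shows why the bound cannot be improved and guides the counting.
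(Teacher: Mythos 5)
Your proposal takes a genuinely different route from the paper. The survey does not prove Drisko's theorem directly: it cites \cite{drisko, ab1}, notes that the $K_{n,n}$ case follows from B\'ar\'any's colorful Carath\'eodory theorem, and in Section~7 derives it (as the $q=0$ case of Theorem~\ref{manypaths}) from the network correspondence together with Theorem~\ref{stairs}, whose only known proof is topological. Your plan is instead a purely combinatorial exchange argument, so if it worked it would be a meaningful alternative. The preliminary steps are fine: taking a maximum partial rainbow matching $R$ of size $k<n$ and noting that each unrepresented $M_j$ has at least $n-k$ $A$-pendant and $n-k$ $B$-pendant edges onto $V(R)$ is correct, and these pendant edges do land on distinct vertices of $V(R)$ because $M_j$ is a matching.

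The gap is exactly where you flag it, and it is not a small one. First, the object you are walking on is not a fixed digraph: after one swap $(R\setminus\{e_i\})\cup\{f\}$, the set $V(R)$ changes, which changes which edges of the remaining matchings are pendant and to which vertices they attach, so $D$ is rebuilt at every step. You need an invariant that survives this rebuilding, and the proposal does not supply one. Second, the lex-minimality of $I$ only directly forbids \emph{single} swaps that replace some $i\in I$ by a smaller index $j\in J$; it does not by itself forbid an alternating walk that first goes lex-up and then lex-down, nor does it obviously rule out a closed walk (a closed exchange cycle can return you to the same index set $I$, not a lex-smaller one, in which case there is no contradiction). Third, the assertion that ``$|J|\geq k+1$ and both pendant types per $j$'' forces the walk either to reach a second free vertex or to close profitably is precisely the combinatorial heart of the theorem, and it is stated as a hope rather than argued. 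Alternating-path arguments of this flavour are known to give $n-\sqrt{n}$ (Woolbright) and need substantially more structure to close the gap to $n$; you would need to exhibit a concrete potential function or Hall-type counting on the evolving pendant structure, and none is given. As it stands the proposal is a plausible program, not a proof.
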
 
In words: 
$2n-1$ matchings of size $n$ in a bipartite graph have a rainbow matching of size $n$. 
A very nice recent result, using algebra for its proof, is:

\begin{theorem} \cite{cst} \label{thm:cst}
Let $\C_r$ be the class of $r$-uniform hypergraphs. 
$\to^{\C_r} t$. 
\end{theorem}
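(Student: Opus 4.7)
The plan is to follow the algebraic route, as flagged in the remark preceding the statement. The natural template is to mimic the Alon--Friedland--Kalai style arguments that have successfully handled rainbow matching problems of Drisko type. Concretely, to each of the given $a$ matchings $M_1,\dots,M_a$ of size $t$ I would associate an indeterminate $x_{e,i}$ for every edge $e\in M_i$, and build a multivariate polynomial $P$ whose monomials are indexed by potential rainbow selections, designed so that a monomial survives precisely when its associated edges are pairwise disjoint. The target is then to identify a single monomial of total degree $(r-1)t$ (or whatever the exponent profile dictated by $r$-uniformity prescribes) whose coefficient is forced to be nonzero, so that the combinatorial Nullstellensatz yields a valid rainbow matching.

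The first concrete step is to select the right polynomial encoding. In the graph case ($r=2$) one typically works with a Vandermonde-like product over edges, interpreting injectivity of the endpoints as non-vanishing of differences of endpoint variables. For $r$-uniform hypergraphs, the analogous construction requires a higher-order ``anti-symmetrising'' factor on each edge---essentially a product over unordered pairs of vertices in the edge, or equivalently a determinantal expression that vanishes whenever any two of the chosen edges share a vertex. Once this polynomial is set up, the second step is to read off the degree and check that the total degree matches the ambient number of variables along each axis, which is where the hypothesised lower bound on $a$ (as a function of $r$ and $t$) will enter.

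The third and decisive step is proving the non-vanishing of the relevant coefficient. I would expect this to reduce, via Alon--Tarsi style manipulations, to the statement that a certain permanent, determinant, or mixed-determinant of a structured matrix is nonzero. Here one can either exploit symmetries of the matrix (eigenvalue arguments, as in the Alon--Tarsi Latin square approach) or a parity/sign-balanced counting argument that shows an orientation of the hypergraph contributes a nonzero net sum. This is the main obstacle: translating the $r$-uniform disjointness constraint into an algebraic identity that survives the telescoping of alternating signs is substantially subtler than in the graph case, because the ``local'' factor on each edge is no longer a single difference $x-y$ but an object of degree $\binom{r}{2}$ (or $r-1$, depending on the encoding), and the interaction of these factors across edges has to be controlled uniformly.

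Should the polynomial method resist, my fallback would be the topological route. One forms the matching complex $\M(H)$ of the $r$-uniform host hypergraph $H$ and invokes a topological rainbow Hall theorem (in the spirit of Aharoni--Berger--Meshulam) that converts high connectivity of the links $\operatorname{lk}_{\M(H)}(\sigma)$ into the promised rainbow matching. The task then becomes bounding the topological connectivity of $\M(H)$ from below in terms of the edge-count of the matchings, using nerve or cluster lemmas adapted to $r$-uniformity. The main obstacle in that direction is essentially the same algebraic/topological content: one needs a sharp bound on the connectivity of matching complexes of $r$-uniform hypergraphs, which is exactly the content encoded by the non-vanishing permanent in the polynomial approach.
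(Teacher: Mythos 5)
The displayed statement in the paper is truncated: the left-hand side of ``$\to^{\C_r} t$'' is missing. The cited Correia--Sudakov--Tomon result is, filled in, of the form $(rt-r+1,\,t)\to^{\C_r} t$, i.e.\ $rt-r+1$ matchings of size $t$ in an $r$-uniform hypergraph have a rainbow matching of size $t$, generalizing Drisko's $(2n-1,n)\to^{\B}n$ at $r=2$.

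Your submission is a research plan, not a proof, and the plan does not match the actual argument. You propose a Combinatorial Nullstellensatz / Alon--Tarsi route (encode rainbow selections as monomials, show a target coefficient is nonzero via a permanent or sign-balanced counting), and you yourself flag the ``decisive step''---non-vanishing of that coefficient---as the main obstacle and leave it entirely unresolved. That is exactly the content of the theorem, so nothing has been proven. The fallback topological route fares no better: you would need a sharp lower bound on the connectivity of the matching complex of an $r$-uniform hypergraph, which you again identify as ``essentially the same'' missing content without supplying it. For comparison, what \cite{cst} actually does is quite different in mechanism from Nullstellensatz: they introduce the \emph{flattening rank} of a tensor (the maximum, over bipartitions of the tensor factors, of the rank of the resulting matrix flattening), associate to the family of matchings a tensor whose flattening rank they bound from below, and show that a small rainbow matching number would force a low-rank decomposition contradicting that bound. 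This is a rank comparison argument, not a coefficient extraction; there is no permanent and no sign cancellation to control. So beyond the incompleteness, the proposed approach is not a blind reconstruction of the actual proof, and there is no evidence it could be pushed through---the asymmetry of the $r$-uniform ``local factor'' that you note is precisely where a Nullstellensatz attempt breaks down, and the flattening-rank method is designed to sidestep that difficulty.
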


Other conjectures in the spirit of Conjecture \ref{abab} are: 
\begin{conjecture}
\label{ababa}\hfill
\begin{enumerate}
   \item
$(2n,n)\to^\cg n$. If $n$ is odd then $(2n-1,n)\to^\cg n$.
\item
$(n,n+1)\to^\B n$.
\item 
$(n,n+2) \to^\cg n$.

\end{enumerate}
\end{conjecture}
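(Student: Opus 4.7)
The plan is to approach these conjectures through the topological methods of Aharoni--Berger--Ziv, which relate rainbow matchings to the connectivity $\eta(\M(G))$ of the matching complex. Recall the template: if $G_1,\ldots,G_k$ share a vertex set and $\eta(\M(G_i)) \geq k$ for all $i$, then $E(G_1),\ldots,E(G_k)$ admit a rainbow matching of size $k$. The topological Hall theorem gives the sharp bound $\eta(\M(G)) \geq \nu(G) - 1$ when $G$ is bipartite (which is the source of Drisko's Theorem \ref{drisko}), but only $\eta(\M(G)) \geq \lceil (\nu(G)-1)/2 \rceil$ for general graphs, which explains why roughly twice as many matchings are needed in the non-bipartite setting.

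For part (1), I would first attempt to sharpen the general-graph connectivity estimate to $\lfloor \nu(G)/2\rfloor$ with a parity bonus when $\nu(G)$ is odd. Combined with the topological Hall theorem applied to $2n$ matchings of size $n$, this would yield a rainbow matching of size $n$, with the odd-$n$ case saving one matching. For parts (2) and (3), the idea is to convert ``excess matching size'' into extra connectivity: an excess of $|M_i|-n = s$ edges should boost $\eta$ by at least $s$. In the bipartite case with $|M_i|=n+1$, a single extra edge per matching paired with the sharp bipartite bound $\eta(\M(G)) \geq \nu(G)-1$ plausibly closes the deficit of $n-1$ matchings; the general-graph case with $|M_i|=n+2$ compensates for the factor-$2$ loss in an analogous way.

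The main obstacle is that these connectivity bounds are already tight on the very extremal configurations (disjoint triangles for general graphs, and the Drisko-type ``twisted'' examples for bipartite) that obstruct naive direct arguments, so a purely topological attack cannot close the gap. One must instead exploit the structure of a minimum counterexample: a maximum rainbow matching $R$ of size $n-1$, together with the given family, ought to produce an alternating augmenting path or cycle of a specified parity, and the whole difficulty lies in forcing such a structure to exist and have the right type (path vs.\ even cycle vs.\ odd cycle). Parity is precisely what distinguishes $(2n,n)$ from $(2n-1,n)$ for odd $n$, and what distinguishes the bipartite slack of $+1$ from the general-graph slack of $+2$. I expect that a cooperative or weighted refinement, in the spirit of Sections \ref{coopsection} and \ref{sec:weighted}, combined with a careful case analysis of the alternating structure in a minimum counterexample, will be needed to complete the argument.
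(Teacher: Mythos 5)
The statement you were asked to address is Conjecture~\ref{ababa}, which the paper poses as an open problem and does not prove; there is no paper proof against which to compare. Your submission is, by your own account, a research sketch rather than a proof: you propose sharpening the connectivity bound for matching complexes of general graphs and converting excess matching size into extra $\eta$, but then immediately observe that the known topological bounds are tight on the extremal configurations, so the topological machinery alone ``cannot close the gap,'' and you defer the remaining work to an unspecified analysis of alternating structures in a minimal counterexample. That is an honest assessment of the state of the art, but it is not a proof of any of the three parts, and you should not present it as an attempt at one.

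A few technical cautions on the sketch itself. First, be careful with the exact form of the topological inequalities you invoke: the bound you state, $\eta(\M(G)) \geq \nu(G) - 1$ for bipartite $G$, is not how Drisko's theorem is usually obtained; the relevant route is via the Kalai--Meshulam theorem (Theorem~\ref{km}) applied to the complex $\{F : \nu(F) \leq n-1\}$ and a bound on its Leray number (equivalently, a collapsibility bound), and the loss of a factor of two for general graphs enters there. Second, the ``excess matching size boosts $\eta$ by $s$'' heuristic for parts (2) and (3) is the crux of the difficulty, and the paper's own example showing $(3,4) \not\to^\G 3$ (three matchings of size $4$ on eight vertices) demonstrates that the bipartite and general cases really do behave differently in a way that any candidate boost lemma would have to respect; your sketch does not engage with this example or explain why your proposed boost would not overshoot and falsely prove $(n,n+1)\to^\G n$. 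Finally, the observation in the paper that part (2) implies part (1) (via the doubling trick) and that Conjecture~\ref{abab} implies part (1) gives a cleaner reduction than attacking part (1) directly, and you may want to incorporate that structure into any future attempt rather than treating the three parts as independent.
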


An example showing that $(n.n+1)\not \rightarrow^\cg n$ is 
given by three matchings $M_i,~i\le 3$ on a set $\{a,b,c,d,a',b',c',d'\}$, where $M_1=\{ab,cd,a'b',c'd'\}$, $M_2= \{ac,bd,a'c',b'd'\}$ and  $M_3= \{ad,bc,a'd',b'c'\}$. However, we do not know of any such example for larger $n$.

 Pokrovskiy \cite{pok} proved that  $(n,n+o(n)) \to ^\B n$ and Rao, Ramadurai, Wanless and Wormald \cite{grww} gave better asymptotics in the case of simple graphs. Keevash and Yepremyan  \cite{ky} proved that $n$ matchings of size $n+o(n)$, repeating each edge at most $o(n)$ times, 
have a rainbow matching of size $n-\text{const}$.
It is not hard to show that $(n,n) \to^\cb n-o(n)$, 
while the same statement for general graphs is still open, see  Section \ref{sectiongeneralgraphs} below.

Interestingly, part (2) of  Conjecture \ref{ababa} implies Theorem \ref{drisko} (of course, the other direction would be more desirable). Given $2n-1$ matchings of size $n$ in a bipartite graph, double each of them by joining it to its copy on a disjoint vertex set $V'$. We now have at hand $2n-1$ matchings of size $2n$, so assuming  $(2n-1,2n)\to ^\B 2n-1$, we get a rainbow matching of size $2n-1$, of which by the pigeonhole principle $n$ edges must be  in the same copy  - $V$ or $V'$. Similarly, Conjecture \ref{abab} implies part (1) of the conjecture.

For bipartite graphs, (2) implies (1). If true, (1)  is a strengthening of Conjecture \ref{brs}. To see this, note that for every $i \in [n]$ the set of entries that are equal to $i$ forms a matching between the rows and the columns.

For  a non-decreasing sequence $\sigma=(a_1, \ldots ,a_k)$  of natural numbers write $\sigma \to n$ if for every set of  families $F_i$ of matchings in a bipartite graph, where $|F_i|=a_i$, there exists a rainbow matching of size $n$. We say then that $\sigma$ is $n$-{\em coercive}. So, Conjecture \ref{abab} is that $(n+1, \ldots ,n+1) \to n$ ($(n+1)$-fold repetition). We do not even know a counterexample to $(n,\ldots n,, n+1, \dots, n+1)\to
n$ ($n,n+1$ respectively repeated $\ceil{n/2},\floor{n/2}$ times). 

In \cite{abkz} the following  strengthening of  Theorem \ref{drisko} was proved:

\begin{theorem}\label{stairs}
 $(1, 2, \ldots n,n,\ldots ,n)\to n$ ($n$ repeated $n$ times).
\end{theorem}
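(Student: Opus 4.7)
The natural plan is induction on $n$, extending the standard augmenting-path proof of Drisko's theorem (Theorem \ref{drisko}) to accommodate the deficient families. The base case $n=1$ is trivial: a single edge from $F_1$ is a rainbow matching of size $1$. In the inductive step, given families $F_1, \ldots, F_{2n-1}$ in a bipartite graph $G$ with $|F_i|=\min(i,n)$, I suppose for contradiction that no rainbow matching of size $n$ exists, and let $R$ be a maximum rainbow matching, of size $k \le n-1$, using the families indexed by $J \subseteq [2n-1]$. Set $K = [2n-1]\setminus J$, so $|K| \ge n$.

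The heart of the proof is to exhibit some $i \in K$ admitting a \emph{rainbow augmenting path}. Since $R$ is maximum, every edge of each $F_i$ with $i \in K$ meets $V(R)$. I would consider the symmetric difference $F_i \triangle R$ and search for an alternating path $P$ whose two endpoints lie outside $V(R)$; swapping $R \cap P$ for $F_i \cap P$ would then yield a strictly larger matching. To maintain the rainbow property along the swap, one must consistently reassign family labels: replacing an $R$-edge from $F_\ell$ with an $F_{\ell'}$-edge is only allowed when $\ell' \in K$ and $F_{\ell'}$ has a compatible edge. The staircase profile $1,2,\ldots,n-1,n,\ldots,n$ is designed precisely so that a weighted pigeonhole forces such a swap sequence to close: the total ``augmentation budget'' $1+2+\cdots+(n-1)+n\cdot n$ exceeds the number of ``blocking constraints'' imposed by the $2(n-1)$ vertices of $V(R)$, so some family in $K$ must supply an augmentation.

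An alternative worth trying is a direct reduction to Theorem \ref{drisko}: introduce $n-1$ fresh vertex-disjoint edges $e_1,\ldots,e_{n-1}$ outside $G$, and replace each deficient $F_i$ for $i<n$ by $F_i' := F_i \cup \{e_i,e_{i+1},\ldots,e_{n-1}\}$ (padding it to size $n$), leaving the large families unchanged. Applying Drisko to $(F_1',\ldots,F_{2n-1}')$ produces a size-$n$ rainbow matching $R'$ in the augmented bipartite graph; the task is then to argue by local exchanges that $R'$ can be taken to use no auxiliary edge. This reduction is attractive but has a genuine obstacle: some instances may force auxiliary edges into $R'$, so the purging step itself requires a separate swap argument.

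The principal obstacle in either route is the same: maintaining the rainbow property across a cascade of swaps. In Drisko's uniform setting, all families have interchangeable sizes, so swaps can be performed freely; with the staircase, a swap into a small family $F_\ell$ is restricted by $|F_\ell| = \ell$, and one must verify that a chain of swaps never exhausts the available edges of low-index families. I expect the resolution to hinge on a careful ordering principle—for instance, performing swaps that demand the largest free families first, so that the scarce deficient families are only invoked as a last resort—combined with the numerical coincidence that the triangular deficit $\binom{n}{2}$ is exactly offset by the surplus of $n$ full-size families beyond the Drisko threshold $2n-1-(n-1)=n$.
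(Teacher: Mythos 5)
Your proposal is not a proof: it is a sketch of two candidate strategies, and in both cases you explicitly acknowledge that a key step remains unresolved. The paper itself states, immediately after the theorem, that ``the only known proof is topological,'' referring to the proof in the cited reference \cite{abkz}, which goes through Topological Hall / the Kalai--Meshulam theorem applied to the matching complex of a bipartite graph rather than through any augmenting-path cascade. Your entirely combinatorial plan is therefore not a different route to the same destination; it is an attempt at a route that, as far as the literature knows, has not been made to work.

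Concretely, the gaps are the following. In the augmenting-path approach, the ``weighted pigeonhole'' is asserted but never executed: you claim the budget $1+2+\cdots+(n-1)+n\cdot n$ ``exceeds the number of blocking constraints imposed by the $2(n-1)$ vertices of $V(R)$,'' but you do not specify what quantity is being counted, why each vertex of $V(R)$ blocks at most some fixed amount, or how the cascade of colour reassignments terminates without revisiting a family of $K$ already used. The step ``replacing an $R$-edge from $F_\ell$ with an $F_{\ell'}$-edge is only allowed when $\ell' \in K$'' is not justified to be achievable globally, and in the non-uniform setting the interchange freedom you invoke from Drisko's proof genuinely breaks (a deficient $F_\ell$ with $|F_\ell|=\ell$ can fail to provide the exchange edge needed at the next stage; your ``ordering principle'' for which swap to do first is a hope, not an argument). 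In the padding reduction, you correctly note that Drisko applied to the padded families may put auxiliary edges in $R'$, and then the ``purging by local exchanges'' is precisely the unresolved combinatorial content of the theorem, so nothing has been reduced. Your closing numerical claim --- that the triangular deficit $\binom{n}{2}$ is ``exactly offset by the surplus of $n$ full-size families beyond the Drisko threshold'' --- is not a coherent count: both Drisko and the staircase use $2n-1$ families, so there is no surplus of families, only a deficit of $\binom{n}{2}$ edges, and that deficit is what makes the statement strictly stronger and harder, not easier. The correct proof in \cite{abkz} establishes a lower bound on the topological connectivity $\eta$ of the relevant complex and then invokes Topological Hall (Theorem \ref{matcomp}); none of that machinery appears in, or is replaced by, your sketch.
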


The only known proof is topological. This sequence is {\em minimal $n$-coercing}, in the following sense:  reducing any term makes the sequence non-$n$-coercing. Another example of a minimal $n$-coercing sequence is $(1,3,\dots,2n-3,2n-1)$. 

\begin{conjecture}\label{bold}\hfill
\begin{enumerate}
    \item If $\sigma$ is minimal $n$-coercing then its length is at most $2n-1$. 
    
    \item An $n$-coercing sequence contains an $n$- coercing subsequence  of length at most $2n-1$.

     \item If $\sigma$ is not $n$-coercing then there is a set of matchings witnessing this, all contained in a disjoint union of  cycles. 
     
     \item 

If $(a_1. \ldots ,a_k) \to n$ then $\sum_{i=1}^k a_i \ge n^2$.
\end{enumerate}
\end{conjecture}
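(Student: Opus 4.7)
The plan is to treat the four parts as a single package, with the structural reduction (3) as the linchpin for the numerical bounds (1), (2), (4); the sequence $(1,3,5,\ldots,2n-1)$ mentioned just before the conjecture already shows (4) is tight.

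I would tackle (3) first. Take any witness $M_1,\ldots,M_k$ for $\sigma \not\to n$ in a bipartite graph $G$, pass to the union graph $G = \bigcup_i M_i$, and choose the witness extremally (for instance, minimizing $|V(G)|$ first and then $|E(G)|$) subject to the $M_i$ having the prescribed sizes $a_i$ and hosting no rainbow matching of size $n$. The goal is to show that every component of such an extremal $G$ is an even cycle. Any vertex of color-degree at least $3$, or any branch vertex of the underlying simple graph, should be eliminable: delete one offending edge from some $M_i$ and restore $|M_i|=a_i$ via an augmenting-path exchange in the style of the proof of Theorem \ref{drisko}; the exchange must either produce a rainbow matching of size $n$ (contradicting our choice of $G$ as a witness) or yield a strictly smaller counterexample (contradicting extremality). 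Path components are then closed into even cycles by identifying their endpoints, after a preparatory color-exchange along the path to ensure that no $M_i$ saturates both endpoints, which would otherwise shrink $|M_i|$ upon identification.

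With (3) in hand, the proofs of (1) and (2) become accounting arguments on disjoint unions of even cycles. A single $2\ell$-cycle accommodates at most $2\ell-2$ distinct colors before a rainbow matching of size $\ell$ is forced, by the tight Drisko example on that cycle. Summing such bounds across the components of a hypothetical length-$2n$ minimal $n$-coercing sequence should expose a redundant color, proving (1). Part (2) is then immediate: every $n$-coercing $\sigma$ dominates (entrywise) some minimal $n$-coercing subsequence, which by (1) has length at most $2n-1$. For (4), I would induct on $n$: assume $a_1 \leq \ldots \leq a_k$ and note $k \geq n$. Either $(a_2,\ldots,a_k) \to n$, in which case induction gives $\sum_{i \geq 2} a_i \geq n^2$ immediately. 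Otherwise there is an adversarial bipartite graph $G$ and matchings $M_2,\ldots,M_k$ without a rainbow matching of size $n$; since $(a_1,\ldots,a_k) \to n$, pinning an arbitrary edge $e$ as $M_1$ must extend to a rainbow matching of size $n$, so the reduced matchings on $G \setminus N(e)$ supply a rainbow matching of size $n-1$ for every edge $e$. This forces the reduced sizes $(a_2 - 2,\ldots,a_k - 2)$ to be $(n-1)$-coercing (this universality step is the hand-wavy crux of (4)), whence by induction $\sum_{i \geq 2}(a_i - 2) \geq (n-1)^2$, giving $\sum a_i \geq (n-1)^2 + 2(k-1) + a_1 \geq n^2$.

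The principal obstacle across all four parts is (3): the surgeries (augmenting-path swaps, edge deletions, endpoint identifications) must simultaneously preserve the exact sizes $|M_i|=a_i$ and the absence of a rainbow matching of size $n$. Managing this delicate bookkeeping, especially at branch vertices where alternating-path analysis is most intricate, is the technical heart; without (3) the numerical bounds each require distinct and ad hoc constructions, which is why the whole program rests on the cycle reduction. A secondary obstacle, specific to (4), is making the ``universality'' step in the induction rigorous, since the reduced problem lives on a particular restricted graph $G \setminus N(e)$ rather than all bipartite graphs, so one needs to show that the adversarial structure forcing failure of $(a_2,\ldots,a_k) \to n$ is in some sense canonical—again a task for which (3) provides exactly the right vocabulary.
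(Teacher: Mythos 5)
This is a \emph{conjecture}, not a theorem: the paper states Conjecture~\ref{bold} without proof and offers no argument for any of its four parts, only examples (the tight sequence $(1,3,\dots,2n-1)$, and the $(2,4,4)$ example showing part~(3) cannot be strengthened to a single cycle). So there is no ``paper's proof'' to compare against, and a complete argument here would be a genuine new result, not a reconstruction.

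Your sketch does not close the conjecture, and the two places you flag as the ``technical heart'' are in fact where the argument breaks down rather than merely becomes delicate. For part~(3), the extremal-witness surgery is not well-posed: after deleting an edge from a vertex of colour-degree $\geq 3$ and restoring $|M_i|=a_i$ by an alternating-path exchange, there is no reason the result is either a rainbow matching of size $n$ or a counterexample smaller in your order $(|V(G)|, |E(G)|)$; the exchange can leave both quantities unchanged, or even increase $|E(G)|$, so the dichotomy you invoke has no justification and the process need not terminate. The path-closing step is similarly unjustified: identifying the endpoints of a path component can create a rainbow matching (an odd-length alternating path becomes a perfect matching of the resulting cycle), so it is not a valid reduction. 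For part~(4), restricting the $M_i$ to $G \setminus N(e)$ produces matchings of sizes anywhere between $a_i$ and $a_i-2$ depending on how $e$ meets each $M_i$, and the fact that they avoid a rainbow matching of size $n-1$ in \emph{one} fixed graph says nothing about $(a_2-2,\dots,a_k-2)$ being $(n-1)$-coercing, which is a universal statement over all bipartite graphs and all choices of matchings. You call this the ``hand-wavy crux'', but it is not a gap to be filled with more care; it is a logically invalid inference (universal from existential), and one would need an entirely different idea to make the induction go through. In short: treating (3) as the linchpin is a reasonable strategic guess, but neither (3) nor the derivation of (4) from it is currently an argument, and the paper itself presents all four parts as open.
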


The sequence $(2,4,4)$ is not 3-coercing, and the only example witnessing this is a disjoint union of two cycles of length $4$, so part (3) of the conjecture cannot be strenthened to ``one cycle''.  Conjecture \ref{abab}  is easily seen to be true in cycles.  

\section{Two dual,  topologically-formulated,   theorems }

The {\em rank} of a complex is the largest size of an edge. 
A complex of rank $k$ has a unique (up to homeomorphism) geometric realization in $\mathbb{R}^{2k-1}$ - think of the realization of a graph ($k=2$) in $\mathbb{R}^3$.
A  parameter connecting topology and combinatorics is {\em (homotopic) connectivity}, which is the minimal dimension of a hole in the geometric realization. Its homological version is the minimal index of a non-zero homology group, plus 1. The two are not identical - homological connectivity is at least as large as homotopic connectivity. The homotopic connectivity is denoted by $\eta(\C)$ ($\eta=\infty$ if there are no holes). 
 Rainbow problems and marriage problems  give rise to two mirror requirements on the holes. In the first, it is beneficial (namely contributing to the existence of rainbow sets) not to have small dimensional holes. In the second, it is useful not to have large dimensional holes. We denote by $\lambda(\C)$ the largest dimension of a hole ($0$, if there is no hole), and by $\blambda(\C)$ the maximum of $\lambda(\C[S])$ over all subsets $S$ of $V(\C)$. 
 It is called the ``Lerayness'' of $\C$.
\begin{example}
Let $S^n$ be (a triangulation of) the $n$-dimensional sphere, and $B^n$ (a triangulation of) the $n$-dimensional ball. Then  $\eta(S^n)=\lambda(S^n)=n+1, ~\eta(B^n)=\infty$.
\end{example}

\begin{remark}
 It can be shown that $\eta$ is the minimal dimension of an empty {\em sphere}. By contrast, $\lambda$ is not necessarily the largest dimension of an empty sphere - the ``holes'' can be more general. For example, for a torus $\lambda=3$, while $\eta=2$, since there is an empty $S^1$. 
\end{remark}

The two parameters are connected via an operation called ``Alexander duality''. The Alexander dual $D(\C)$ of a complex $\C$ is $\{A^c \mid A \not \in \C\}$. 

\begin{theorem}[e.g. \cite{bt}]\label{alexander}
$\blambda(\C) =|V(\C)|-\eta(D(C))-1$. 
\end{theorem}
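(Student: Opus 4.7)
The plan is to deduce the identity from combinatorial Alexander duality together with the observation that Alexander duals of induced subcomplexes of $\C$ are precisely the links in $D(\C)$. As a starting point I would invoke the combinatorial Alexander duality theorem: for any simplicial complex $K$ on $N$ vertices,
\[
\tilde H_i(K) \;\cong\; \tilde H^{N-i-3}(D(K)) \qquad \text{for every } i.
\]
This admits a short chain-level proof (as in Bj\"orner--Tancer \cite{bt}), or it can be obtained by embedding $K$ into the boundary of the $N$-simplex (a copy of $S^{N-2}$) and invoking the classical topological Alexander duality theorem.

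Next, I would establish the link--dual correspondence. For each $\sigma \subseteq V$, set $S := V \setminus \sigma$. A subset $\tau \subseteq S$ lies in the Alexander dual of $\C[S]$ (computed within the vertex set $S$) iff $S \setminus \tau \notin \C$ iff $\tau \cup \sigma \in D(\C)$ iff $\tau \in \mathrm{lk}_{D(\C)}(\sigma)$. Hence $D(\C[S]) = \mathrm{lk}_{D(\C)}(\sigma)$, and combining this with Alexander duality applied to the $|S|$-vertex complex $\C[S]$ yields
\[
\tilde H_i(\C[S]) \;\cong\; \tilde H^{|S| - i - 3}\bigl(\mathrm{lk}_{D(\C)}(\sigma)\bigr) \qquad (\star)
\]
for every $i$ and every $\sigma \subseteq V$.

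From $(\star)$ both inequalities of the stated identity follow. For the direction $\blambda(\C) \geq |V| - \eta(D(\C)) - 1$, specialize to $\sigma = \emptyset$: a lowest-dimensional non-vanishing homology class of $D(\C)$, sitting in dimension $\eta(D(\C)) - 1$, corresponds under the duality to a non-vanishing cohomology class of $\C[V]$ in dimension $|V| - \eta(D(\C)) - 2$, producing a hole of $\C$ at the required dimension. For the reverse inequality, pick $S$ realizing $\blambda(\C)$; the isomorphism $(\star)$ places a non-trivial class in $\mathrm{lk}_{D(\C)}(V \setminus S)$ at the relevant low cohomological degree, and an inductive Mayer--Vietoris argument on $|\sigma|$, peeling off one vertex of $\sigma$ at a time via the standard star--deletion decomposition of $D(\C)$ (and using that the star of a face is contractible), transports this class up to a non-trivial class of $D(\C)$ itself in dimension $|V| - \blambda(\C) - 2$, forcing $\eta(D(\C)) \leq |V| - \blambda(\C) - 1$.

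The main obstacle is the discrepancy between homotopic and homological connectivity: Alexander duality is an inherently homological statement, whereas $\eta$ in the paper is defined homotopically. The argument outlined above yields the homological version of the identity directly; passing to the homotopic $\eta$ requires supplementary Hurewicz-type arguments to upgrade the homological obstructions produced by the duality into genuine homotopical ones, exploiting the fact that the relevant connectivity range typically lies above the simply-connected threshold.
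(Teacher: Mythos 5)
The paper does not prove Theorem~\ref{alexander}; it is quoted as a known fact with the citation ``e.g.~\cite{bt}'', so there is no internal argument to compare against, only the soundness of your proposal. Your two opening ingredients are the right ones and are carried out correctly: the combinatorial Alexander duality isomorphism $\tilde H_i(K)\cong\tilde H^{|V|-i-3}(D(K))$, and the identification $D(\C[S])=\mathrm{lk}_{D(\C)}(V\setminus S)$ (Alexander dual taken inside the ground set $S$), which together give your $(\star)$.

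The gap is in the transport step for the inequality $\eta(D(\C))\le|V|-\blambda(\C)-1$. The star--deletion Mayer--Vietoris sequence
\[
\cdots \to \tilde H_{j+1}(K)\to \tilde H_j(\mathrm{lk}_K(v))\to \tilde H_j(\mathrm{del}_K(v))\to\cdots
\]
(the star term vanishes) only tells you that a nontrivial class in $\tilde H_j(\mathrm{lk}_K(v))$ forces a nontrivial class in \emph{either} $\tilde H_{j+1}(K)$ \emph{or} $\tilde H_j(\mathrm{del}_K(v))$; the class can leak entirely into the deletion and never reach $K=D(\C)$ itself. The point is visible already when $D(\C)$ is a cone $v_0*L$: then $D(\C)$ is contractible so $\eta(D(\C))=\infty$, yet $\mathrm{lk}_{D(\C)}(v_0)=L$ may carry nontrivial homology that cannot be pushed up. Concretely, take $\C$ to be the star $K_{1,3}$ on $V=\{v_0,v_1,v_2,v_3\}$; then one checks $D(\C)=\C$, which is contractible so $\eta(D(\C))=\infty$, while $\C[\{v_1,v_2,v_3\}]$ is three isolated points and $\blambda(\C)=1$. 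What Alexander duality applied only at $\sigma=\emptyset$ actually yields is a relation between $\lambda(\C)$ and the \emph{homological} connectivity of $D(\C)$; recovering a statement about $\blambda(\C)$ requires keeping track of the connectivity of all links $\mathrm{lk}_{D(\C)}(\sigma)$, $\sigma\in D(\C)$, not just of $D(\C)$ itself, and your Mayer--Vietoris argument does not establish the needed implication between the two. Your closing remark about Hurewicz is also not a repair: the dimensions at issue include $0$ and $1$, precisely where $\pi_1$ obstructions live, so there is no simply-connected range to exploit.
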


The connection between these notions and choice functions is given by two theorems. One is ``Topological Hall" (implicit in \cite{ah}, first stated explicitly, quoting the first author,  in \cite{meshulam2}), whose matroidal form is:

\begin{theorem}\label{matcomp}\cite{matcomp}
 If $\eta(\C[S]) \ge rank(\cm.S)$ for every $S \subseteq V$ then there exists a $(\cm, \cs)$-marriage.
\end{theorem}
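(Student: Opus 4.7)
The plan is to follow the Aharoni--Haxell topological approach to Hall-type theorems, adapted to the matroidal setting, and argue by induction on $\rank(\cm)$. The base case $\rank(\cm)=0$ is vacuous, since the empty set is spanning in $\cm$ and belongs to any complex.

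For the inductive step, I would attempt to find an element $v \in V$ such that, after either including $v$ in the marriage or discarding it, the resulting smaller pair still satisfies the hypothesis. Using the decomposition
\[
\C = \C[V\setminus\{v\}] \cup \mathrm{star}_\C(v),
\]
with intersection $\mathrm{lk}_\C(v)$, the aim is to reduce to either $(\cm\setminus v,\, \C[V\setminus\{v\}])$ (whose inductive marriage is already a marriage of $(\cm,\C)$, provided $v$ is chosen as a non-coloop so that $\rank(\cm\setminus v) = \rank(\cm)$) or $(\cm/v,\, \mathrm{lk}_\C(v))$ (in which case adjoining $v$ to the inductive marriage yields what we want, since $\rank(\cm/v)=\rank(\cm)-1$). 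In both subcases, the condition on other subsets $S\subseteq V\setminus\{v\}$ would be verified by comparing $\rank((\cm\setminus v).S)$ and $\rank((\cm/v).S)$ to $\rank(\cm.S)$ or $\rank(\cm.(S\cup\{v\}))$ via standard contraction/deletion identities.

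The main technical obstacle is that the natural Mayer--Vietoris-style inequality
\[
\eta(\C) \;\geq\; \min\bigl(\eta(\C[V\setminus\{v\}]),\; \eta(\mathrm{lk}_\C(v))+1\bigr)
\]
runs the wrong direction --- it lower-bounds $\eta(\C)$ from the pieces rather than the other way around. To bypass this I would proceed by contradiction, through a minimality principle in the style of Meshulam: assume a counterexample minimizing $|V|$, and then pick a minimal subset $S_0\subseteq V$ witnessing the failure. The minimality of $S_0$ lets the topological hypothesis descend to proper subsets of $S_0$, and applying Mayer--Vietoris \emph{inside} $S_0$ with respect to a cleverly chosen $v\in S_0$ --- combined with the submodularity of $\rank(\cm.\cdot)$ --- should force a contradiction. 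The delicate step is balancing the drop in $\rank(\cm.S)$ under deletion and contraction against the drop in $\eta$ under passage to the link; matching these precisely via matroid submodularity and a careful choice of $v$ (perhaps via an element belonging to a tight rank-attaining flat of $\cm.S_0$) is where I expect the bulk of the technical work to lie.
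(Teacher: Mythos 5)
This theorem is quoted in the paper from \cite{matcomp} without proof, so there is no ``paper's proof'' to compare against directly; but there are two concrete problems with your sketch that you should know about.

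First, the deletion branch of your deletion/contraction dichotomy does not preserve the hypothesis. Writing $r$ for the rank function of $\cm$, one has $\rank(\cm.S) = r(V) - r(V\setminus S)$, while for a non-coloop $v$ and $S\subseteq V\setminus\{v\}$ one has $\rank\bigl((\cm\setminus v).S\bigr) = r(V) - r\bigl((V\setminus S)\setminus\{v\}\bigr)$. Since $(V\setminus S)\setminus\{v\} \subseteq V\setminus S$, the latter rank is at least the former, and it is strictly larger precisely when $v$ is a coloop of the restriction $\cm|(V\setminus S)$. So deleting $v$ can raise the required lower bound on $\eta(\C[S])$ by one for some $S$, and the hypothesis $\eta(\C[S])\ge\rank(\cm.S)$ gives you nothing about $\eta(\C[S])\ge\rank\bigl((\cm\setminus v).S\bigr)$. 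This is not a book-keeping detail to be ``verified via standard contraction/deletion identities''; it is the step where the inheritance breaks. (Contraction behaves well: a short computation shows $\rank\bigl((\cm/v).S\bigr) = \rank(\cm.S)$ for $S\subseteq V\setminus\{v\}$, but then you need the topological hypothesis for $\mathrm{lk}_\C(v)$ rather than $\C$, which is exactly the second problem.)

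Second, the Mayer--Vietoris obstacle you name is the real one, and your proposed escape route is too vague to count as an argument. The inequality $\eta(\C)\ge\min\bigl(\eta(\C[V\setminus\{v\}]),\,\eta(\mathrm{lk}_\C(v))+1\bigr)$ bounds the whole from the parts; your induction needs the converse, namely that high connectivity of $\C[S]$ forces high connectivity of $\C[S\setminus\{v\}]$ or of $\mathrm{lk}_\C(v)[S\setminus\{v\}]$, and that simply is not a theorem. ``Pick a minimal bad $S_0$, use submodularity, choose $v$ cleverly'' identifies where the work would have to happen but does not do it, and in fact there is no clean choice of $v$ that makes both branches close. The known proofs do not go this way: the partition-matroid case (topological Hall) is proved in the Aharoni--Haxell/Meshulam line by a global nerve/domination argument rather than vertex-by-vertex induction, and the matroidal generalization in \cite{matcomp} is obtained by reducing to that case and to Rado-type arguments, not by a self-contained deletion/contraction recursion on $(\cm,\C)$. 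If you want to pursue an inductive proof, you would at minimum need a substitute lemma relating $\eta(\C[S])$ to the pair $\bigl(\eta(\C[S\setminus\{v\}]),\eta(\mathrm{lk}_\C(v)[S\setminus\{v\}])\bigr)$ in the needed direction for at least one well-chosen $v$, and no such lemma is on the table.
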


Here $\cm.S$ is the contraction of $\cm$ to $S$. ``Topological hall'' is the case in which $\cm$ is a partition matroid. (The disjointness of the parts in the partition matroid is attained by duplicating vertices, if necessary.)

 The fundamental topological theorem regarding the second type of choice functions is that of Kalai-Meshulam, a special case of which is: 

\begin{theorem}[Kalai--Meshulam \cite{km}]\label{kmleray}\label{km}
    If $\blambda(\C) \le d$  then every $d+1$ sets in $\C^c:=2^V \setminus \C$ have a rainbow set belonging to $\C^c$.
\end{theorem}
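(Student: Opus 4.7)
The two technical tools developed in this section -- Alexander duality (Theorem \ref{alexander}) and the matroidal Topological Hall theorem (Theorem \ref{matcomp}) -- are the natural ingredients for Theorem \ref{kmleray}, and I would combine them as follows.

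\textbf{Translation via Alexander duality.} Rephrase the problem in the Alexander dual $D(\C)=\{A^c : A\notin\C\}$. By Theorem \ref{alexander}, the hypothesis $\blambda(\C)\le d$ is equivalent to the connectivity lower bound $\eta(D(\C))\ge |V|-d-1$. The given sets $A_i\in\C^c$ correspond to faces $B_i:=A_i^c\in D(\C)$, and a sought-after rainbow non-face $R=\{x_1,\ldots,x_{d+1}\}$ of $\C$ corresponds, dually, to a face $R^c\in D(\C)$ whose complement in $V$ is a rainbow transversal of $(A_i)_{i=1}^{d+1}$.

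\textbf{Topological Hall on the dual.} Let $\cm$ be the partition matroid on $V$ with parts $A_1,\ldots,A_{d+1}$ (duplicating overlapping elements as indicated after Theorem \ref{matcomp}), and let $\cm^*$ be its matroid dual. A basis of $\cm^*$ is exactly the complement of a rainbow transversal of $(A_i)$, so producing the desired $R^c$ amounts to finding an $(\cm^*,D(\C))$-marriage. By Theorem \ref{matcomp} such a marriage exists provided
\[
\eta\bigl(D(\C)[S]\bigr)\;\ge\;\rank(\cm^*.S)\qquad\text{for every }S\subseteq V.
\]
Applying Alexander duality inside $S$ to the contraction $\C/(V\setminus S)$ rewrites the left-hand side as $|S|-\blambda(\C/(V\setminus S))-1$, while a direct computation gives $\rank(\cm^*.S)=|S|-|\{i:A_i\cap S\ne\emptyset\}|$; hence the condition collapses to a family of \emph{local} Lerayness bounds, whose $S=V$ instance is precisely our hypothesis $\blambda(\C)\le d$.

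\textbf{Main obstacle.} The real work is establishing the local Lerayness bound
\[
\blambda\bigl(\C/(V\setminus S)\bigr)\;\le\;\bigl|\{i:A_i\cap S\ne\emptyset\}\bigr|-1
\]
for every $S\subsetneq V$. The global bound $\blambda(\C)\le d$ alone does not give this, since $d+1$ may exceed the number of $A_i$'s meeting $S$; so one must genuinely use that the $A_i$ are non-faces. A clean way to do this is to first reduce to the case where each $A_i$ is a minimal non-face (legal because $\C^c$ is upward closed), so that every $B_i=A_i^c$ is maximal in $D(\C)$ and any ``missing'' spanning face of $\cm^*$ in $D(\C)$ would be forced to contain some $A_i$; the rainbow non-face then appears directly. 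Translating this combinatorial observation into the Mayer--Vietoris / contraction calculus needed to verify the displayed inequality -- in particular handling the degenerate case $V\setminus S\notin\C$, where the contraction collapses and the bound is trivially met -- is the technical heart of the argument.
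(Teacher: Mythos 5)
Your framework---rephrasing via Alexander duality and then invoking the matroidal Topological Hall theorem on the dual complex $D(\C)$ with the dual partition matroid $\cm^*$---is exactly the derivation the paper alludes to after stating the theorem, and your rank computation $\rank(\cm^*.S)=|S|-|\{i:A_i\cap S\ne\emptyset\}|$ is correct.

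However, the ``main obstacle'' you identify is not a real obstacle, and your proposal stalls precisely where the argument should close. Write $T=V\setminus S$. You already dispose of $T\notin\C$ (the induced dual subcomplex is the full simplex, so $\eta=\infty$). In the remaining case $T\in\C$, observe that \emph{every} $A_i$ must meet $S$: if $A_i\cap S=\emptyset$ then $A_i\subseteq T$, and since $\C$ is downward closed and $T\in\C$ this would force $A_i\in\C$, contradicting $A_i\in\C^c$. Hence $|\{i:A_i\cap S\ne\emptyset\}|=d+1$, and after translating $\eta(D(\C)[S])=|S|-\blambda(lk_\C(T))-1$ the required inequality reduces exactly to $\blambda(lk_\C(T))\le d$. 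This is the standard fact that the Leray number of a complex bounds the Leray numbers of all its links (links of induced subcomplexes of $\C$ are induced subcomplexes of links of $\C$, so the vanishing of $\tilde H_{\ge d}$ is inherited). No reduction to minimal non-faces, no Mayer--Vietoris bookkeeping, and no extra ``local Lerayness'' hypothesis beyond $\blambda(\C)\le d$ is needed; the non-face condition on the $A_i$ is used only in the single, one-line observation above. As written, your proposal correctly sets up the translation but does not finish the verification of the Topological Hall hypothesis, and the route you sketch for finishing it is both harder than necessary and not actually carried out.

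One small additional point to address if you flesh this out: the disjointification of the $A_i$ by duplicating vertices (needed to make $\cm$ a genuine partition matroid) must be checked not to increase $\blambda(\C)$; this is standard (cloning a vertex preserves $d$-Lerayness) but deserves a sentence.
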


As in Theorem \ref{drisko} and Conjecture \ref{abab}, the format is that of ``many sets not in $\C$ have a rainbow set not in $\C$''. Theorems \ref{matcomp} and \ref{km}
are derivable from one another, using Theorem \ref{alexander} and  noting that $A \in \cm \setminus \C$ if and only if $A^c$ is spanning in $\cm^*$ and belongs to $D(C)$. ($\cm^*$, the dual of $\cm$, consists of those sets whose complements span $\cm$).

 A classical theorem of this type is the B\'ar\'any-Lov\'asz colorful version of Caratheodory's theorem. For a set $V=\{\vec{v}_i, ~i \in I\}$ of vectors in $\mathbb{R}^d$ let  
$$cone(V)= \{\sum_{i \in I} \alpha_i \vec{v}_i, ~\alpha_i \ge 0 ~\text{for all } i \in I \}$$
and
$$conv(V)= \{\sum_{i \in I} \alpha_i \vec{v}_i, ~\alpha_i \ge 0 ~\text{for all } i \in I, ~\sum_i\alpha_i=1 \}$$

\begin{theorem}\label{barany}

[B\'ar\'any, \cite{barany}]
\begin{enumerate} 
\item
If $S_1, \ldots ,S_d$ are sets of vectors in $\mathbb{R}^d$ satisfying $\vec{v} \in cone(S_i)$ for all $i \le d$ then there exists a rainbow set $S$
such that $\vec{v} \in cone(S)$.

\item
If $S_1, \ldots ,S_{d+1}$ are sets of vectors in $\mathbb{R}^d$ satisfying $\vec{v} \in conv(S_i)$ for all $i \le d+1$ then there exists a rainbow set $S$
such that $\vec{v} \in conv(S)$.

\end{enumerate}
\end{theorem}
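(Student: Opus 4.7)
My plan is to prove (2) by the classical closest-point exchange argument and then to derive (1) by the same strategy with cones replacing convex hulls. As a preliminary, Carathéodory's theorem applied to each $S_i$ separately lets us assume every $S_i$ is finite (at most $d+1$ vectors in the convex case, $d$ in the cone case), so the set of full rainbow sets is finite. For (2), translate so that $\vec{v} = 0$, and then select a rainbow set $R = \{r_1, \dots, r_{d+1}\}$ (with $r_i \in S_i$) minimizing the Euclidean distance from $0$ to $\conv(R)$; call the realizing point $p$.

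Suppose for contradiction that $p \neq 0$. Then $0 \notin \conv(R)$, so $p$ lies on the boundary of the simplex $\conv(R)$, and hence (by Carathéodory applied inside $\conv(R)$) we have $p \in \conv(R \setminus \{r_j\})$ for some index $j$. The nearest-point condition gives $\langle x, p\rangle \ge \|p\|^2$ for every $x \in \conv(R)$, so the hyperplane $\{x : \langle x, p\rangle = \|p\|^2\}$ separates $\conv(R)$ from $0$. Since $0 \in \conv(S_j)$, some $q \in S_j$ must satisfy $\langle q, p\rangle < \|p\|^2$ (otherwise $\conv(S_j)$ would lie entirely in the halfspace $\langle \cdot, p\rangle \ge \|p\|^2$). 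Replacing $r_j$ by $q$ yields a rainbow set $R'$ with $p, q \in \conv(R')$, and differentiating $\|(1-t)p + tq\|^2$ at $t = 0$ gives $2(\langle p, q\rangle - \|p\|^2) < 0$; hence $\conv(R')$ contains a point strictly closer to $0$ than $p$, contradicting minimality of $R$.

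For (1), the same script works with $\cone$ replacing $\conv$. Let $p$ realize the minimum distance from $\vec{v}$ to $\cone(R)$ over all rainbow sets $R = \{r_1, \dots, r_d\}$. The nearest-point conditions $\langle x - p, \vec{v} - p\rangle \le 0$ for $x \in \cone(R)$, combined with invariance of the cone under nonnegative rescaling of $p$ (plug in $x = 0$ and $x = 2p$), force $\langle p, \vec{v} - p\rangle = 0$, so $\|\vec{v} - p\|^2 = \|\vec{v}\|^2 - \|p\|^2 > 0$ whenever $p \ne \vec{v}$. Carathéodory then gives $p \in \cone(R \setminus \{r_j\})$ for some $j$, the hypothesis $\vec{v} \in \cone(S_j)$ produces $q \in S_j$ with $\langle q, \vec{v} - p\rangle > 0$, and a direct calculation shows $p + tq \in \cone(R')$ is strictly closer to $\vec{v}$ than $p$ for small $t > 0$, again contradicting minimality.

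The main obstacle in both arguments is verifying that $p$ lies on a proper face of $\conv(R)$ (resp.\ $\cone(R)$), so that an index $j$ can be freed for the exchange; once this is in hand, the existence of the ``inward-pointing'' replacement $q$ is an immediate consequence of the hypothesis $0 \in \conv(S_j)$ (resp.\ $\vec{v} \in \cone(S_j)$) together with the separation property. The bookkeeping around degenerate configurations of $R$ (where $\conv(R)$ or $\cone(R)$ is not full-dimensional) is absorbed into the application of Carathéodory's theorem, which always produces the required face containing $p$.
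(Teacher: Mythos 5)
The paper does not give a proof of Theorem \ref{barany}; it is stated with a citation to B\'ar\'any \cite{barany}, so there is no in-paper argument to compare against. Your proof is B\'ar\'any's original ``closest rainbow simplex'' argument, and it is correct in outline and in all the calculations. The one step that deserves a little more than the phrase ``by Carath\'eodory'' is precisely the one you flag as the main obstacle: for (2), Carath\'eodory applied to $\conv(R)$ with $|R|=d+1$ only gives a representation using at most $d+1$ points, which frees nothing. What actually frees an index is the supporting hyperplane $H=\{x:\langle x,p\rangle=\|p\|^2\}$: since $\conv(R)\subseteq H^+$ and $p\in H$, the point $p$ lies in the face $\conv(R)\cap H=\conv(R\cap H)$, which sits in the $(d-1)$-dimensional flat $H$, and Carath\'eodory \emph{there} writes $p$ using at most $d$ of the $r_i$. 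The analogous step for (1) is that in any conical representation $p=\sum_i\beta_ir_i$ with $\beta_i\ge 0$, taking the inner product with $\vec v-p$ and using $\langle p,\vec v-p\rangle=0$ together with $\langle r_i,\vec v-p\rangle\le 0$ forces $\beta_i=0$ whenever $\langle r_i,\vec v-p\rangle<0$; so $p\in\cone(R\cap H')$ with $H'=\{x:\langle x,\vec v-p\rangle=0\}$ of dimension $d-1$, and the conical Carath\'eodory theorem in $H'$ uses at most $d-1$ linearly independent vectors, again freeing some $r_j$. With this spelled out, the reduction to finite $S_i$, the exchange of $r_j$ for $q$, and the strict decrease of distance via the first-order computation all go through exactly as you wrote them, and the proof is complete.
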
 

The complex $\cc$ consists here of sets not containing $\vec{v}$ in their convex hull (or cone). 
The case of Theorem \ref{drisko} in which the matchings reside in $K_{n,n}$ can be  derived from Theorem \ref{barany} - we omit the proof.

\section{General graphs}\label{sectiongeneralgraphs}
Recall Conjecture \ref{ababa} (1):  
$(2n,n)\to^\cg n$. 
   The fractional version of this conjecture was proved in \cite{ahj}. It says that 
   any $2n$ matchings of size $n$, in a general graph, have a rainbow set of edges possessing a fractional matching of size $n$.    The best result  so far is $(3n-3,n)\to n$ \cite{abkk}. Holmsen and Lee \cite{hl} gave a topological proof to the weaker result $(3n-2,n)\to n$, that is stronger in another sense - it yields a cooperative version: 
   
   \begin{theorem}
    Any collection of $3n-2$ non-empty sets of edges in any graph, satisfying the condition that every pair of them contains in its union a matching of size $n$, has a rainbow matching of size $n$, 
   \end{theorem}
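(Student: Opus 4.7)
The plan is to cast the theorem in the framework of Theorem \ref{km}. Let $\cc$ be the simplicial complex on ground set $V = E(G)$ whose faces are exactly the edge sets that do \emph{not} contain a matching of size $n$. Then a rainbow system of representatives of $F_1,\ldots,F_{3n-2}$ that lies in $\cc^c$ is precisely a choice of edges whose union contains a matching of size $n$, and from this one extracts a rainbow matching of size $n$ by selecting $n$ of the $3n-2$ colors.

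The first and main step is the topological input: one proves the Lerayness bound
\[
\blambda(\cc) \le 3n-4.
\]
This is where the factor $3$ (as opposed to the $2$ that suffices in the bipartite case, cf.\ the discussion around Theorem \ref{drisko}) enters, and it reflects the presence of odd cycles in general graphs. A proof can proceed by induction on $n$ combined with a case analysis on a fixed vertex $v$: contracting or deleting edges at $v$ splits $\cc$ into subcomplexes whose holes can be bounded recursively, and a Mayer--Vietoris / nerve argument then bounds $\blambda(\cc)$.

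The second step is to invoke a cooperative strengthening of Theorem \ref{km}, due to Holmsen and Lee, which says that if $\blambda(\cc) \le d$ and $F_1, \ldots, F_{d+2}$ are nonempty sets such that $F_i \cup F_j \notin \cc$ for every pair $i \ne j$, then there is a rainbow set of the $F_i$ lying in $\cc^c$. Substituting $d = 3n-4$ makes $d+2 = 3n-2$, matching the hypothesis of the theorem: the $3n-2$ given edge sets are nonempty and their pairwise unions contain an $n$-matching, i.e.\ lie in $\cc^c$. The theorem follows.

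The main obstacle is Step 1, the bound $\blambda(\cc) \le 3n-4$, which is genuinely topological and has no elementary substitute; the jump from the bipartite Leray number to the general-graph Leray number is exactly the reason \cite{hl} falls one short of the purely combinatorial bound $(3n-3, n) \to^\cg n$ of \cite{abkk}. Step 2, though nontrivial, is a formal consequence of the Kalai--Meshulam machinery once one observes that the pairwise-union hypothesis lets one replace the singletons in the original argument by the sets $F_i$ directly, at the cost of one additional color.
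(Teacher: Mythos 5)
The paper you are reading is a survey: it cites this theorem from Holmsen and Lee \cite{hl} without giving a proof, so there is no in-paper argument to compare against. Your proposal is, in broad strokes, the Holmsen--Lee strategy (define the non-matching complex, bound its Leray number, then apply a cooperative Kalai--Meshulam principle). However, your arithmetic in Step~1 is wrong, and the error is not innocuous.

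You claim $\blambda(\cc) \le 3n-4$ for $\cc = \{F \subseteq E(G) : \nu(F) < n\}$. If that bound held, the \emph{ordinary} Kalai--Meshulam theorem (Theorem~\ref{km}: $\blambda \le d$ gives a rainbow $\cc^c$-set from any $d+1$ sets in $\cc^c$) would immediately yield $(3n-3,n)\to^\cg n$ topologically. But the paper states explicitly that the topological route of \cite{hl} only reaches the \emph{weaker} $(3n-2,n)\to^\cg n$, one short of the combinatorial $(3n-3,n)\to^\cg n$ of \cite{abkk}. The Leray bound actually established in \cite{hl} is $\blambda(\cc) \le 3n-3$. Your proposal then compensates by invoking a cooperative strengthening that needs $d+2$ colours, so the two off-by-one errors cancel and you land on $3n-2$. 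But the cooperative versions of these theorems in the literature and in this survey (the cooperative Carath\'eodory in Theorem~\ref{hpt}, the cooperative Drisko in Theorem~\ref{coopdrisko}, and Kim's cooperative Kalai--Meshulam in Theorem~\ref{jinha}) all keep the \emph{same} count $d+1$ under the pairwise-union hypothesis; they do not charge an extra colour. The correct bookkeeping is $\blambda(\cc) \le 3n-3 = d$, then $d+1 = 3n-2$ nonempty sets with $\nu(F_i \cup F_j) \ge n$ for all $i \ne j$ give a rainbow matching of size $n$. Note also that when specializing Theorem~\ref{jinha} to this setting one must additionally verify the singleton link conditions (each $\operatorname{lk}_\cc(F_i)$ should be $(3n-4)$-Leray), which you do not mention; this is where the nonemptiness of the $F_i$ is used.

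Finally, your sketch of how to prove the Leray bound (induction on $n$ with a Mayer--Vietoris argument around a fixed vertex) is only a vague gesture and is not the technique used by Holmsen and Lee; since the survey itself gives no proof of this theorem, that gap is less serious, but as it stands Step~1 is stated with the wrong constant and an unsupported method, so the proposal does not constitute a correct proof.
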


   We shall return to the theme of cooperation in Section \ref{coopsection}.
   
   Recently, Chakraborti and Loh \cite{cl} proved an asymptotic version of the conjecture,
   when the matchings are disjoint. 

Recall also Conjecture \ref{abab}:
$(n,n)\to^\cg n-1$. 
 Woolbright  \cite{woolbright} proved $(n,n) \to^\B n-\sqrt{n}$. But in general graphs the situation is gloomier: 
 we do not even know how to show $(n,n) \to^\G n-o(n)$. 
 The best result so far is in \cite{abcz}, the existence of a rainbow matching of size at least $\frac{2n}{3}-1$. The result, with $\sqrt{2n}$ replacing $o(n)$, would follow from the following conjecture:

\begin{conjecture}
Let $F$ be a matching in a graph, and let $\cf_i$ be families of disjoint augmenting $F$-alternating paths. If $\sum |\cf_i| \ge 2|F|$ then there exists a set $K$ of edges, each taken from a path in a different $\cf_i$, that contains an augmenting $F$-alternating path. \end{conjecture}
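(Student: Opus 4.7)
The approach is to apply the Kalai--Meshulam theorem (Theorem~\ref{km}) to the complex
\[
\C := \{A \subseteq E : A \text{ contains no augmenting $F$-alternating path}\},
\]
where $E := \bigcup_i \bigcup \cf_i$. Treating each path $P \in \cf_i$ as its own edge set, every such $P$ lies in $\C^c$, so the hypothesis provides $\sum|\cf_i| \ge 2|F|$ sets in $\C^c$. If one can establish $\blambda(\C) \le 2|F|-1$, then Theorem~\ref{km} yields a rainbow edge selection (one edge per path) that contains an augmenting $F$-alternating path. Since the conjecture asks for edges taken from paths ``in a different $\cf_i$,'' one may additionally need to cull edges drawn from the same family; the within-family disjointness of paths should make such cleanup safe.

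The heart of the argument is the bound $\blambda(\C) \le 2|F|-1$, the augmenting-path analogue of the Lerayness bound $2n-2$ for the complex of edge sets containing no perfect matching in $K_{n,n}$ that underlies the topological proof of Theorem~\ref{drisko}. I would pursue this along two lines. First, one can try to realize (a suitable relaxation of) $\C^c$ as the spanning sets of the intersection of two matroids, encoding the alternation between $F$-edges and non-$F$-edges as well as the unmatched-endpoint condition, and then invoke the standard Lerayness bound for intersections of two matroids. Second, a Drisko-style auxiliary graph construction: contract each edge of $F$ to a vertex, so that augmenting $F$-alternating paths become alternating walks between $F$-unmatched vertices, and each $P \in \cf_i$ becomes a natural edge-subset in an auxiliary multigraph; a rainbow augmenting path then corresponds to a rainbow matching of appropriate size, for which Theorem~\ref{drisko} or its refinement Theorem~\ref{stairs} should apply.

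The main obstacle is the global nature of augmenting paths: the conditions that \emph{both} endpoints be $F$-unmatched and that the alternation holds throughout the path are not captured by local matroidal constraints, so a direct two-matroid representation of $\C^c$ seems elusive. The mirror difficulty for the auxiliary-graph reduction is that encoding both endpoints within a matching framework tends to double the count of required structures, producing a threshold closer to $4|F|$ than $2|F|$. The disjointness of paths within each $\cf_i$, the sole structural feature not yet used, is the plausible lever for closing this gap; exploiting it to constrain the topology of $\C$, or equivalently to induce an extra matroidal structure on each family's contribution to the auxiliary graph, seems essential to recovering the sharp constant $2|F|$.
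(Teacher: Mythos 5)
You should first note that this statement appears in the paper as a \emph{conjecture}, not a theorem: the paper offers no proof, and there is nothing to compare your attempt against. The only case recorded as settled (credited to \cite{abchs}) is when every $\cf_i$ is a singleton path; the general statement is explicitly left open. So what you are attempting is new mathematics, and you should calibrate accordingly.

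Your sketch does not close, and you say as much, but there is a structural mismatch you have not flagged that is independent of the unproven Lerayness bound. If you apply Kalai--Meshulam to the collection of individual \emph{paths} (one ground-set member of $\C^c$ per path, $\sum_i|\cf_i|$ of them), then even granting $\blambda(\C)\le 2|F|-1$, the output is a rainbow set containing at most one edge from each \emph{path}. The conjecture, by contrast, asks for a set of edges coming from paths in pairwise distinct \emph{families} $\cf_i$. A single $\cf_i$ may contain many disjoint augmenting paths, so your rainbow augmenting path could well use edges from two different paths of the same family---that is not the conjectured conclusion, and the ``culling'' you suggest cannot repair it, since removing an edge of the augmenting path destroys it. The alternative of feeding Kalai--Meshulam the sets $\bigcup\cf_i$ (one per family, which would make the rainbow conclusion of the right shape) abandons the hypothesis: you then have only as many sets as there are families, not $\sum|\cf_i|\ge 2|F|$ of them. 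The whole content of the conjecture is the tension between these two framings---the count comes from paths, the rainbow structure from families---and the within-family disjointness is supposed to be the bridge. Your sketch gestures at this but does not build the bridge. That gap, on top of the unestablished bound $\blambda(\C)\le 2|F|-1$ (for which, as you correctly observe, neither a direct two-matroid encoding nor a contraction-to-auxiliary-graph reduction evidently produces the sharp constant $2|F|$), leaves the argument a research program rather than a proof.
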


A proof of this fact when the $\cf_i$s are each a singleton path is at the core of the proof in \cite{abchs}. The non-alternating version of this conjecture was proved in  \cite{abcz}:

\begin{theorem}
Let $F$ be a set of vertices in a graph, and let $\cf_i$ be families of disjoint paths, starting and ending outside $F$.  If $\sum |\cf_i| \ge 2|F|$ then there exists a set $K$ of edges, each taken from a path in a different $\cf_i$, that contains a path starting and ending outside $\bigcup F$. \end{theorem}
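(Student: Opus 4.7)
My plan is induction on $|F|$ under the strengthened hypothesis that there exists a rainbow $K$ which is exactly the edge set of a single path of length at most $|F|+1$ with both endpoints outside $F$. The base case $|F|=0$ is immediate: any edge of any non-empty $\cf_i$ suffices.

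For the inductive step I first dispose of the ``short'' cases. If some path in some $\cf_i$ contains an edge $e$ with both endpoints in $V \setminus F$, take $K=\{e\}$. Otherwise every edge of every path has at least one endpoint in $F$, forcing the second vertex from each endpoint of each path to lie in $F$, and every interior outside vertex to be sandwiched between two $F$-vertices. For each $v \in F$ and family $\cf_i$, let $N_i^*(v)$ denote the set of outside neighbours of $v$ in the (unique, by disjointness) path of $\cf_i$ through $v$. If distinct $u \ne u'$ lie in $N_i^*(v)$ and $N_j^*(v)$ for some $i \ne j$, then $K=\{\{u,v\},\{v,u'\}\}$ is a valid $2$-edge rainbow path. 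Otherwise at each $v$ one of two configurations holds: (a) some unique $\cf_{i^*}$ has $|N_{i^*}^*(v)|=2$ while every other $N_j^*(v)$ is empty, or (b) all non-empty $N_i^*(v)$ coincide with a common singleton $\{u^*(v)\}$.

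Counting ``tail-edge tokens'' (each outside vertex of each path contributes its $P$-neighbours --- one for endpoints, two for interior outside vertices) yields at least $2\sum |\cf_i| \ge 4|F|$ tokens across the $|F|$ vertices of $F$. Since case (a) contributes at most $2$ tokens at any $v$, some $v^* \in F$ lies in case (b) and collects at least $2 + \tfrac{2|F|}{b}$ tokens (where $b$ is the number of case-(b) vertices), all corresponding to the single edge $\{u^*,v^*\}$ (with $u^* := u^*(v^*)$) appearing in that many distinct families' paths. Setting $F' := F \setminus \{v^*\}$, the induction hypothesis (applicable since $\sum |\cf_i| \ge 2|F| > 2|F'|$) supplies a rainbow $K'$ equal to the edge set of a path $P'$ of length at most $|F|$ with endpoints outside $F'$. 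If both endpoints of $P'$ lie outside $F$, done. Otherwise $v^*$ is an endpoint of $P'$ and the other endpoint $w$ is outside $F$: if $u^* \in V(P')$ truncate to the subpath from $u^*$ to $w$; else choose a family $\cf_j$ not used by $K'$ whose path contains $\{u^*,v^*\}$ and set $K := K' \cup \{\{u^*,v^*\}\}$, a path of length at most $|F|+1$ from $u^*$ to $w$ with both endpoints outside $F$.

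The main obstacle will be guaranteeing a ``free'' family $\cf_j$ in the last step: this requires the lower bound $2 + \tfrac{2|F|}{b}$ on the number of families through $\{u^*,v^*\}$ to strictly exceed $|K'| \le |F|$. The bound is abundant when $b$ is small (indeed it gives at least $2|F|+2$ families when $b=1$), but is tight when many vertices lie in case (b). Closing the gap will likely require either a sharper length bound on $K'$ --- perhaps $|K'| \le 2b'+1$ where $b'$ counts case-(b) vertices encountered in the recursion --- or an iterative analysis at $u^*$: since each of the many paths through $\{u^*,v^*\}$ continues past $u^*$ to another $F$-vertex, one can chase forced edge repetitions and ultimately contradict $\sum |\cf_i| \ge 2|F|$ in any surviving minimum counter-example.
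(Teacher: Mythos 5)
The paper does not actually reproduce a proof of this theorem: it is stated as a result of \cite{abcz}, so there is no in-paper argument to compare yours against. Judged on its own, your attempt is an intelligent sketch but contains the gap you yourself flag, and I do not think your suggested repairs close it.

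Your induction with the strengthened invariant (that $K$ is the edge set of a \emph{single} path of length at most $|F|+1$, endpoints outside $F$) is a sensible framing, and the first stages of the inductive step are sound: handling a ``short'' edge with both ends outside $F$, handling the two-edge rainbow cherry $u$--$v$--$u'$, and the token count showing that case-(a) vertices absorb only $2$ tokens each so some case-(b) vertex $v^*$ receives at least $2+2|F|/b$ of them, all correspond to a single edge $u^*v^*$ carried by that many distinct families. The recursive call on $F'=F\setminus\{v^*\}$ is also legitimate (the hypotheses persist, and the strengthened conclusion gives $|K'|\le|F|$ with endpoints outside $F'$, hence either outside $F$ or equal to $v^*$). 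The problem is exactly at the splice: you need a family carrying $u^*v^*$ that is \emph{not} among the $\le |F|$ families consumed by $K'$. Your lower bound $2+2|F|/b$ exceeds $|F|$ only when $b < 2|F|/(|F|-2)$, i.e.\ when $b\le 2$ once $|F|\ge 7$; but there is nothing preventing $b$ from being as large as $|F|$, in which case the bound gives only about $4$ families through $u^*v^*$ against potentially $|F|$ families in $K'$.

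The repairs you float do not obviously help. If you could prove $|K'|\le 2b'+1$ with $b'\le b$, the needed inequality $2+2|F|/b > 2b+1$ still forces $b=O(\sqrt{|F|})$, so large-$b$ instances remain unhandled. The alternative ``iterative analysis at $u^*$'' is not developed at all. There is also a second, subtler issue: after deleting $v^*$, edges incident to $v^*$ with the other endpoint outside $F$ become ``short'' for $F'$, so the recursion may return the one-edge path $v^*w$ immediately -- fine in that instance, but it means the internal structure of $K'$ (which edges/families it uses, and whether it re-enters the neighbourhood of $u^*$) is not under your control, so a clean amortized bound tying $|K'|$ to the recursion's case-(b) count seems hard to extract without a genuinely new idea. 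As it stands, the argument is incomplete and does not establish the theorem.
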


The conjecture would follow from $(n,n+o(n)) \to^\G n$, but presently this is  known only for sets of edge-disjoint matchings.

\section{Covering the vertex set by rainbow sets}\label{covering}

The next step goes one level higher: asking for many disjoint full rainbow sets.  
The best known conjecture of this sort  is Rota's conjecture, that says that given $n$ independent sets of size $n$ in a matroid $\cm$, there are $n$ disjoint $\cm$-independent rainbow sets. A recent startling development on the conjecture was its asymptotic proof by Pokrovskiy \cite{pokrota} - there are $n$ partial rainbow independent sets covering $n^2-o(n)$ of the elements.

 For a hypergraph $H$, denote by $\rho(H)$ the minimal number of edges covering $V(H)$. A conceivable strengthening of Rota's conjecture  is that the sets need not be in $\cm$ - it may suffice that they are in $\M$ only after scrambling. Namely - does it suffice to assume that $\rho(\cm) \le n$? 
 In \cite{ak} counterexamples were given for $n$ odd, but the following may  be true: 

\begin{conjecture}\label{scrambledrota}\hfill
\begin{enumerate}
    \item

If $\rho(\cm)=n$ and $V(\cm)$ is partitioned into $n$ sets of size $n$, then  $V(\cm)$ can be partitioned into $n+1$ rainbow $\cm$-sets. 

\item If $n$ is even then $n$ rainbow $\cm$-sets suffice.
\end{enumerate}
\end{conjecture}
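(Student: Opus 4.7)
The plan is to reformulate the conjecture as a covering-number bound on a matroid intersection, and then attempt a topological or iterative matroid-theoretic attack. Let $\cc$ denote the partition matroid on $V(\cm)$ whose parts are the color classes $C_1, \ldots, C_n$. Then a rainbow $\cm$-independent set is precisely a common independent set of $\cm$ and $\cc$. Writing $\rho(\K)$ for the minimum number of members of a complex $\K$ needed to cover the ground set, the conjecture becomes $\rho(\cm \cap \cc) \leq n+1$, strengthened to $\rho(\cm \cap \cc) \leq n$ when $n$ is even. Since $\rho(\cm) = n$ by hypothesis and $\rho(\cc) = n$ (each $C_i$ has size $n$), this is a special case of the Aharoni--Berger covering conjecture $\rho(\cm_1 \cap \cm_2) \leq \max(\rho(\cm_1), \rho(\cm_2)) + 1$, whose general status is already open.

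First, I would try an iterative greedy strategy. In each round, use matroid intersection to extract a common independent set $R \subseteq V(\cm)$, remove it, and recurse on the pair of restricted matroids. Rather than always taking a maximum set, one would refine the choice of $R$ so that it is a joint rainbow set with respect to \emph{both} the given color partition $\{C_i\}$ and a fixed $\cm$-cover $I_1, \ldots, I_n$ coming from the hypothesis $\rho(\cm) = n$. Finding such a ``doubly rainbow'' $\cm$-independent set is itself a choice-function problem, to which one would apply Topological Hall (Theorem \ref{matcomp}) on the auxiliary setup where one partition matroid has parts $C_i$ and the other has parts $I_i$. Doing this for $n$ rounds peels off $n$ bases, and the extra $(n+1)$-st set in the statement absorbs the inevitable parity loss in the final round.

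If the iterative route stalls, the fallback is a direct topological bound on the complex $\K := \cm \cap \cc$. The Briggs--Kim bound gives Leray number at most $2n-2$ for the intersection of two rank-$n$ matroids; one would refine this using the extra assumption $\rho(\cm) = n$ to obtain a bound of at most $n$ (respectively $n-1$ for even $n$). By Alexander duality (Theorem \ref{alexander}), this converts into a connectivity bound on $D(\K)$, which then feeds into a Topological Hall argument (Theorem \ref{matcomp}) producing a partition of $V(\cm)$ into $n+1$ members of $\K$ --- exactly the required rainbow independent sets.

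The main obstacle I anticipate is the parity gap. For odd $n$ the counterexamples of \cite{ak} show that $n+1$ pieces are genuinely necessary, so any proof must incorporate an ingredient that distinguishes parities --- most likely a mod-$2$ invariant similar to the one underlying Ryser's theorem for Latin squares. The source of the obstruction is essentially that of Conjecture \ref{brs}: a full rainbow base in the present setting is a transversal of an object closely resembling a Latin square, and its existence is controlled by parity. I therefore expect the hardest step to be sharpening the bound from $n+1$ to $n$ in the even case (part (2)), since there the parity gain must actually yield a saving of one rainbow set, and in all approaches above this saving appears only in the very last round of the induction.
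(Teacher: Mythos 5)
The statement you are trying to prove is Conjecture \ref{scrambledrota}, which the paper explicitly leaves open --- there is no paper proof to compare against. What you have written is a research program, not a proof, and in its current form it does not close. Your reformulation of part~(1) as $\rho(\cm \cap \cc) \le n+1$ for the associated partition matroid $\cc$ is correct, and the paper itself makes exactly this observation immediately afterward: part~(1) is the special case of Conjecture \ref{matroid_s-g} in which one of the two matroids is a partition matroid. So your first step recovers the paper's own remark rather than making progress on it.

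The genuine gaps are in both of your proposed routes. In the iterative strategy, you want at each round a common independent set of $\cm$ and $\cc$ that is simultaneously rainbow with respect to a fixed $\cm$-cover $I_1, \ldots, I_n$; but you give no argument that such a ``doubly rainbow'' basis exists, and even granting its existence at round one, the hypotheses $\rho = n$ need not survive deletion of a basis, so the recursion does not obviously preserve its own input conditions --- this is the standard obstruction that makes Rota-type covering statements resistant to peeling arguments. In the topological fallback you acknowledge that the Briggs--Kim bound gives Leray number at most $2n-2$ and then say one ``would refine this'' to $n$ using $\rho(\cm)=n$; but no mechanism for that refinement is offered, and a Leray bound of $n$ for $\cm \cap \cc$ would be a major new theorem in its own right, essentially subsuming the conjecture. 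Your discussion of the parity gap for part~(2) correctly mirrors the paper's own caveat (the $n$-even part is ``rather daring'', and \cite{ak} gives odd counterexamples), but no proof strategy is offered for where a mod-$2$ invariant would enter. In short, the reformulation is right, the obstacles you flag are real, and the conjecture remains a conjecture.
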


Admittedly, the $n$-even part of the conjecture is rather daring, and is safer to be conjectured just for linear matroids. But it matches a  difference in the original conjecture: in the linear case, Rota's conjecture is known  for infinitely many even $n$'s, while for $n$ odd it is known only for $n=3$ (a result of R. Huang).

Part (1) of the conjecture   is a special case of:

\begin{conjecture}\label{matroid_s-g}
If $\M, \N$ are two matroids on the same ground set, then $\rho(\M \cap \N) \le \max(\rho(\M), \rho(\N))+1$.
\end{conjecture}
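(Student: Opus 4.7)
The plan is to combine matroid-partition duality with an inductive step aimed at saving the extra $+1$ that no naive argument delivers. Write $k = \max(\rho(\M), \rho(\N))$. By Edmonds' matroid partition theorem, the hypothesis is equivalent to the pair of Hall-type inequalities
\[
|S| \le k\cdot \rk_\M(S) \quad \text{and} \quad |S| \le k\cdot \rk_\N(S) \qquad \text{for every } S\subseteq V.
\]
Summing these over any bipartition $V = T\sqcup (V\setminus T)$ yields $|V| \le k(\rk_\M(T) + \rk_\N(V\setminus T))$, whence the matroid intersection theorem produces a common independent set of size at least $|V|/k$. The goal is to partition $V$ into $k+1$ such sets.

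As a sanity check I would first settle the partition-matroid case. There, elements of $V$ organise into an $a \times b$ ``count matrix'' indexed by the parts of $\M$ and $\N$, common independent sets become partial permutation matrices, and covering becomes edge-colouring of the associated bipartite multigraph. K\"onig's theorem then gives the stronger bound $\rho(\M\cap\N) \le \max(\rho(\M),\rho(\N))$, sharper than the conjecture by one. So the slack of $+1$ is paying for non-transversal matroidal obstructions, and an instructive intermediate target is Conjecture \ref{scrambledrota}(1), which is precisely Conjecture \ref{matroid_s-g} specialised to the case where $\N$ is a partition matroid with equal-sized parts.

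The substantive step I would attempt is an induction on $|V|$: find a common independent set $I \in \M\cap\N$ such that both $\rho(\M|(V\setminus I)) \le k-1$ and $\rho(\N|(V\setminus I)) \le k-1$, so that the inductive hypothesis (applied at level $k-1$) covers $V \setminus I$ by $k$ common independent sets and $I$ completes the partition. The Edmonds criterion translates this into the requirement that $I$ contain a basis of every tight set of each of $\M$ and $\N$; the size bound $|I| \ge |V|/k$ coming from matroid intersection is not enough on its own, so the saving of one set must come from a careful structural choice of $I$. A topological alternative would be to exploit the Briggs--Kim bound on the Leray number of $\{A : \rk_{\M\cap\N}(A) < n\}$ together with Theorem \ref{kmleray} and Alexander duality (Theorem \ref{alexander}), but those results naturally produce rainbow statements rather than covering statements, so an additional ingredient would be needed to bridge the two.

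The chief obstacle, I expect, is the absence of any Edmonds-style partition theorem for $\M\cap\N$: the covering number of a matroid intersection is not characterised by Hall-type rank inequalities alone, so the ``$+1$'' must absorb a genuine combinatorial obstruction -- plausibly of the odd-cycle/parity flavour visible in the counterexamples to Conjecture \ref{scrambledrota} for odd $n$. A successful proof will have to identify this obstruction and show it is the only one; absent such a structural insight, the best trivial bound is $\rho(\M\cap\N) \le \rho(\M)\rho(\N) = k^2$, obtained by intersecting an $\M$-cover with an $\N$-cover element by element.
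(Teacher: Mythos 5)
This statement is an open \emph{conjecture} in the paper, not a theorem: the paper offers no proof of it, only context (it implies Conjecture~\ref{scrambledrota}(1), is known without the ``$+1$'' for truncated partition matroids via K\"onig's edge-colouring theorem, and is a close cousin of Goldberg--Seymour). So there is no ``paper's own proof'' against which to measure your attempt, and indeed your write-up is a discussion of possible strategies rather than an argument — you acknowledge as much when you say the saving of one set ``must come from a careful structural choice of $I$'' that you do not supply. Your preliminary observations are sound: the covering-number condition is equivalent to $|S|\le k\,\rk_\M(S)$ and $|S|\le k\,\rk_\N(S)$ by the matroid covering (Edmonds/Nash--Williams) theorem; combining these over a bipartition and invoking matroid intersection does give a common independent set of size at least $|V|/k$ (the paper states exactly this); and the truncated-partition-matroid case does follow from K\"onig and drops the $+1$. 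Your identification of the bottleneck is also correct: there is no Edmonds-type min-max formula for $\rho(\M\cap\N)$, and the only proven general bound in the paper is the weaker $\rho(\M\cap\N)\le 2\max(\rho(\M),\rho(\N))$ (Theorem~\ref{2factor}). The inductive step you outline — choosing $I\in\M\cap\N$ so that deleting it reduces both covering numbers by one — is the natural thing to try, but it is precisely where all known approaches stall; you would need $I$ to simultaneously hit a basis of every tight set in both matroids, and nothing you write forces such an $I$ to exist. In short: no gap to report, but no proof either, because the statement is genuinely open.
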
 
Conjecture \ref{scrambledrota}(1) 
is the  case in which one of the two matroids is a partition matroid. 
The conjecture is known even without the ``$+1$'', when $\cm$ and $\cn$ are truncated partition matroids, namely the intersection of a partition matroid with a uniform matroid (the latter being the collection of sets of size $\le k$ for some $k$) - this is a re-formulation of  K\"onig's edge-coloring theorem.  In particular, this is true when $rank(\cm), rank(\cn) \le 2$ - it is easy to show that a rank $2$ matroid is of this type.

It is not hard to show that if $\M,\N$ are matroids, then $\max(\rho(\M), \rho(\N)) = \rho^*(\M \cap \N)$, and therefore Conjecture \ref{matroid_s-g} can be re-formulated as:
\begin{conjecture}
$\rho(\M \cap \N) \le \rho^*(\M \cap \N)+1$
\end{conjecture}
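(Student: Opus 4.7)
The plan is to use the reformulation in the statement itself, together with Edmonds' matroid partition theorem, to reduce to a structural problem on two cross-partitions of $V$, and then attack the remaining ``$+1$'' topologically or by matroid exchange. Write $r := \rho^*(\M \cap \N) = \max(\rho(\M), \rho(\N))$, using the identity noted just above the conjecture. Edmonds' partition theorem applied to each matroid gives partitions $V = I_1 \sqcup \cdots \sqcup I_r$ with $I_i \in \M$ and $V = J_1 \sqcup \cdots \sqcup J_r$ with $J_j \in \N$. Each $v \in V$ thus carries a label pair $(\sigma(v),\tau(v)) \in [r]\times[r]$, and the goal becomes repartitioning $V$ into $r+1$ classes, each lying in $\M \cap \N$.

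The most natural attack is topological, via Theorem \ref{km} applied to the complex $\C := \M \cap \N$. A bound of Briggs and Kim controls the Leray number of $\{A : \rank_{\M\cap\N}(A) < k\}$ by roughly $2k$. I would attempt an iterative peeling: at each stage consider the uncovered vertex set $U$, apply topological Hall (Theorem \ref{matcomp}) to the restricted complex to extract a ``large'' common independent set $C_t$, and then recurse on $U \setminus C_t$. With the right choice of ``large'' at each step, calibrated to the current fractional cover, one hopes that after $r+1$ steps $U$ is empty. A parallel combinatorial attack would be a Galvin-style stable-matching argument on the bipartite multigraph whose edges are $e_v = I_{\sigma(v)} J_{\tau(v)}$, layered with matroid exchanges to account for the fact that $\M$-independence is genuinely stronger than distinct $\sigma$-labels.

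The main obstacle is the sharpness of the ``$+1$''. Both approaches lose more than one in the worst case: the Leray bound of roughly $2k$ is a factor of two too weak to produce $r+1$ via Theorem \ref{km} directly, and Galvin's kernel argument relies on bipartite structure that dissolves once one imposes general matroid constraints on each color class. The conjecture is known in several special cases --- rank-two matroids, truncated partition matroids (where it becomes K\"onig's edge-coloring theorem), and partition matroid intersected with any matroid --- but each known proof exploits features specific to its setting. A general proof is likely to require a genuinely new matroid exchange lemma of the same flavor as those sought for Rota's conjecture, to which the present conjecture is closely related via its partition-matroid specialization Conjecture \ref{scrambledrota}(1).
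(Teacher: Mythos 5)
The statement you are attempting is labelled a \emph{conjecture} in the paper, not a theorem: the paper offers no proof, only the observation that it is equivalent to Conjecture \ref{matroid_s-g} via the identity $\max(\rho(\M),\rho(\N))=\rho^{*}(\M\cap\N)$, the remark that it is a relative of Goldberg--Seymour, and the weaker Theorem \ref{2factor} giving a factor-of-two bound. Your proposal does not actually prove it either --- you set up two plans of attack and then, correctly, conclude that both fall short of the sharp ``$+1$'' and that a new exchange idea is needed. So there is nothing here to compare against a proof in the paper; what you have written is an honest assessment that the conjecture is open, which agrees with the paper.

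A few points in the write-up deserve correction nonetheless. First, you list ``partition matroid intersected with any matroid'' among the \emph{known} special cases, but the paper presents exactly that case as Conjecture \ref{scrambledrota}(1), still open; the cases the paper actually records as settled are rank at most two and (truncated) partition matroids on both sides, the latter being K\"onig's edge-colouring theorem. Second, your invocation of Theorem \ref{km} on the complex $\C=\M\cap\N$ is misdirected: that theorem extracts a rainbow set \emph{outside} a Leray complex from many sets outside it, which is a different shape of problem from covering $V$ by sets \emph{inside} $\M\cap\N$. The covering-by-intersection result that the topological machinery genuinely yields is Theorem \ref{2factor}, $\rho(\M\cap\N)\le 2\max(\rho(\M),\rho(\N))$, and as you note this loses a factor of two rather than an additive one --- that is precisely the gap the conjecture asks to close, and neither the peeling nor the Galvin-style route you sketch bridges it.
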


This is a close relative of the famous Goldberg-Seymour conjecture, stating the same inequality where the intersection of two matroids is replaced by the matching complex of a multigraph. 
The Goldberg-Seymour conjecture has recently been claimed to be solved \cite{gs}, and the methods used there (mainly relying on alternating paths) may be relevant also to this conjecture. 

In fact, there is a common generalization to this conjecture and the Goldberg-Seymour conjecture. Its objects are $2$-{\em polymatroids}. A hypergraph $\PP$ on a vertex set $V$ is called a $2$-polymatroid if there exists a matroid $\M$ is on $V \times \{1,2\}$, such that $\PP=\{A \mid A\times \{1,2\}\in \M\}$. In \cite{2poly}
the following was proposed:

\begin{conjecture}
If every circuit (minimal non-$\M$ set) meets every pair $\{(v,1),(v,2)\}$ in at most one element, then 
$\rho(\PP) \le \rho^*(\PP) +1$.
\end{conjecture}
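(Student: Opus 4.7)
The plan is to adapt the alternating-path machinery from the recently announced Goldberg--Seymour proof~\cite{gs} to the polymatroid setting, this being the common generalization the authors themselves flag. Set $k := \rho^*(\PP)$ and suppose for contradiction that $\rho(\PP) \ge k+2$. Choose a system $(A_1,\dots,A_{k+1})$ of members of $\PP$ maximizing $|\bigcup_i A_i|$, and fix an uncovered vertex $v$. For each $i$, maximality forces $A_i \cup \{v\} \notin \PP$, so $(A_i \cup \{v\})\times\{1,2\}$ contains a circuit $C_i$ of $\M$; by the circuit hypothesis $C_i$ contains exactly one of $(v,1),(v,2)$. Any exchange that frees room in $A_i$ for $v$ must replace some $u \in A_i$ with $v$, and the useful candidates $u$ are controlled by how $C_i$ meets the two copies of the remaining members of $A_i$. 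It is precisely the circuit hypothesis — forbidding any single circuit from containing both $(u,1)$ and $(u,2)$ — that keeps the two coordinates disentangled and makes a local exchange analysis viable.

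Starting from $v$ I would then grow a polymatroid analogue of a Vizing fan / Tashkinov tree: inductively add vertex--color pairs $(u,i)$ where $u \in A_i$ is a candidate exchange partner with respect to the current partial swap, tracking a shifting sequence of circuits of $\M$. If the fan ever closes into a valid augmenting configuration, the cover can be enlarged, contradicting the maximality of $|\bigcup_i A_i|$. Otherwise it stabilizes on a set $S \subseteq V(\PP)$ whose restricted polymatroid $\PP[S]$, by a Hall-type defect count on the accumulated circuits, requires strictly more than $k$ fractional pieces to cover — contradicting $\rho^*(\PP[S]) \le \rho^*(\PP) = k$.

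The hard part will be the fan construction itself. Even restricted to multigraphs this is the famously intricate Tashkinov-tree analysis at the heart of Goldberg--Seymour, and two further difficulties appear in the polymatroid setting. First, one must simultaneously respect both copies $(u,1)$ and $(u,2)$ of each vertex: a swap that legalizes one copy can destroy the other, and the circuit hypothesis is the only available tool to decouple the two. Second, the polymatroid lacks the local degree constraints that anchor the graph-theoretic argument, so the correct polymatroid-level substitute for a Tashkinov tree — presumably some notion of ``closed'' alternating family in the exchange graph of $\M$, compatible with the pair-structure on $V \times \{1,2\}$ — will need to be identified and shown to obey a useful recursion under exchanges. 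Pinning down this right analogue, and verifying that the circuit hypothesis is exactly the ingredient that lets it recurse cleanly, is, I expect, the crux of the entire proof.
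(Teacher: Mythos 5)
This statement is labelled a \emph{conjecture} in the paper, not a theorem, and the paper offers no proof of it. It is proposed in \cite{2poly} as a common generalization of Conjecture~\ref{matroid_s-g} and the Goldberg--Seymour conjecture, and the only result the paper records is that \cite{2poly} settles the special case $\rho^*(\PP)\le 2$. So there is no ``paper's own proof'' for your attempt to be compared against; you have been handed an open problem.

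Your write-up, to its credit, is honest about this: it is a programme, not a proof. The opening moves are sound --- reducing to a maximal cover, picking an uncovered vertex $v$, and using the circuit hypothesis to conclude that the obstruction circuit in $(A_i \cup \{v\})\times\{1,2\}$ hits exactly one of $(v,1),(v,2)$ is all correct, and this is exactly the reason the hypothesis is imposed. But the entire weight of the argument rests on the unspecified ``polymatroid Tashkinov tree'', and you say yourself that identifying the right closed alternating structure and proving its recursion is the crux. Nothing in the sketch certifies that such a structure exists, that exchanges preserve it, or that a stabilized structure yields the Hall-type defect set $S$ with $\rho^*(\PP[S]) > k$. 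In the multigraph case these are the deepest steps of the Goldberg--Seymour proof, not routine bookkeeping. Two further obstacles you mention are genuinely dangerous and currently have no workaround: a swap legalizing $(u,1)$ can create a circuit through $(u,2)$ in a way that the circuit hypothesis does not by itself forbid (it constrains single circuits, not interactions between successive circuits along the fan), and the polymatroid has no local degree information to bound the fan's growth. Until those gaps are filled, this remains a plausible line of attack on an open conjecture rather than an argument.
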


In \cite{2poly} this is proved when $\rho^*(\PP)\le 2$.

In \cite{matcomp} the following was proved:

 \begin{theorem}\label{2factor}
 
 $\rho(\M \cap \N) \le 2 \max(\rho(M), \rho(N))$. 
 \end{theorem}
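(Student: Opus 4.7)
The plan is to combine Edmonds' matroid partition theorem with a matroid-union argument. With $k := \max(\rho(\M), \rho(\N))$, Edmonds' partition theorem yields decompositions $V = A_1 \sqcup \cdots \sqcup A_k$ with each $A_i \in \M$ and $V = B_1 \sqcup \cdots \sqcup B_k$ with each $B_j \in \N$. Every cell $A_i \cap B_j$ automatically lies in $\M \cap \N$, giving a cover of $V$ by $k^2$ common independent sets; the task is to collapse this to $2k$.

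The key step is to look for a single bipartition $V = X \sqcup Y$ such that for every $j$ the slice $X \cap B_j$ is $\M$-independent (and hence lies in $\M \cap \N$, being already contained in $B_j \in \N$), and for every $i$ the slice $Y \cap A_i$ is $\N$-independent (hence in $\M \cap \N$). The resulting $2k$ slices $\{X \cap B_j\}_{j=1}^k \cup \{Y \cap A_i\}_{i=1}^k$ then cover $V$. The existence of such $(X,Y)$ is equivalent to $V$ being independent in the matroid union $\bigl(\bigoplus_{j=1}^k \M|_{B_j}\bigr) \vee \bigl(\bigoplus_{i=1}^k \N|_{A_i}\bigr)$, and by Edmonds' matroid-union theorem this reduces to the Hall-type condition
\[
|S| \;\le\; \sum_{j=1}^k \rk_\M(S \cap B_j) + \sum_{i=1}^k \rk_\N(S \cap A_i) \qquad \forall\, S \subseteq V.
\]

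The main obstacle is establishing this Hall inequality. The naive substitution of the Edmonds partition hypotheses $|T| \le k\,\rk_\M(T)$ and $|T| \le k\,\rk_\N(T)$ gives only $|S| \le \tfrac{k}{2}(\rk_\M(S) + \rk_\N(S))$, which is too weak once $k \ge 3$, and explicit examples -- say $\M$ and $\N$ the column- and row-partition matroids of a $k \times k$ grid, with $A_i, B_j$ taken as the rows and columns -- show that the Hall bound can genuinely fail for a fixed choice of starting partitions. The remedy, I expect, is either to modify the partitions $\{A_i\}, \{B_j\}$ by a matroid-intersection-style augmenting-path procedure until the Hall inequality is restored, or to bypass the explicit $(X,Y)$ construction and instead apply the topological Hall theorem (Theorem \ref{matcomp}) to the auxiliary complex on $V \times [2k]$ whose faces encode partial $(\M \cap \N)$-valued $2k$-colorings of $V$, with connectivity controlled by known estimates for intersections of matroid complexes. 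In both routes the factor of $2$ arises from the symmetric roles of $\M$ and $\N$, and the argument must exploit the exchange axioms of both matroids in concert.
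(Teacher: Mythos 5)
Your reduction is a genuinely nice idea: take Nash-Williams/Edmonds covering partitions $V=\bigsqcup_i A_i=\bigsqcup_j B_j$ with $A_i\in\M$, $B_j\in\N$, and seek a single split $V=X\sqcup Y$ so that the $2k$ slices $\{X\cap B_j\}_j\cup\{Y\cap A_i\}_i$ all land in $\M\cap\N$ (each $X\cap B_j$ is in $\N$ automatically since $B_j\in\N$, and the requirement is that it also be in $\M$; dually for $Y\cap A_i$). Recasting the existence of such a split as $V$ being independent in the matroid union $\bigl(\bigoplus_j\M|_{B_j}\bigr)\vee\bigl(\bigoplus_i\N|_{A_i}\bigr)$ is correct, and the Hall-type rank inequality you write down is exactly the right obstruction.

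But the gap you flag is real and, as written, unresolved. Your grid example (column- and row-partition matroids on $[k]\times[k]$, with $\{A_i\}$ the rows and $\{B_j\}$ the columns) shows the inequality fails already at $S=V$: the right-hand side equals $2k$ while $|V|=k^2$. Indeed, for that choice any admissible $X$ meets each column in at most one point, forcing $|X|\le k$ and symmetrically $|Y|\le k$, which is hopeless for $k\ge 3$. In this particular instance a repair exists --- take the $B_j$ to be the $k$ cyclic diagonals, so that each $B_j$ is already a common independent set and $X=V$, $Y=\emptyset$ works --- but you give no argument that a repair is always possible, and neither of your proposed remedies is developed to the point where it could be checked. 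The "matroid-intersection-style augmenting-path procedure" has no specified invariant, no quantity that decreases, and no reason the terminal partitions should satisfy the Hall bound; the "auxiliary complex on $V\times[2k]$" route would require a concrete connectivity lower bound on $\M\cap\N$ (or its links/restrictions) to feed into Theorem \ref{matcomp}, and that bound is precisely the nontrivial content one would be trying to prove. In effect the central inequality is assumed rather than established.

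For context, the paper itself contains no proof of this theorem; it cites \cite{matcomp}, where the argument is topological rather than of the partition-splitting flavour you propose (it proceeds via connectivity estimates for the complex $\M\cap\N$ and a deficiency form of the matroidal topological Hall theorem). So yours is a genuinely different route, and if the repaired-partition step could be made rigorous it would be an interesting combinatorial alternative; as it stands, however, the argument is incomplete.
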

 
Can an analogous packing result (rather than covering) be proved? It is 
easy to prove that there exists a set in $\M \cap \N$
of size $\lceil \frac{|V|}{\max(\rho(M), \rho(N))}\rceil$. Can we prove the existence of ``many'' disjoint such sets?

 A beautiful  
strengthening of Theorem \ref{2factor} was suggested in [\cite{berczi}, Conjecture 1.10]:

\begin{conjecture}\label{partitionapproximation} 
Every matroid $\cm$ contains a partition matroid $\cp$ with $\rho(\cp)\le 2\rho(\cm)$.
\end{conjecture}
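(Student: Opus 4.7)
The plan is first to reformulate the goal cleanly. Setting $k=\rho(\cm)$, a partition matroid $\cp$ with parts $Q_1,\ldots,Q_m$ satisfies $\cp\subseteq\cm$ if and only if no $\cm$-circuit is a rainbow transversal of $(Q_1,\ldots,Q_m)$---that is, every $\cm$-circuit meets some $Q_i$ in at least two elements---and in that case $\rho(\cp)=\max_i|Q_i|$. So the target is a partition $V=Q_1\sqcup\cdots\sqcup Q_m$ with $\max_i|Q_i|\le 2k$ that is non-rainbow on every $\cm$-circuit.

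I would try the following natural construction. Fix a basis $B=\{b_1,\ldots,b_r\}$ of $\cm$ and, for each $v\in V\setminus B$, let $C(v,B)$ denote the fundamental circuit of $v$ over $B$. I seek an assignment $\phi:V\setminus B\to B$ with $\phi(v)\in C(v,B)\setminus\{v\}$, and then set $Q_b:=\{b\}\cup\phi^{-1}(b)$. By Hall's theorem applied with right-capacities $2k-1$, such a $\phi$ with $|\phi^{-1}(b)|\le 2k-1$ (hence $|Q_b|\le 2k$) exists provided
\[
\bigl|\{v\in V\setminus B:\,C(v,B)\setminus\{v\}\subseteq U\}\bigr|\;\le\;(2k-1)|U|\qquad\text{for every }U\subseteq B.
\]
The set on the left is exactly $\operatorname{cl}_\cm(U)\setminus U$, and Edmonds' covering formula yields $|\operatorname{cl}_\cm(U)|\le k\cdot\rank_\cm(U)=k|U|$, so the Hall condition holds comfortably and a balanced $\phi$ exists.

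Making the resulting partition non-rainbow on every $\cm$-circuit is the hard step. Fundamental circuits $C(v,B)$ are automatically non-rainbow since $\{v,\phi(v)\}\subseteq Q_{\phi(v)}\cap C(v,B)$, but non-fundamental circuits can slip through: for $\cm=M(K_4)$ with $B$ the star at vertex $1$, a balanced $\phi$ that distributes the three non-basis edges $23,24,34$ one-to-one into $B$ leaves the triangle $\{23,24,34\}$ rainbow. To cope with this I would strengthen the feasibility problem to: find $\phi$ that is balanced \emph{and} satisfies, for every $\cm$-circuit $C$ with $|C\setminus B|\ge 2$, that some two elements of $C\setminus B$ share an image in $B$. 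In the language of Section~2, this is an $(\cm',\mathcal{D})$-marriage problem, where $\cm'$ is the transversal matroid on $V\setminus B$ encoding the capacity bounds on $\phi^{-1}(b)$, and $\mathcal{D}$ is the down-closed complex of assignments that are non-rainbow on every $\cm$-circuit.

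The step I expect to be the genuine obstacle is controlling the connectivity of $\mathcal{D}$. To apply Theorem~\ref{matcomp} one needs $\eta(\mathcal{D}[S])\ge\rank_{\cm'\!.S}$ for every $S$, and the non-rainbow constraints couple non-locally across the circuit space of $\cm$, mixing fundamental circuits with those generated from them by strong circuit elimination; a direct Leray-type estimate appears out of reach. A sensible warm-up is to prove the conjecture first on strongly base-orderable matroids, where symmetric exchange tames the circuit interactions and the non-rainbow conditions decompose along fundamental circuits, and then to attempt a reduction of the general case via a truncation-and-splitting argument on $\cm$.
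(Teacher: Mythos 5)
You are attempting to prove what the paper explicitly labels a \emph{conjecture}, attributed to B\'erczi, Schwarcz, and Yamaguchi (their Conjecture~1.10). The paper offers no proof of it, only the remark that Theorem~\ref{2factor} would follow from it via K\"onig's edge-colouring theorem; so there is no argument here to compare yours against, and your proposal, which you yourself leave unfinished, does not close it either.

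On the content of what you wrote: the reformulation is correct --- $\cp\subseteq\cm$ if and only if every $\cm$-circuit meets some part of $\cp$ in at least two elements, and $\rho(\cp)=\max_i|Q_i|$. The Hall step is also correct, and in fact gives more than you use: your bound $|\operatorname{cl}_\cm(U)\setminus U|\le(k-1)|U|$ would let you take capacities $k-1$ rather than $2k-1$, so a balanced $\phi$ with $|Q_b|\le k$ always exists. That this is \emph{stronger} than what the conjecture claims is itself a warning sign that the fundamental-circuit-only construction cannot be right, since B\'erczi et al.\ show the factor $2$ is necessary; and indeed, as your $M(K_4)$ example illustrates, a balanced $\phi$ has no reason to make non-fundamental circuits non-rainbow. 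You identify this gap honestly, and your proposed repair --- formulate the non-rainbow requirements as a complex $\mathcal{D}$ and invoke Theorem~\ref{matcomp} --- is only a plan: you give no bound on $\eta(\mathcal{D}[S])$ and concede that ``a direct Leray-type estimate appears out of reach.'' Until that estimate (or some replacement for it) is supplied, nothing is proved. The suggestion to warm up on strongly base-orderable matroids is reasonable, but it is a research programme, not a proof, and it should be presented as such rather than as progress toward the general statement.
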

 
 Theorem \ref{2factor} will follow from this conjecture by K\"onig's edge-coloring theorem.

Scrambling takes us to a happy hunting ground, with an abundance of problems. Can we prove the existence of two disjoint $(\cm,\cp)$-marriages in the scrambled setting of Rota? In the original setting of Rota's conjecture  
it is known \cite{halfrota} that there exist $\frac{n}{2} $ disjoint such marriages.


Matchings are independent sets in line graphs, and our problems can be generalized, to independent rainbow sets in general graphs. A  well-known conjecture on such sets (see, e.g.,  \cite{hax}) is:

\begin{conjecture}\label{strongcoloring}
Let $G$ a graph with maximal degree $d$, and let $\cs$ be a partition of $V(G)$ into sets of size $2d$. Then $V(G)$ can be covered by $2d$ independent rainbow sets. 
\end{conjecture}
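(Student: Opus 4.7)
The plan is to use the topological Hall framework of Theorem \ref{matcomp}, applied iteratively to the independence complex $\I(G)$. Let $\cm_\cs$ be the partition matroid whose blocks are the parts of $\cs$; an independent rainbow transversal of $\cs$ is exactly an $(\cm_\cs,\I(G))$-marriage (thinned to a base). To produce one, observe that Theorem \ref{matcomp} demands $\eta(\I(G[S])) \ge |\{P \in \cs : P \subseteq S\}|$ for every union $S$ of parts, and Meshulam's connectivity bound for independence complexes delivers exactly this whenever each part has size at least $2\Delta(G[S])$, which holds since parts have size $2d \ge 2\Delta(G)$. This yields an initial independent rainbow transversal $R_1$ (this is Haxell's theorem).

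Step two is iteration: apply the same argument to $G - R_1$ with parts of size $2d - 1$, obtaining $R_2$, and so on. Ideally, each $R_i$ strips one vertex from each part and one unit of degree as well. After $i$ rounds, parts have size $2d - i$, and the topological Hall hypothesis must persist on $G_i := G - (R_1 \cup \cdots \cup R_i)$. The snag is that $\Delta(G_i)$ need not decrease under an arbitrary choice of $R_j$: a vertex untouched by every transversal retains its full degree, so the ratio of part-size to max-degree falls below $2$ after a single step. Each $R_j$ must therefore be chosen adaptively to contain, from each part, a vertex of currently-maximum local degree (or, more sophisticatedly, a ``heavy'' vertex in an appropriate potential-function sense), forcing $\Delta(G_i)$ to drop roughly in step with $i$.

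The main obstacle, and the reason the conjecture has remained open, is that every such adaptive iteration loses a constant factor, giving only covers of size roughly $3d$ (essentially Haxell's known bound). To hit $2d$ on the nose, I would turn to a global, non-iterative construction. One avenue is to define a single complex $\Gamma$ whose top-dimensional faces encode entire packings of $2d$ disjoint independent rainbow transversals, and to prove $\Gamma$ is non-empty by dualising via Theorem \ref{alexander} and invoking the Kalai--Meshulam bound of Theorem \ref{km} applied to a carefully-chosen ``deficiency'' complex on $V(G)$. A complementary route is to combine matroid approximation in the spirit of Conjecture \ref{partitionapproximation} with K\"onig's edge-colouring theorem, replacing the unruly $\I(G)$ by a nearby partition matroid and reducing the strong colouring statement to a colouring problem on that approximant. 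The hard part in either route is producing a sharp topological or matroidal inequality that improves the factor from $3$ to $2$: that is precisely the long-standing gap which this conjecture embodies.
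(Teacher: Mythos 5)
This statement is an open conjecture in the paper (the strong chromatic number conjecture, attributed to the folklore surrounding \cite{hax}); the paper offers no proof of it, and neither do you. You are correct that a single application of topological Hall / Haxell's theorem produces one independent rainbow transversal when parts have size $\ge 2\Delta(G)$, and you correctly identify the obstruction to naive iteration: removing a transversal shrinks each part by one but need not reduce the maximum degree, so the ratio of part-size to degree degrades immediately. You are also right that known arguments in this vein bottom out around $3\Delta$ rather than $2\Delta$, and that closing the gap would require a genuinely new global inequality rather than an iterated local one. So your text is an honest and accurate assessment of why the conjecture is hard, not a proof of it --- which is the correct posture here, since there is nothing in the paper to recover. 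If you want to turn this into a contribution rather than a survey of obstacles, the most promising pressure point you mention is the last one: approximating the independence complex by a partition matroid in the sense of Conjecture \ref{partitionapproximation} and invoking K\"onig, but note that even there the known approximation loses a factor of $2$, which stacked with the $2\Delta$ target would again overshoot.
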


 In \cite{rainbowindependent} the following generic question was studied: given a class of graphs, how many independent sets of size $n$ do you need to procure a rainbow independent set of size $m$?
  The following conjecture was posed there, with some partial results:
  
  \begin{conjecture}\label{degreek}
  $\left\lceil \frac{k+1}{n-m+2} \right\rceil(m-1) +1$
  independent sets of size $n$ in a graph with maximal degree $k$ have a rainbow independent set of size $m$. 
  \end{conjecture}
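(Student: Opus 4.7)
The plan is to proceed by induction on $m$, with the base case $m=1$ immediate: a single family of size $n\ge 1$ yields a rainbow independent set of size $1$. For the inductive step, set $t:=\lceil (k+1)/(n-m+2)\rceil$ and let $N=(m-1)t+1$ independent sets $\cf_1,\ldots,\cf_N$ of size $n$ be given in a graph $G$ with maximum degree $k$. Since $\lceil (k+1)/(n-m+3)\rceil\le t$, the inductive hypothesis applied to any $(m-2)t+1$ of the families produces a rainbow independent set of size $m-1$. Suppose toward contradiction that no rainbow independent set of size $m$ exists, and let $R=\{r_1,\ldots,r_{m-1}\}$ be a maximum one, with $r_i\in\cf_i$ after relabeling. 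Maximality of $R$ forces every unused family $\cf_j$ (for $j>m-1$) to satisfy $\cf_j\subseteq N[R]$, since any vertex in $\cf_j\setminus N[R]$ would extend $R$.

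The core idea is a swap-and-double-count step. For each $i\in[m-1]$ define $P_i:=\cf_i\setminus N[R\setminus\{r_i\}]$, an independent subset of $\cf_i$ containing $r_i$. For any $v\in P_i$ the set $R_v:=(R\setminus\{r_i\})\cup\{v\}$ is an alternative maximum rainbow independent set of size $m-1$, which forces $\cf_j\subseteq N[R_v]=N[R\setminus\{r_i\}]\cup N[v]$ for every unused $\cf_j$. Equivalently, $\cf_j\setminus N[R\setminus\{r_i\}]\subseteq\bigcap_{v\in P_i}N[v]$. Since $P_i$ is independent and $G$ has maximum degree $k$, any two distinct $v,v'\in P_i$ satisfy $N[v]\cap N[v']=N(v)\cap N(v')$, and any common neighbor of $|P_i|$ pairwise non-adjacent vertices has degree at least $|P_i|$; in particular the intersection is empty when $|P_i|\ge k+1$. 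Since $|P_i|\ge n-(m-2)(k+1)$, this already forces $\cf_j\subseteq N[R\setminus\{r_i\}]$ for every $i$ whenever $n$ is large enough that all $|P_i|\ge k+1$, and a simple degree count on vertices adjacent to at least two elements of $R$ then yields $n\le (m-1)k/2$, a contradiction in that regime.

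For the remaining range $n<(m-1)(k+1)$, I would double-count triples $(i,v,w)$ with $v\in P_i\setminus\{r_i\}$, $w\in\cf_j\setminus N[R\setminus\{r_i\}]$ for some unused $j$, and $w\in N[v]$. Each $w$ contributes at most $k+1$ triples for a fixed $i$, while the swap bound above produces upper bounds on $|\cf_j\setminus N[R\setminus\{r_i\}]|$ depending on $|P_i|$. The hypothesis $(n-m+2)t\ge k+1$ is to emerge as the exact inequality that collides the two sides of this count once the number of unused families reaches $(t-1)(m-1)+1$.

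The main obstacle is calibrating this count to the conjectured denominator $n-m+2$: the naive bound $|P_i|\ge n-(m-2)(k+1)$ is weaker by roughly a factor of $k$, and recovering the correct slack seems to require choosing $R$ not merely maximum but minimizing an auxiliary potential such as $\sum_i|\cf_i\cap N[R\setminus\{r_i\}]|$, or exploiting the fact that every swap yields a new rainbow set $R_v$ to which the argument can be re-applied iteratively. A cleaner alternative that bypasses the combinatorial accounting entirely would be to bound the homotopic connectivity $\eta$ of the independence complex of $G[N[R]]$ via a Meshulam-style estimate for bounded-degree graphs and then invoke Topological Hall (Theorem \ref{matcomp}) with the partition matroid whose parts are the unused families, converting the problem into a quantitative connectivity statement that matches the conjectural bound.
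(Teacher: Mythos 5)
This statement is a \emph{conjecture}, not a theorem: the paper presents it as open, notes explicitly that it ``is open even for $k=2$,'' and records only partial progress (namely that $k(m-1)+1$ families suffice when $m \le n$, which is roughly a factor of $(n-m+2)/2$ weaker than the conjectured bound). There is no proof in the paper for you to match against, so the question is whether your sketch constitutes a proof on its own terms. It does not, and you say so yourself: you flag that the naive lower bound $|P_i| \ge n - (m-2)(k+1)$ is off ``by roughly a factor of $k$'' from what the calibration needs, and you defer the fix to an unconstructed potential-function refinement or an unproven Meshulam-type connectivity estimate.

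That missing factor of $k$ is not a technical loose end; it is the entire content of the conjecture. The argument you outline — maximal rainbow set $R$, swap each $r_i$ for $v \in P_i$, intersect the resulting neighborhood constraints, double-count — is essentially the scheme that already yields the known weaker bound $k(m-1)+1$. To reach $\lceil (k+1)/(n-m+2)\rceil(m-1)+1$ one must somehow amortize the $k+1$ neighborhood cost across $n-m+2$ independent choices per color class, and nothing in your intersection step $\cf_j \setminus N[R\setminus\{r_i\}] \subseteq \bigcap_{v\in P_i} N[v]$ gets that amortization: the intersection can be nonempty up to $|P_i| \le k$, which is exactly the regime where the conjecture is hard. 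Also, your claim that the inductive hypothesis (with denominator $n-m+3$) applies uses $\lceil(k+1)/(n-m+3)\rceil \le t$, which is fine, but the induction alone cannot carry the day — the base case plus this reduction only gives a weaker global bound unless the single inductive step is tight, and your step is not. A further soft spot: when $|P_i| \ge k+1$ you deduce $\cf_j \subseteq N[R\setminus\{r_i\}]$ for all unused $j$ and then argue ``a simple degree count on vertices adjacent to at least two elements of $R$ then yields $n \le (m-1)k/2$, a contradiction''; this count is not written out, and the conclusion does not follow as stated since $\cf_j \subseteq N[R\setminus\{r_i\}]$ for every $i$ constrains $\cf_j$, not $n$. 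In short, this is a reasonable survey of plausible attack lines on an open problem, not a proof.
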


Conjecture \ref{degreek} is open  even  for $k=2$, in which case independent sets can be viewed as matchings - taking us back to rainbow matchings. The special case $k=2$,
 $m=n$
 says that $n$ independent sets of size $n$ in the disjoint union of cycles and paths have a rainbow independent set of size $n-1$. Switching from $m=n-1$ to $m=n$ causes the expression $\lceil \frac{k+1}{n-m+2}\rceil
 $ to jump from 
$\lceil \frac{3}{3} \rceil=1$ to $\lceil \frac{3}{2}\rceil =2$.
This is one point of view from which to explain the jump from $n$ to $2n-1$ matchings of size $n$,
when the desired size of the rainbow matching goes from $n-1$ to $n$.

 In \cite{rainbowindependent} examples were given to show that,  if true,  the conjecture is sharp. Also ``half'' of the conjecture is proved -  
  if  $m \le n$ then $k(m-1)+1$ independent sets of size $n$ in a graph of maximal degree $k$ have a rainbow independent set of size $m$.

\section{Rainbow  cycles and rainbow  spanning sets}

We next turn to problems of algebraic nature. 
 The following fact  can be proved using Theorem \ref{km}, but it also has an easy direct proof:

\begin{observation}
If $rank(\C) \le n$ then any collection  of $n+1$ sets not belonging to $\C$ has a rainbow set not belonging to $\C$.
\end{observation}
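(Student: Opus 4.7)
My plan is to give a short direct proof by contradiction, avoiding the topological machinery of Theorem \ref{km}. Assume that every partial rainbow set drawn from $\{S_1,\dots,S_{n+1}\}$ belongs to $\C$, and among all such rainbow sets fix one, $R$, of maximum size. Because $R\in\C$ and $\text{rank}(\C)\le n$, we have $|R|\le n$, so the choice function producing $R$ is not defined on at least one of the input sets; call it $S_j$.

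Next I would split into two cases based on whether $S_j\setminus R$ is empty. If $S_j\subseteq R$, then $R\in\C$ together with the downward closure of $\C$ forces $S_j\in\C$, contradicting the hypothesis $S_j\notin\C$. Otherwise pick any $x\in S_j\setminus R$ and take it as the representative of $S_j$: this enlarges $R$ to an injective rainbow set $R\cup\{x\}$ of strictly greater size, contradicting the maximality of $R$. In either case we reach a contradiction, so some rainbow set must lie outside $\C$.

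There is essentially no obstacle beyond keeping straight the convention that a rainbow set is \emph{partial} — the choice function need not be defined on every $S_i$ — which is what makes the maximality argument go through cleanly. Once that is in place, the proof uses only the downward closure of $\C$ and the bound $\text{rank}(\C)\le n$, confirming the author's remark that the observation, though also derivable from Theorem \ref{km}, admits an elementary combinatorial proof.
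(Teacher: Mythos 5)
Your proof is correct, and since the paper merely asserts that the observation ``has an easy direct proof'' without writing one out, your argument is exactly the kind of elementary greedy/maximality argument the authors appear to have in mind. The two cases are handled cleanly: if $S_j\subseteq R$ then downward closure of $\C$ forces $S_j\in\C$, a contradiction; otherwise an unrepresented element of $S_j$ extends $R$ injectively, contradicting maximality. One could phrase the same idea constructively rather than by contradiction (greedily build a rainbow set, stopping either when it reaches size $n+1$ and is therefore outside $\C$ by the rank bound, or when some unrepresented $S_j$ is entirely contained in it and forces the current set outside $\C$), but this is a cosmetic variation, not a genuinely different route.
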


A bit less trivial: 
\begin{theorem}\label{main}\label{odd}\label{rainbowodd}
   Any family $\A=(A_1, \ldots ,A_n)$ of edge sets of odd cycles on a set of size $n$ has a rainbow odd cycle. \end{theorem}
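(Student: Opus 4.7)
The plan is to derive the statement from the Kalai--Meshulam theorem (Theorem~\ref{km}) applied to the simplicial complex
$$\C := \{F \subseteq E(K_n) : F \text{ is bipartite}\}$$
on the ground set $E(K_n)$. Each odd cycle $A_i$ is non-bipartite, so $A_i \in \C^c$. If one can establish the Leray bound $\blambda(\C) \le n-1$, then Theorem~\ref{km} (with $d = n-1$) produces a partial injective rainbow set $R$---distinct edges $e_i \in A_i$ for $i$ in some $J \subseteq [n]$---which itself lies in $\C^c$, i.e., is non-bipartite. Any odd cycle $C \subseteq R$ is then a rainbow odd cycle, giving the theorem.

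To prove $\blambda(\C) \le n-1$, I would use a Nerve Lemma argument. For each bipartition $P = \{U, V \setminus U\}$ of the vertex set $V$ of $K_n$, let $\C_P$ be the full simplex on the crossing edges $E(U, V \setminus U)$. Then $\C = \bigcup_P \C_P$, and any intersection $\bigcap_j \C_{P_j}$ is again a simplex---supported on the edges crossing every $P_j$---so is contractible when nonempty. The Nerve Lemma thus gives $\C[S] \simeq N(S)$ for every $S \subseteq E(K_n)$, where the nerve $N(S)$ has as vertices those bipartitions with a crossing edge in $S$, and $\{P_1, \dots, P_k\}$ is a simplex iff some pair $\{u,v\}$ with $uv \in S$ is separated by every $P_j$. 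Equivalently, $N(S) = \bigcup_{uv \in S} \tau_{uv}$, where $\tau_{uv}$ is the full simplex on bipartitions separating $u$ from $v$.

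The main obstacle is to upgrade this description to $\lambda(N(S)) \le n-1$ for every $S$. The key combinatorial input is that bipartitions of an $n$-element vertex set form an $(n-1)$-dimensional affine $\bF_2$-space: a $2$-coloring $c \colon V \to \bF_2$ modulo global complementation is specified by $n-1$ bits. This structure should force $\tilde{H}_i(N(S)) = 0$ for $i \ge n-1$; a natural route is a discrete Morse matching guided by the pair labels of maximal simplices, or an inductive collapsing peeling off one vertex at a time. The execution of this topological step is the technical heart of the argument, and once it is carried out Theorem~\ref{km} delivers the rainbow odd cycle.
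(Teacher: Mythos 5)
Your high-level framework is the right one, and it is in fact Alexander-dual to what the paper does---but the crucial step is left unexecuted, so there is a genuine gap. Concretely: you correctly identify that the whole theorem reduces to the single Leray bound $\blambda(\C)\le n-1$ for the complex $\C$ of bipartite edge sets, and the Nerve Lemma decomposition of $\C$ by the simplices $\C_P$ on crossing edges is valid (all intersections are simplices, and the same works for every induced subcomplex $\C[S]$). But the last paragraph stops exactly where the mathematics begins. ``A discrete Morse matching guided by the pair labels'' and ``an inductive collapsing peeling off one vertex at a time'' are names of strategies, not arguments, and it is not at all clear that either succeeds on $N(S)$ without substantial new ideas; already for $n=3$ the nerve of the three odd bipartitions is a $6$-cycle realizing $\lambda=2=n-1$, so the bound is tight and nothing can be thrown away casually. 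As written, the proof does not exist.

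The paper avoids this topological computation entirely by cashing in, right away, the $\mathbb F_2$-structure that you only gesture at. Lemma \ref{l.bipartitevectorspanning} encodes non-bipartiteness algebraically: a set $F$ of edges on $n$ vertices is non-bipartite if and only if $(\vec 0,1)\in\operatorname{span}_{\mathbb F_2}\{(\chi_e,1):e\in F\}$. Under this encoding your complex $\C$ becomes the induced subcomplex, on the edge-vectors, of the complex $\{A:(\vec 0,1)\notin\operatorname{span}_\M(A)\}$ in the rank-$n$ binary matroid $\M$ spanned by the $(\chi_e,1)$. The theorem is then the immediate special case of Theorem \ref{spanning} (``any $n$ sets spanning a fixed element $v$ of a rank-$n$ matroid have a rainbow set spanning $v$''), which is exactly the Kalai--Meshulam/Rado-type statement you are after but stated in the form where the Leray bound is standard---the ``not spanning a fixed element'' complex of a rank-$n$ matroid is $(n-1)$-collapsible. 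So the paper's route and yours are two views of the same duality (Theorem \ref{alexander} makes this precise), but the algebraic reformulation replaces the unproved nerve computation with a one-line reduction. If you want to salvage your version, the honest path is to prove $\blambda(\C)\le n-1$ directly via that algebraic reformulation: observe that the bipartitions form an affine $\mathbb F_2^{\,n-1}$ as you say, identify each $\tau_{uv}$ with an affine hyperplane there, and then establish $(n-1)$-collapsibility of the resulting complex of hyperplane-subsets with a common point---but that is precisely the collapsibility argument underlying Theorem \ref{spanning}, and routing it through the Nerve Lemma buys you nothing.
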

    
     This is a special case of a more general result:
    
    \begin{theorem}\label{spanning}
Let $\M$ be a matroid of rank $n$ and let $v\in V(\M)$. Any collection  $A_1, \ldots ,A_n$ of sets spanning $v$ has a rainbow set spanning $v$.
\end{theorem}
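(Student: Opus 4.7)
The plan is to proceed by induction on the rank $n$ of $\M$. The base case $n = 1$ is immediate: since $A_1$ spans $v$ in a rank-$1$ matroid, either $v$ is a loop and any singleton from $A_1$ suffices, or $A_1$ contains a non-loop $a_1$ with $\text{cl}_\M(\{a_1\}) = V(\M) \ni v$, so $\{a_1\}$ is the required rainbow set.

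For the inductive step with $n \ge 2$, I would pick a suitable element $a \in A_n$, contract it in $\M$, and apply the inductive hypothesis in $\M/a$ (of rank $n-1$) to the $n-1$ sets $A_1 \setminus \{a\}, \ldots, A_{n-1} \setminus \{a\}$. The key calculation is the standard contraction identity $\text{cl}_{\M/a}(S) = \text{cl}_\M(S \cup \{a\}) \setminus \{a\}$ for $S \subseteq V(\M) \setminus \{a\}$; combined with $v \in \text{cl}_\M(A_i)$ and $v \neq a$, this shows that each $A_i \setminus \{a\}$ still spans $v$ in $\M/a$. Induction then yields a rainbow set $T$ spanning $v$ in $\M/a$, and $T \cup \{a\}$ is a rainbow set from $A_1, \ldots, A_n$ that spans $v$ in $\M$, since $v \in \text{cl}_{\M/a}(T) \subseteq \text{cl}_\M(T \cup \{a\})$.

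The main obstacle is choosing $a$ so that the contraction is substantive. To handle this I would first dispose of a trivial case: if some $A_i$ contains an element $e$ with $v \in \text{cl}_\M(\{e\})$ --- that is, $e = v$, $e$ is parallel to $v$, or $v$ is a loop --- then $\{e\}$ alone is already a rainbow set spanning $v$. Outside this case, $v$ is not a loop and no $A_i$ meets the parallel class of $v$, so any non-loop $a \in A_n$ (which exists because $A_n$ spans the non-loop $v$) is automatically not parallel to $v$. Contracting such an $a$ then drops the rank by exactly one, keeps $v$ a non-loop of $\M/a$, and leaves each $A_i \setminus \{a\}$ non-empty, so the inductive hypothesis applies cleanly.

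An alternative, more uniform approach would be to apply the Kalai--Meshulam Theorem \ref{km} to the complex $\C = \{S \subseteq V(\M) : v \notin \text{cl}_\M(S)\}$ of non-$v$-spanning sets, reducing the theorem to the bound $\blambda(\C) \le n - 1$. Via Alexander duality (Theorem \ref{alexander}) one can attack this bound using the cover $\C = \bigcup_H 2^H$ indexed by hyperplanes of $\M$ not containing $v$, but controlling the higher-dimensional homology of the associated nerve is more delicate than the direct induction above, so I would favour the latter route.
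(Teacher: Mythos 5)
Your proof is correct, and it takes a genuinely different route from the paper. The paper does not prove Theorem \ref{spanning} in isolation; it derives it in Section \ref{coopsection} as the special case (with $T=\{v\}$ and every $J$ satisfying the spanning alternative) of a cooperative generalization, whose proof is a single application of Rado's theorem (Theorem \ref{rado}): if $\rk(\A_J)\ge|J|$ for every $J\subseteq[n]$ then Rado already yields a rainbow base; otherwise one fixes a minimal $J$ with $\rk(\A_J)<|J|$, applies Rado to $J\setminus\{j\}$ to obtain a rainbow independent set $R$, and uses the rank equality $\rk(R)=\rk(\A_J)=|J|-1$ together with $R\subseteq\A_J$ to conclude $\vspan(R)=\vspan(\A_J)\ni v$. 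Your argument instead inducts directly on the rank: after disposing of the degenerate case in which $v\in\vspan(\{e\})$ for some $e\in\bigcup A_i$, you contract a non-loop $a\in A_n$, invoke $\vspan_{\M/a}(S)=\vspan_\M(S\cup\{a\})\setminus\{a\}$ to check that each $A_i\setminus\{a\}$ still spans $v$ in $\M/a$, and lift the inductive rainbow set $T$ to $T\cup\{a\}$. Your route is more elementary and self-contained, using neither Rado nor any Hall-type rank condition, and it keeps explicit track of the loop and parallel-class degeneracies; the cost is that it does not deliver the cooperative strengthening as a byproduct, which the paper's Rado-based argument does for essentially the same effort. The Kalai--Meshulam/Alexander-duality alternative you sketch would also succeed and would establish the stronger structural fact that the complex of non-$v$-spanning sets is $(n-1)$-Leray, but, as you note, that is heavier machinery than the statement requires.
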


 Theorem \ref{odd} follows from Theorem \ref{spanning} and the following lemma: 

\begin{lemma}\label{l.bipartitevectorspanning}
$G$ is bipartite if and only if  $$(\vec{0},1) \not \in \text{span}(\{(\chi_e,1) \mid e \in E(G)\})$$ 
\end{lemma}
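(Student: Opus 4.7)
The plan is to work in $\mathbb{F}_2$, so that $\chi_e \in \mathbb{F}_2^{V(G)}$ is the characteristic vector of the two endpoints of $e$, and to identify the $\mathbb{F}_2$-span of $\{(\chi_e,1) : e \in E(G)\}$ with subsets of edges. Concretely, a vector in the span corresponds to a subset $S \subseteq E(G)$ via the sum
\[
\sum_{e \in S} (\chi_e, 1) \;=\; \bigl(\vec{d}_S,\; |S| \bmod 2\bigr),
\]
where the $v$-coordinate of $\vec{d}_S$ is the parity of the degree of $v$ in the subgraph $(V, S)$. Hence $(\vec{0},1) \in \text{span}(\{(\chi_e,1) : e \in E(G)\})$ if and only if there exists $S \subseteq E(G)$ such that every vertex has even degree in $S$ and $|S|$ is odd.

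For the ``$\Rightarrow$'' direction (in the contrapositive), suppose $G$ is not bipartite and let $C$ be an odd cycle in $G$. Taking $S = E(C)$ produces an Eulerian edge set of odd size, so $(\vec{0},1)$ lies in the span. For the ``$\Leftarrow$'' direction (again contrapositive), suppose some $S$ has all even degrees and $|S|$ odd. Then the subgraph $(V, S)$ decomposes into edge-disjoint cycles $C_1, \ldots, C_k$ (by the standard argument: repeatedly extract a cycle from any Eulerian subgraph). Since $|S| = \sum_i |E(C_i)|$ is odd, at least one $C_i$ is an odd cycle, so $G$ is not bipartite.

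The steps are all short and standard; the only substantive point is the cycle-decomposition of even-degree subgraphs, which is the usual observation that a graph in which every vertex has even degree is an edge-disjoint union of cycles. Since both implications reduce to this single fact, I do not anticipate any real obstacle.
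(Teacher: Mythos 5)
Your proof is correct. You establish both contrapositives by working in the cycle space: a linear combination summing to $(\vec{0},1)$ is the same thing as an even-degree edge set of odd cardinality, and a cycle decomposition of any such set forces an odd cycle; conversely an odd cycle directly supplies such a set. The paper does not write out a proof of the lemma, but its surrounding remarks (``the condition is just that of affine spanning'' and item (a) in the algebraic-view section) point to the dual, cut-space route: $G$ is bipartite iff there is a bipartition vector $\vec{u}\in\mathbb{F}_2^n$ with $\chi_e\cdot\vec{u}=1$ for every edge $e$, which is exactly the existence of a vector $(\vec{u},1)$ orthogonal to every $(\chi_e,1)$ but not to $(\vec{0},1)$, hence a certificate that $(\vec{0},1)$ is outside the span. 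The two arguments are the two faces of the usual cycle-space/cut-space duality over $\mathbb{F}_2$: your primal version exhibits an odd Eulerian subgraph as the witness of membership, the paper's dual version exhibits an orthogonal 2-coloring as the witness of non-membership. Both are short and standard; yours has the small advantage of making the relevant linear combination completely explicit, which fits well with the way the lemma is actually used (feeding $(\chi_e,1)$ into the linear matroid in the proof of Theorem \ref{odd}).
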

Here the vectors are taken in $\mathbb{F}_2^n$. Note that the condition is just that of affine spanning.

To prove Theorem \ref{odd}, take the matroid $\M$ in Theorem \ref{spanning} to be the linear matroid over $\mathbb{F}_2$ and replace every edge $e$ by the vector $(\chi_e,1)$.

For $n$ odd, a Hamilton cycle on $n$ vertices, taken $n-1$ times, shows that Theorem \ref{odd} is sharp, namely  $n-1$ odd cycles do not suffice. It was shown in \cite{abhj} that the only examples showing sharpness are cacti-like graphs, generalizing the above example.

One of the best-known 
conjectures in graph theory  is the Caccetta - H\"aggkvist conjecture:

\begin{conjecture}\cite{ch}\label{kh}
In a digraph on $n$ vertices and minimum out-degree at least $\frac{n}{r}$ there is a directed cycle of length $r$ or less.
\end{conjecture}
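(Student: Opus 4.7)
The Caccetta--H\"aggkvist conjecture is one of the most resistant open problems in digraph theory, so my ``proof plan'' is necessarily a strategy rather than a complete argument. In the spirit of this survey, I would first recast the problem as a rainbow question. For a digraph $D$ on $[n]$ with $\delta^+(D)\ge n/r$, associate to each vertex $v$ its out-neighbourhood $S_v = N^+(v)$, a set of size at least $n/r$. A directed cycle $v_0\to v_1\to\cdots\to v_k=v_0$ with $k\le r$ is precisely a rainbow closed walk of length at most $r$: we pick one element from each $S_{v_i}$ (indexed by the vertex on the cycle) subject to the consecutive-incidence constraints. Thus the conjecture asks for a rainbow structure from the family $\{S_v\}_{v\in V}$, and invites comparison with Theorem \ref{spanning} and Theorem \ref{km}.

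The first concrete step is to iterate the out-neighbourhood operator. If no short closed walk exists from $v$, then $v,N^+(v),N^{++}(v),\ldots,N^{+(r-1)}(v)$ must all be ``spread'', for any short-cycle-free repetition collapses into a cycle of length $\le r$. One can then try to double-count the edges between $N^+(v)$ and $N^{++}(v)$ using $|N^+(v)|,|N^{++}(v)|\ge n/r$; this is the starting point of every classical attack on the triangle case $r=3$. For general $r$ one would want a topological variant: build the simplicial complex whose faces are sets $T\subseteq V$ such that no rainbow closed walk of length $\le r$ uses only the $\{S_v:v\in T\}$, and try to control $\eta$ or $\bar\lambda$ of this complex so that either Theorem \ref{matcomp} or Theorem \ref{km} produces the required cycle once each $|S_v|\ge n/r$.

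The main obstacle -- and the reason the conjecture has stood for nearly fifty years -- is its \emph{tightness}: the blow-up of a directed $r$-cycle, where each vertex is replaced by an independent set of size $n/r$, achieves $\delta^+=n/r-1$ with no short cycle and no directed cycle of length less than $r$. Any successful argument must be sharp at exactly this extremal configuration. All current techniques (double counting, spectral arguments on regular digraphs, flag algebras, which give the Hladk\'y--Kr\'al'--Norin bound of roughly $0.3465n$ for $r=3$, and fractional relaxations due to Shen) lose a small additive or multiplicative factor. Translating the combinatorial tightness of the blow-up examples into a topological obstruction of the right kind -- a hole of precisely the dimension that a rainbow theorem would detect -- is where I would expect a topological/matroidal attack in the style of this survey to run aground.
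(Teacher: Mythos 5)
You have correctly identified that this statement is an \emph{open conjecture}: the paper states the Caccetta--H\"aggkvist conjecture (with a citation to Caccetta and H\"aggkvist) precisely because no proof is known, and it is included in the survey only as motivation for the rainbow generalization conjectured in \cite{adh}. There is therefore no ``paper's proof'' to compare against, and your decision to present a strategy discussion rather than fabricate an argument is exactly the right call.

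Your overview of the landscape is broadly accurate: the reformulation of a short directed cycle as a rainbow structure over the family of out-neighbourhoods $\{N^+(v)\}_{v\in V}$ is indeed the bridge the survey itself draws (it motivates the undirected rainbow version stated just below the conjecture), and you correctly flag both the iterated-neighbourhood double-counting approach for $r=3$ and the flag-algebra and fractional-relaxation bounds. One small imprecision: in the blow-up of the directed $r$-cycle with parts of size $n/r$, each vertex has out-degree exactly $n/r$ (not $n/r-1$), and the digraph contains directed cycles of length exactly $r$ but none shorter. So the blow-up is a \emph{tight} example witnessing that one cannot strengthen ``length $r$ or less'' to ``length less than $r$'', rather than a near-miss example just below the degree threshold. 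Your larger point stands, though: any topological or matroidal attack in the spirit of Theorems \ref{matcomp} or \ref{km} would need to be sharp at precisely this blow-up, and at present no one knows how to build a complex whose connectivity or Lerayness is controlled tightly enough to detect the required cycle without losing a constant factor.
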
  

In \cite{adh} this was given a rainbow generalization.

\begin{conjecture}
In an undirected graph on $n$ vertices, any collection of $n$ disjoint sets of edges, each of size at least $\frac{n}{r}$, has a rainbow cycle of length $r$ or less.
\end{conjecture}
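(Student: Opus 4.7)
This conjecture is at least as hard as the classical Caccetta-Häggkvist conjecture for oriented graphs. Given an oriented graph $D$ on $[n]$ with $\delta^+(D)\ge n/r$, let $E_i = \{\{i,j\} : (i,j) \in A(D)\}$: these color classes are disjoint (no $2$-cycles), have size $\ge n/r$, and a rainbow cycle $v_1 \ldots v_k v_1$ with tail-colors $c_t \in \{v_t, v_{t+1}\}$ forming a bijection to $\{v_1,\ldots,v_k\}$ is forced to be consistently oriented, yielding a directed cycle in $D$ of length $\le r$. Since even the $r=3$ case of classical Caccetta-Häggkvist is open, I would not try for a complete proof but would aim at partial progress along three fronts.

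The first is a fractional LP relaxation, modeled on known proofs of fractional Caccetta-Häggkvist. The primal asks for non-negative weights on short rainbow cycles (length $\le r$) in $K_n$, with total weight per color at most $1$; the dual assigns weights to color-vertex pairs, and its constraints are governed by the bipartite color-vertex incidence operator whose column sums equal $|E_i|\ge n/r$. A Cauchy-Schwarz or spectral argument should yield a fractional rainbow short cycle matching the best known fractional bounds for the classical conjecture.

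The second is a topological reduction via Theorem \ref{km}. Let $\C$ be the complex of edge-sets in $K_n$ containing no cycle of length $\le r$; Kalai-Meshulam would deliver a rainbow short cycle from $n$ color classes each already containing a short cycle, provided $\blambda(\C)\le n-1$. The hypothesis does not force a short cycle inside each $E_i$, so either the theorem must be refined to a density condition, or one must first extract from each $E_i$ a subfamily that itself contains a short cycle --- which for small $r$ (notably $r=3$) is impossible without additional structure.

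The third is a direct count for $r=3$: the union $\bigcup E_i$ has $\ge n^2/3$ edges and so $\Omega(n^3)$ triangles by Moon-Moser, and a second-moment argument on the distribution of colors across those triangles may force a rainbow one. The principal obstacle common to all three routes is that the critical input --- the spectral gap on the incidence operator, the bound on $\blambda(\C)$, or the color distribution across triangles --- is essentially the conjecture itself, so without genuinely new structural input none of the three can progress past the fractional or asymptotic regime.
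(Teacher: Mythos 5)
You have correctly identified that this statement is a conjecture, not a theorem: the paper does not (and cannot, at present) prove it, merely stating it as the rainbow generalization of Caccetta--H\"aggkvist proposed in \cite{adh} and noting that \cite{sparse} settles only the $r=2$ case while the original directed conjecture is known up to $r=5$. Your reduction showing that the rainbow conjecture implies Caccetta--H\"aggkvist for oriented graphs is correct and worth spelling out: with $E_i=\{\{i,j\}:(i,j)\in A(D)\}$ the classes are pairwise disjoint exactly because $D$ is oriented, each has size $\delta^+(D)\ge n/r$, and in a rainbow cycle $v_1\cdots v_k v_1$ the map sending each edge $\{v_t,v_{t+1}\}$ to its color is an injection into the $k$ cycle vertices that hits each edge's own endpoints, hence one of the two perfect matchings of the cycle's edge--vertex incidence graph, hence a coherent orientation giving a directed cycle of length $\le r$. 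This matches the paper's framing of the conjecture as strictly stronger than Caccetta--H\"aggkvist. Your three proposed attack routes are sensible as heuristics, but as you yourself observe they each beg the question at the critical step, so none constitutes a proof; since the paper offers no proof either, there is nothing further to compare. One small addition you might make: for $r=2$ the conjecture reduces to showing $n$ disjoint edge sets of size $\ge n/2$ on $n$ vertices yield a rainbow digon or rainbow path of length $2$, and this is exactly what \cite{sparse} establishes, so your direct-counting program could be calibrated against that known base case before attempting $r=3$.
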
 

In \cite{sparse} the conjecture is proved for $r=2$, while the original C-H conjecture is known up to $r=5$ - so there is a lot of room for progress.

    \section{ Rainbow paths in networks and their relation to rainbow matchings}
    
   A {\em network} is a triple $(D,S,T)$, where $D$ is a digraph on a vertex set $V$, and $S,T$ are disjoint sets, of  ``sources''  and ``targets'', respectively  It is assumed that no edge goes into $s$, and no edge goes out of $t$.

   In \cite{akz}
   the following result was proved, as a step towards proving Theorem \ref{drisko}: 
    
    \begin{theorem}\label{networks}
    If $|V(D)\setminus (S \cup T)|=n$ then any family $\cf$ of $n+1$   $S-T$-paths has a rainbow $S-T$ path. 
    \end{theorem}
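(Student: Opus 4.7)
The plan is to prove Theorem \ref{networks} by induction on the number $n$ of internal vertices. The base case $n=0$ is immediate: the network axioms (no edges into $S$, none out of $T$) force every $S$-$T$-path to be a single edge, so the lone path in $\cf$ is itself rainbow.

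For the inductive step, I would isolate one path of $\cf$ arbitrarily, say $P_{n+1}$, and examine its first edge $e=sv$ with $s\in S$. If $v\in T$ then $e$ alone gives the rainbow path and we are done, so we may assume $v$ is internal. The key reduction is to promote $v$ to a source: form $D'$ from $D$ by deleting all edges that enter $v$, and set $S'=S\cup\{v\}$, $T'=T$. Now $D'$ has $n-1$ internal vertices, and for each $i\le n$ the path $P_i$ contributes an $S'$-$T'$-path $P_i'$ in $D'$, namely $P_i$ itself if $v\notin V(P_i)$, and otherwise the suffix of $P_i$ starting at $v$. This gives $n$ paths in $D'$ to which the induction hypothesis applies.

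The induction yields a rainbow $S'$-$T'$-path $Q$ drawn from distinct $P_i'$. If $Q$ starts in $S$, it is already the desired rainbow $S$-$T$-path in $D$ (using only $P_1,\dots,P_n$). Otherwise $Q$ starts at $v$, in which case we prepend $e$ to obtain a walk $s\to v\to\cdots\to t$ in $D$; since $s\in S$ has no incoming edges, it cannot reappear on $Q$, so the concatenation is in fact a simple path, and it is rainbow because $e$ comes from $P_{n+1}$ while $Q$ drew only from $P_1,\dots,P_n$.

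The one subtle point I anticipate is preserving the network axioms when passing from $D$ to $D'$: deleting the edges entering $v$ is forced, and then one must verify that each $P_i$ with $i\le n$ still supplies a usable $S'$-$T'$-path in $D'$, which is exactly why truncating at $v$ is the correct operation. Beyond this bookkeeping, the argument closes with no further case analysis, and the sharpness bound $n+1$ arises naturally because exactly one path is ``spent'' in each inductive step.
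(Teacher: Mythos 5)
Your proof is correct, and it is a genuinely different argument from anything the paper presents. The paper cites \cite{akz} for this statement and does not re-prove it directly; instead it derives it as the case $p=1$ of the stronger Theorem~\ref{manypaths}, whose proof passes to the auxiliary bipartite graph $B(\N)$, applies the map $\phi$ sending linearish arborescences to matchings, and then invokes the topological Theorem~\ref{stairs}. The paper also proves a weighted strengthening, Theorem~\ref{weightednetworks}, by a greedy, Dijkstra-flavored tree-growing argument. Your vertex-peeling induction --- commit to the first edge $sv$ of one distinguished path, promote $v$ to a source by deleting its in-edges, truncate the remaining $n$ paths at $v$, and apply the inductive hypothesis to a network with one fewer internal vertex --- is entirely elementary and self-contained, and all the delicate points (that $Q$ starting in $S$ cannot revisit $v$ since the in-edges of $v$ were removed, that $s$ cannot reappear in $Q$ since $s$ has no in-edges, and that $P_i' \subseteq P_i$ so rainbow-ness transfers back) are handled correctly.

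The trade-off is generality. Your argument does not obviously extend to the weighted setting of Theorem~\ref{weightednetworks}: you commit to the first edge of $P_{n+1}$ blindly, with no control on weight, whereas the paper's tree construction greedily minimizes $w(T_iv)+w(e_i)$ at each step precisely so that the final path has small weight. Nor does your argument readily yield the $p>1$ case of Theorem~\ref{manypaths}, where one needs $p$ disjoint rainbow $S$--$T$ paths and the reduction to bipartite rainbow matchings does the heavy lifting. For Theorem~\ref{networks} itself, though, your proof is arguably the cleanest of the three, and has the small bonus of handling the sets $S,T$ directly without reduction to a single source and sink. One cosmetic remark: in the base case $n=0$, the reason every $S$--$T$ path is a single edge is that $V(D)=S\cup T$; the network axioms are what prevent longer walks from hiding in $S\cup T$, so both facts together give the claim, as you essentially say.
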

  The two theorems have a common generalization. 
   Given a set $\F$ of edges let $\nu^P(\cf)$ be the maximal size of a family of $S-T$ disjoint paths with edges from $\F$.

\begin{theorem}\label{manypaths}
    Let $\N$ be a network with $|V^\circ(\N)|=q$,  and let  $p$ be an integer.
Let $\F=(F_1, \ldots,F_{2p-1+q})$ be a family  of sets of edges, satisfying $\nu^P(F_i) \ge p$ for all $i \le 2p-1+q$. Then there exists an $\F$-rainbow set $R$ with $\nu^P(R) \ge p$. 
    \end{theorem}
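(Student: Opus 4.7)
The plan is to deduce the theorem from the Kalai--Meshulam theorem (Theorem \ref{km}) by bounding the Lerayness of the complex
\[\C := \bigl\{A \subseteq E(\N) : \nu^P(A) < p\bigr\}.\]
Monotonicity of $\nu^P$ makes $\C$ a simplicial complex, and the hypothesis says every $F_i$ lies in $\C^c$. Proving $\blambda(\C) \le 2p - 2 + q$ is therefore enough: Theorem \ref{km}, applied to the $2p - 1 + q$ sets $F_i \in \C^c$, supplies a rainbow set in $\C^c$, which is exactly a rainbow $R$ with $\nu^P(R) \ge p$.

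The main task is the Lerayness bound. Since $\C[S] = \{A \subseteq S : \nu^P_{\N_S}(A) < p\}$, where $\N_S$ is the sub-network on edge set $S$ (with the same internal-vertex set as $\N$), it suffices to prove $\lambda(\C_p(\N)) \le 2p - 2 + q$ for every network $\N$ with $|V^\circ(\N)| = q$ and every $p \ge 1$. I would carry out a double induction on $p$ and $q$, whose two base cases are precisely the results this theorem generalizes:
\begin{itemize}
\item When $q = 0$, every $S$--$T$ path is a single edge, so $\C_p(\N)$ is the complex of matchings of size less than $p$ in a bipartite graph; its Lerayness is at most $2p - 2$, which is the topological content behind Drisko's theorem (Theorem \ref{drisko}).
\item When $p = 1$, the required $\lambda(\C_1(\N)) \le q$ is equivalent, via Theorem \ref{km}, to Theorem \ref{networks}.
\end{itemize}

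For the inductive step, with $p, q \ge 2$, I would pick an internal vertex $v$ and ``split'' it: replace $v$ by a new target $v^-$ absorbing the incoming edges $E^-(v)$ and a new source $v^+$ emitting the outgoing edges $E^+(v)$, producing a network $\N'$ with $q - 1$ internal vertices. Edge-disjoint $S$--$T$ path packings in $\N$ correspond to ``$v$-paired'' packings in $\N'$ in which $S$--$v^-$ paths are matched with $v^+$--$T$ paths. Decomposing $\C_p(\N)$ according to how many paths in a putative packing pass through $v$, I would then apply a Mayer--Vietoris argument relating $\C_p(\N)$ to instances of $\C_{p'}(\N')$ for nearby values of $p'$; since $\N'$ has fewer internal vertices, the inductive hypothesis controls each piece, and chasing the exact sequence should land on $2p-2+q$.

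The principal obstacle is keeping the inductive step quantitatively tight: because the matching constraint coupling the in- and out-edges at $v$ is not matroidal, the exponents in the Mayer--Vietoris long exact sequence must be tracked carefully to land exactly on $2p - 2 + q$ rather than on a weaker bound. A cleaner alternative I would also pursue exploits the max-flow/min-cut reformulation $\C_p(\N) = \bigcup_D \bigl\{A \subseteq E(\N) : |A \cap D| \le p-1\bigr\}$, where $D$ ranges over the $S$--$T$ edge cuts of $\N$: each piece is the $(p-2)$-skeleton of a simplex joined with a simplex and hence $(p-1)$-Leray, so a nerve-type argument for the poset of cuts (whose combinatorics is governed by the $q$ internal vertices) might give the required bound more transparently.
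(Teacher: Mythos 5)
Your proposal is a sketch of a genuinely different route---bound $\blambda(\C)$ for $\C = \{A : \nu^P(A) < p\}$ and apply Theorem~\ref{km} directly---but it stops exactly where the work begins. The inductive Mayer--Vietoris step is not carried out: as you yourself note, the coupling between in-edges and out-edges at a split vertex $v$ is not matroidal, so ``decomposing according to how many paths pass through $v$'' does not hand you a cover by simplicial complexes with controlled Lerayness, and there is no evident reason the long exact sequence would land on $2p-2+q$ rather than something weaker. The alternative via Menger, $\C = \bigcup_D \{A : |A\cap D| \le p-1\}$ over all $S$--$T$ cuts $D$, does give pieces of Lerayness $p-1$, but a nerve-type argument over the (exponentially large, not well-structured) poset of cuts is also left as a hope. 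So as written there is a gap: the Lerayness bound that the whole argument rests on is asserted, not proved.

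The paper's proof avoids this entirely and is purely combinatorial, riding on a theorem already in hand. It splits each internal vertex $x$ into a sending copy $x'$ and absorbing copy $x''$ to form a bipartite graph $B(\N)$, and uses the correspondence $\phi$ between linearish arborescences in $\N$ and matchings in $B(\N)$ to turn each $F_i$ (with $\nu^P(F_i)\ge p$) into a matching $M_i=\phi(F_i)$ of size $p+q$; Claim~\ref{nup} gives the exact bookkeeping $|L_{ST}| = |\phi(L)| - q + |L\setminus(L_S\cup L_T)|$. The decisive move, which your sketch does not anticipate, is to \emph{augment} the family with $q$ copies of the auxiliary matching $W=\{x'x'' : x\in V^\circ\}$ and apply Theorem~\ref{stairs} (the stairway strengthening $(1,2,\ldots,n,n,\ldots,n)\to n$ of Drisko, with $n=p+q$): the sizes $(q,\ldots,q,p+q,\ldots,p+q)$ dominate the stairway sequence termwise, so a rainbow matching $R$ of size $p+q$ exists; pulling back via $\psi(R)=R\setminus W$ and re-applying Claim~\ref{nup} yields $\nu^P(\psi(R)) \ge (p+q)-q = p$, and the witnessing paths use only $M_i$-edges, hence are $\F$-rainbow. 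Thus the topological input is outsourced to Theorem~\ref{stairs}, while your approach would have to redo a Lerayness computation from scratch in the network setting. If you want to pursue your route, the first concrete target should be a self-contained proof that $\lambda$ of the ``no $p$ disjoint $S$--$T$ paths'' complex is at most $2p-2+q$ for $q=1$, before attempting the general Mayer--Vietoris step.
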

    
The case $q=0$ is Theorem \ref{drisko}, and the case $p=1$ is Theorem \ref{networks}.

The proof requires some definitions.

Given a network $\N$, define a bipartite graph $B(\N)$ whose vertex set is $\{v' \mid v \in S \cup V^\circ\} \cup \{v'' \mid v \in T \cup V^\circ\}$, and whose edge set is $\{u'v'' \mid uv \in E(\N)\} \cup W$, where $$W=\{x'x'' \mid x \in V^\circ\}.$$ 

 Here $v'$ is a ``sending'' copy of $v$, and $v''$ is an ``absorbing'' copy of $v$.

For a matching $M$ in $B(\N)$ let $\psi(M)=M \setminus  W$. 

A {\em linearish arborescence} is a digraph in which every vertex has in-degree and out-degree at most one, meaning that it is a collection of directed paths and cycles.
Given a linearish arborescence $L$ in  $\N$, 
let $\phi({L})$ be the matching  $\{x'y'' \mid xy \in E(L)\}  \cup \{x'x'' \mid x \not \in V(L)\}$ in $B(\N)$.

 For a linearish arborescence $L$ let $L_{CIRC}$ be the  the set of cycles in $L$, $L_S$ the set of  paths in $L$ starting at $S$,    $L_T$  the set of paths in $L$ ending  at $T$, $L_{ST}= L_S \cap L_T$. By definition, $|\nu^P(L)|=|L_{ST}|$.  
 
 \begin{claim}\label{nup}
    $|L_{ST}| = |\phi(L)| -|V^\circ(\N)|+|L\setminus (L_S \cup L_T)|$.

    \end{claim}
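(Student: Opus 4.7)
The plan is a direct double-counting argument, using the bijective correspondence encoded in $\phi$. I will express $|\phi(L)|$ in two ways: via its defining formula as a sum of edge-contributions and idle $V^\circ$-vertex contributions, and via the component structure of $L$.

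First, I would unpack $|\phi(L)|$ from the definition. The two terms are disjoint, and they contribute
\[
|\phi(L)| \;=\; |E(L)| + |V^\circ(\N) \setminus V(L)|,
\]
where only vertices in $V^\circ$ are counted in the second term (because $x'x''$ requires $x \in V^\circ$, otherwise one of $x',x''$ does not exist in $B(\N)$). Then I would count $|E(L)|$ via the basic fact that a cycle on $k$ vertices has $k$ edges and a path on $k$ vertices has $k-1$ edges; thus
\[
|E(L)| \;=\; |V(L)| - (\text{number of path components of } L).
\]

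Next I would partition the path components of $L$ into the four classes $L_{ST}$, $L_S \setminus L_T$, $L_T \setminus L_S$, and $L \setminus (L_S \cup L_T)$, so the total number of path components equals $|L_S| + |L_T| - |L_{ST}| + |L \setminus (L_S \cup L_T)|$. To bring $V^\circ$ into the picture, I would observe that since vertices of $S$ have in-degree $0$ in $\N$ and vertices of $T$ have out-degree $0$ in $\N$, a vertex in $S$ can appear in $L$ only as the start of a path in $L_S$, and similarly $T \cap V(L)$ is exactly the set of endpoints of paths in $L_T$. Hence
\[
|V(L) \cap S| = |L_S|, \qquad |V(L) \cap T| = |L_T|, \qquad |V(L)| = |L_S| + |L_T| + |V^\circ \cap V(L)|.
\]

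Finally, I would substitute these identities into the expression for $|\phi(L)|$. The cross terms $|L_S|$ and $|L_T|$ cancel, the $V^\circ \cap V(L)$ term combines with $V^\circ \setminus V(L)$ to produce $|V^\circ(\N)|$, and what remains is precisely $|L_{ST}| - |L \setminus (L_S \cup L_T)| + |V^\circ(\N)|$, yielding the claimed identity $|L_{ST}| = |\phi(L)| - |V^\circ(\N)| + |L \setminus (L_S \cup L_T)|$. There is no genuine obstacle here; the only care required is the bookkeeping step of verifying that $S$- and $T$-vertices cannot lie in cycles or in the interior of paths, which is where the definition of a network (no edges into $S$, none out of $T$) is actually used.
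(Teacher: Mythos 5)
Your proof is correct and rests on the same double-counting idea as the paper's argument: classifying the components of $L$ as cycles, $S$--$T$ paths, paths touching exactly one of $S,T$, or paths touching neither, and balancing edges against inner vertices, using that $S$-vertices are sources and $T$-vertices are sinks in the network. The paper organizes the count per component (comparing each component's contribution to $|\phi(L)|$ with its contribution to $V^\circ$), while you route through the global identities $|E(L)| = |V(L)| - (\text{number of path components})$ and inclusion--exclusion on $L_S$, $L_T$; these are just two bookkeeping presentations of the same computation.
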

   To see this, note: 
    \begin{enumerate}
        \item If $C \in L_{CIRC}$ then $|V(C)|=|E(C)|$, 
        \item if $P\in L_{ST}$ then $|E(P)|=|V^\circ(P)|+1$, so $P$  contributes to $\phi(L)$ $1$ more than to $V^\circ$, 
        \item paths in $(L_S \cup L_T) \setminus L_{ST}$ contribute the same number to $\phi(L)$ and to $V^\circ$, and 
        
        \item paths in $L\setminus (L_S \cup L_T)$ contribute $1$ less to  $\phi(L)$  than to $V^\circ$. 
    \end{enumerate}



\begin{proof} [Proof of Theorem \ref{manypaths}]
    For every $i \le 2p-1+q$ let 
$M_i=\phi(F_i)$. By Claim \ref{nup} (taking $L=F_i)$  $|M_i|=p+q$.    Consider the sequence of $2(p+q)-1$ matchings 
$W_1, W_2, \ldots W_{q},M_1, \ldots ,M_{2p-1+q}$, where $W_j=W$ for all $j \le q$. By Theorem \ref{stairs} (letting $n=p+q$), these matchings  have a rainbow matching $R$ of size $p+q$. Let $L=\psi(R)$. By Claim \ref{nup} 
$\nu^P(R) \geq |F_i| - q=p$, and clearly the paths in $R$ witnessing this are constructed from edges in $M_i$. 
\end{proof}

\section{Scrambling}\label{sec:scrambling}

We next return to a notion from Section \ref{covering}: 
\begin{definition}
    Given two families (that is, multisets) $\F,\cs$ of subsets of the same ground set, we say that $\cs$ is a \emph{scrambling} of $\F$ if $\bigcup \F=\bigcup \cs$ (as multisets), and that it is an {\em $n$-scrambling} if every set in $\cs$ has size at most $n$. 
\end{definition}

A straightforward application of Rado's theorem (Theorem \ref{rado}) yields:
\begin{theorem}
 An $n$-scrambling of $n$ independent sets of size $n$ in a matroid have an independent full rainbow set. 
\end{theorem}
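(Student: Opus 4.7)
The plan is to apply Rado's theorem (Theorem \ref{rado}) directly to the scrambled family $\cs=(S_1,\dots,S_n)$ inside the matroid $\M$. Write $\F=(I_1,\ldots,I_n)$ for the given independent sets. Since every $|S_j|\le n$ and $\sum_j|S_j|=\sum_i|I_i|=n^2$, the scrambling $\cs$ has exactly $n$ members, each of size precisely $n$. For any index set $J\subseteq[n]$ with $|J|=k$, set $U=\bigcup_{j\in J}S_j$; the only thing to verify for Rado is the rank inequality $\rank_\M(U)\ge k$.

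The key step is a one-line double count that uses the defining property of a scrambling: for every $v\in V(\M)$, the multiplicity $|\{i:v\in I_i\}|$ equals $|\{j:v\in S_j\}|$. Summing this identity over $v\in U$ and swapping the order of summation,
$$\sum_{i=1}^n |I_i\cap U|\;=\;\sum_{v\in U}|\{i:v\in I_i\}|\;=\;\sum_{v\in U}|\{j:v\in S_j\}|\;\ge\;\sum_{j\in J}|S_j|\;=\;nk,$$
where the inequality drops the contributions of the indices $j\notin J$ and uses $S_j\subseteq U$ for $j\in J$. By averaging, some $i^\star\in[n]$ satisfies $|I_{i^\star}\cap U|\ge k$, and since $I_{i^\star}$ is independent in $\M$, the subset $I_{i^\star}\cap U\subseteq U$ is an independent set of size at least $k$ contained in $U$, so $\rank_\M(U)\ge k$.

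With Rado's condition verified for every $J$, Theorem \ref{rado} produces a full choice function on $\cs$ whose image is $\M$-independent, which is the desired independent full rainbow set. I do not foresee a genuine obstacle here: the entire argument is a pigeonhole over the unchanged element-multiplicities, which transfers the ready-made independence of the original $I_i$'s directly into the Rado rank condition for the scrambled family.
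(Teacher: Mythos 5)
Your proof is correct and takes precisely the route the paper indicates: the paper introduces this result with the remark ``A straightforward application of Rado's theorem yields,'' and your argument supplies exactly that application, verifying the Rado rank condition for the scrambled family via a double count over element multiplicities followed by pigeonhole. One small caveat in the write-up: you present ``$\cs$ has exactly $n$ members'' as a deduction from $|S_j|\le n$ and $\sum_j|S_j|=n^2$, but those facts only give $|\cs|\ge n$; that $\cs$ has exactly $n$ members must be read as part of the hypothesis (as it must be, since otherwise a rank-$n$ matroid would admit no full rainbow set at all), and then $|S_j|=n$ for every $j$ does follow. With that reading fixed, the verification $\rank_\M\bigl(\bigcup_{j\in J}S_j\bigr)\ge|J|$ is exactly right and Rado finishes the job.
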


 In \cite{abbsz} the following was proved:
 
 \begin{theorem}\label{scrambledmatchings}
  If $\F$ is a family of $n^2-n/2$ matchings of size $n$ in a bipartite graph and $\cs$ is an $n$-scrambling of $\F$, then $\cs$ has a rainbow matching of size $n$.
 \end{theorem}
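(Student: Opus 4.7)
The plan is to argue by contradiction. Let $R$ be a rainbow matching in $\cs$ of maximum size $r$, and suppose $r < n$; write $V_R$ for the $2r$-element vertex set of $R$ and $\cs_R \subseteq \cs$ for the $r$ sets supplying the edges of $R$. Maximality forces every edge in every set $S \in \cs \setminus \cs_R$ to touch $V_R$, since otherwise such an edge could augment $R$.

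First I would run a double-counting argument. Each matching $M \in \F$ has size $n$ and, being itself a matching, at most $2r$ of its edges touch $V_R$. Hence at least $n-2r$ of its edges avoid $V_R$, for a total of at least $(n^2 - n/2)(n-2r)$ such edges across $\F$ counted with multiplicity. All of these must live inside the $r$ sets of $\cs_R$, which hold at most $rn$ edges in total, yielding
\[
(n^2 - n/2)(n-2r) \le rn,
\]
which simplifies to $r \ge (2n-1)/4$. So simple counting alone only disposes of the tiny regime $r < (2n-1)/4$, well short of our goal $r\ge n$.

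To bridge the gap I would invoke a rainbow alternating-path augmentation, in the style of the proof of Drisko's theorem (Theorem \ref{drisko}) and of its staircase strengthening (Theorem \ref{stairs}). Given an edge $f_0 \in S \in \cs_R$ disjoint from $V_R$, swap the $R$-representative $e$ of $S$ for $f_0$; then $e$ becomes freed and can potentially be reinserted through any other set of $\cs$ containing a copy of $e$. Iterating this substitution along alternating walks should yield genuine augmentations of $R$, with the required supply of replacement sets coming from the high edge multiplicity forced by having $n^2 - n/2 = (n/2)(2n-1)$ underlying matchings in $\F$. The cleanest packaging is likely to collect the alternatives into an auxiliary family of matchings extracted from $\cs_R$ together with pieces of $\F$ and then invoke Drisko or Theorem \ref{stairs} directly.

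The main obstacle is carrying out this augmenting step rigorously: each swap frees exactly one edge and demands a fresh occurrence of that edge in a different set of $\cs$, so the accounting must carefully track which sets remain available as alternative hosts after each swap. The factor $n/2$ visible in $n^2 - n/2 = (n/2)(2n-1)$ suggests that roughly $n/2$ successive augmentation rounds are needed, each consuming a Drisko-sized block of about $2n-1$ matchings from $\F$; this is what makes the precise hypothesis $|\F| = n^2 - n/2$ natural, and what I expect to be the delicate part of any rigorous implementation.
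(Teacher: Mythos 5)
Your double-counting step is correct as far as it goes, but it genuinely dies for the reason you already noticed: once $r\ge n/2$ the quantity $n-2r$ is no longer positive, so the inequality $(n^2-n/2)(n-2r)\le rn$ gives no information at all. It is not merely that the bound $r\ge (2n-1)/4$ is weaker than $r\ge n$; the method itself has nothing further to say in exactly the regime where the theorem has content. So the remaining work is not a matter of tightening the same estimate.

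The second half of the proposal is a plan, not a proof, and you say so yourself (``this is what I expect to be the delicate part''). The specific obstacle you should confront is that scrambling destroys the matching structure: the sets $S\in\cs$ are arbitrary collections of up to $n$ edges, not matchings, so you cannot invoke Drisko's theorem or Theorem~\ref{stairs} on $\cs$ directly, and any auxiliary family of matchings you extract from $\cs_R$ together with pieces of $\F$ has to be constructed explicitly, with a proof that the rainbow matching you get back is rainbow with respect to $\cs$ and not merely with respect to $\F$. None of this bookkeeping is done. The numerology $n^2-n/2=(n/2)(2n-1)$ is suggestive but it is not evidence that a round-by-round augmentation of depth $n/2$ exists; the rounds interfere with each other because the edges freed and the sets consumed at each step depend on the earlier choices.

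For comparison, the paper does not take an augmenting-path route at all: as stated in the text, the proof of Theorem~\ref{scrambledmatchings} in \cite{abbsz} goes through Topological Hall (Theorem~\ref{matcomp}), bounding the topological connectivity of the relevant complex to force a large rainbow matching. That is a structurally different argument from yours, and the one concrete observation you do establish (every edge of $\cs\setminus\cs_R$ must meet $V_R$) is a natural first line but does not by itself set up the topological machinery either. As it stands the proposal has a genuine gap and does not constitute a proof.
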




The proof uses Topological Hall. This may possibly be improved:

\begin{conjecture}
Let $n \geq 4$. If $\F$ is a family of $\binom{n}{2}+1$ matchings of size $n$ in a bipartite graph and $\cs$ is an $n$-scrambling of $\F$, then $\cs$ has a rainbow matching of size $n$. 
\end{conjecture}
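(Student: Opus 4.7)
The plan is to invoke the matroidal Topological Hall theorem (Theorem \ref{matcomp}). Let $\cm$ be the rank-$n$ truncation of the partition matroid on $V = \bigcup \cs$ whose parts are indexed by $\cs$, and let $\cc$ be the matching complex of the bipartite graph, restricted to $V$. An $(\cm, \cc)$-marriage consists of $n$ edges forming a matching and drawn from $n$ distinct sets of $\cs$, so it \emph{is} a rainbow matching of size $n$. By Theorem \ref{matcomp}, such a marriage exists provided
\begin{equation*}
\eta\bigl(\cc[S]\bigr) \;\ge\; \min\bigl(\,|\{S_i \in \cs : S \cap S_i \neq \emptyset\}|,\; n\,\bigr) \qquad \text{for every } S \subseteq V.
\end{equation*}

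The second step is to invoke a connectivity bound for bipartite matching complexes, converting the Hall condition into a statement about matching numbers of the sub-multigraphs $S$. The argument for Theorem \ref{scrambledmatchings} in \cite{abbsz} relies on a bound of roughly the form $\eta(M(H)) \ge \nu(H)/2$; the factor-of-two gap between $n^2 - n/2$ and $\binom{n}{2}+1$ strongly suggests that either a sharper connectivity bound, or a more efficient application of the same bound, is what the conjecture demands. The verification then splits on the number of parts met by $S$: if $S$ meets many sets of $\cs$, pigeonhole over the $\binom{n}{2}+1$ matchings of $\F$ forces some $F_i$ to be essentially contained in $S$ (as the complement multiset is too small to hold many full $F_i$), supplying $\nu(S) \ge n$; if $S$ meets few sets, the relevant edges are confined to a few $F_i$ and the condition should reduce to Hall-type computations inside this concentrated structure.

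The principal obstacle is the small-to-moderate regime. An adversarial scrambling can slice a single matching $F_i$ into many small sets in $\cs$, so that a given $S$ may draw edges from many distinct $F_i$ while meeting only few sets of $\cs$ --- precisely where the bound $\eta \ge \nu/2$ loses a factor of two. I expect the heart of the proof to be a sharpened connectivity lemma for bipartite matching complexes of unions of few edge-disjoint matchings, perhaps of the form ``$\eta(M(H)) \ge \nu(H) - \mathrm{def}(H)$ when $H$ is a union of at most $k$ edge-disjoint matchings'', with an explicit defect depending on $k$. Identifying and proving such a topological lemma, and then aligning its parameters with the counting provided by $|\F| = \binom{n}{2}+1$, is where I expect substantial new input beyond the techniques of \cite{abbsz} to be required, and is presumably the reason the conjecture remains open.
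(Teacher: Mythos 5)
This statement is an open conjecture in the paper, not a theorem: it appears as Conjecture~\ref*{scrambledmatchings}-adjacent text (``This may possibly be improved:''), sandwiched between the proved bound of $n^2-n/2$ matchings (Theorem~\ref{scrambledmatchings}, from \cite{abbsz}) and the sharpness construction showing $\binom{n}{2}$ matchings do not suffice. The paper offers no proof, so there is nothing to compare your attempt against. You correctly recognize this in your final sentence, and what you have written is a sketch of an approach plus an honest account of where it stalls, not a proof. That assessment is accurate: the Topological Hall route via Theorem~\ref{matcomp} is indeed how \cite{abbsz} reach $n^2-n/2$, and closing the factor-of-two gap almost certainly requires a sharper connectivity bound for these matching complexes, which is exactly what you flag as the missing ingredient.

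One technical slip worth fixing if you pursue this: Theorem~\ref{matcomp} requires $\eta(\cc[S]) \ge \rank(\cm.S)$ where $\cm.S$ is the \emph{contraction} of $\cm$ to $S$, so $\rank(\cm.S) = \rank_\cm(V) - \rank_\cm(V\setminus S)$. The quantity you wrote, $\min\bigl(|\{S_i \in \cs : S \cap S_i \neq \emptyset\}|,\, n\bigr)$, is the rank of the \emph{restriction} $\cm|S$, which is generally larger, so the hypothesis you state is stronger than what the theorem demands. For the truncated partition matroid this matters: the contraction rank is small whenever $V\setminus S$ still meets many parts, which is exactly the regime where you would otherwise be forced to verify an unnecessarily strong connectivity bound. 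Using the correct (weaker) hypothesis is the natural place to look for the savings you need.
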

 
If true, this is best possible. In \cite{abbsz}  the following was shown: 

\begin{theorem}
 For $n \geq 4$, there exists an $n$-scrambling of  $\binom{n}{2}$ matchings of size $n$ in a bipartite graph, with no rainbow matching of size $n$.
\end{theorem}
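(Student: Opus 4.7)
The plan is to exhibit an explicit bipartite graph $G$, a family $\F$ of $\binom{n}{2}$ matchings of size $n$ in $G$, and an $n$-scrambling $\cs$ of $\F$ admitting no rainbow matching of size $n$.

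First I would settle the base case $n=4$, which coincides with the classical Drisko sharpness example. Take $G = K_{4,4}$ and two perfect matchings $M^+, M^-$ whose union is the Hamilton cycle $C_8$. Let $\F$ consist of $n-1=3$ copies each of $M^+$ and of $M^-$, so $|\F| = 2(n-1) = 6 = \binom{n}{2}$. Take $\cs = \F$ (the trivial scrambling). Any matching of size $n$ contained in $\bigcup\F \subseteq C_8$ must coincide with one of the two perfect matchings $M^+, M^-$ of the cycle; neither can be rainbow-realized, since a rainbow version of $M^\pm$ would need $n = 4$ distinct sets of $\cs$ each contributing a different edge of $M^\pm$, whereas only $n-1 = 3$ copies are present.

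For $n\ge 5$ the situation is more delicate, since $\binom{n}{2} > 2(n-1)$ forces $\F$ to contain more than two ``types'' of perfect matchings, and a nontrivial scrambling becomes essential (any two-type Drisko-like configuration with multiplicities $a+b = \binom{n}{2}$ and $a,b\le n-1$ already forces $n\le 4$). The plan is to take $\binom{n}{2}$ matchings of size $n$ in a bipartite graph so that their multiset union is supported on a structured subgraph---for instance a Hamilton cycle or a union of shift matchings of $K_{n,n}$---and then consolidate the edges into $\binom{n}{2}$ sets of size exactly $n$ in a way that concentrates the edges of every candidate rainbow perfect matching into too few sets. More precisely, for every permutation $\pi$ of $[n]$, one wants to arrange so that at least two edges of the perfect matching realizing $\pi$ fall into a common set of $\cs$, thereby violating Hall's condition for the bipartite incidence between $\pi$'s edges and sets of $\cs$, and blocking $\pi$'s rainbow realization.

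The hard part is the simultaneous engineering of $\F$ and $\cs$ for $n \geq 5$: every additional matching beyond the Drisko bound threatens to enable a new rainbow configuration (a new permutation that becomes rainbow-realizable), and the scrambling must neutralize all of them at once while respecting $\bigcup \F = \bigcup \cs$. Verification reduces to a case analysis over permutations $\pi$ of $[n]$, grouped by cycle structure, showing each is blocked by a forced pair of $\pi$-edges sharing a set of $\cs$; the explicit construction realizing this at arbitrary $n$ is the one carried out in \cite{abbsz}.
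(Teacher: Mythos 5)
Your proposal does not prove the statement for general $n\geq 4$: it constructs the example only for $n=4$, and for $n\geq 5$ it describes a strategy in broad strokes (``consolidate the edges \dots\ in a way that concentrates the edges of every candidate rainbow perfect matching into too few sets'') and then explicitly defers to \cite{abbsz} for the construction. That deferral is the whole point of the theorem; citing the source is not a proof. The paper under review does not include a proof either, as it attributes the result to \cite{abbsz}, so there is no in-paper argument to compare against --- but as a self-contained proof attempt, yours has a genuine gap for all $n\geq 5$.

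The $n=4$ base case you give is correct and clean: since $\binom{4}{2}=2\cdot 4-2=6$, the unscrambled Drisko sharpness example (three copies each of the two alternating perfect matchings of $C_8$) is itself a family of $\binom{n}{2}$ matchings of size $n$ with no rainbow matching, and the trivial scrambling $\cs=\F$ is a valid $n$-scrambling. But this coincidence is exactly what you point out fails for $n\geq 5$: once $\binom{n}{2}>2(n-1)$, one must either add genuinely new matchings (each of which risks enabling a rainbow perfect matching) or perform a nontrivial re-partition of the edge multiset, and you do not exhibit a family $\F$, a bipartite host graph, a scrambling $\cs$, or an argument blocking every candidate rainbow perfect matching. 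The ``case analysis over permutations $\pi$ of $[n]$ grouped by cycle structure'' is named but never carried out, and without an explicit $\cs$ there is nothing to verify. To complete the argument one would need to actually write down the $\binom{n}{2}$ sets and show, for every matching $M$ of size $n$ in the host graph, that $M$ fails Hall's condition against $\cs$.
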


We can prove the following scrambled version of Theorem \ref{networks}:

\begin{theorem}\label{scramblednetworks}
Suppose $|V(D) \setminus \{s,t\}|=k$, and $\F$ is a family of $>\frac{nk}{2}$ $s-t$ paths. If $\F'$ is an $n$-scrambling of $\F$ then $\F'$ has a rainbow $s-t$ path.
\end{theorem}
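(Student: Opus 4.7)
The plan is to adapt the proof of Theorem \ref{manypaths} to the scrambled setting. Working inside the bipartite graph $B(\N)$, one would try to build, from the sets $F\in\cs$ and the fixed matching $W$, a family of matchings of $B(\N)$ whose sizes fit the staircase $(1,2,\dots,k,k+1,k+1,\dots,k+1)$ of length $2k+1$, then apply Theorem \ref{stairs} with $n=k+1$. Any rainbow matching $R$ of size $k+1$ produced this way is a perfect matching of $B(\N)$, and by Claim \ref{nup} the associated linearish arborescence $\psi(R)$ contains an $s-t$ path whose edges come from distinct $F\in\cs$.

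The first $k$ matchings in the staircase will be copies of $W$ (each of size $k$, which handles the required sizes $1,2,\dots,k$). For the final $k+1$ matchings of size $k+1$, I would ideally take $\phi(L_F)$ for chosen linearish arborescences $L_F\subseteq F$; the maximum matching of $B(\N)$ supported in $\phi_0(F)\cup W$ has size $k+\nu^P(F)$, so $F$ yields a size-$(k+1)$ matching exactly when $F$ contains an $s-t$ path. The hypothesis $|\F|>nk/2$ enters through a cut bound: for every $s-t$ cut $X$ in $\N$, each $P\in\F$ traverses $\partial^+X$ forward at least once, so writing $c(e)$ for the $\cs$-multiplicity of $e$,
\[
|N_\cs(\partial^+X)| \;\geq\; \frac{1}{n}\sum_{e\in\partial^+X}c(e) \;\geq\; \frac{|\F|}{n} \;>\; \frac{k}{2},
\]
and every $s-t$ cut is met by at least $\lceil k/2\rceil+1$ sets of $\cs$.

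The hard part is that one cannot in general find $k+1$ individual sets of $\cs$ each containing an $s-t$ path; small examples show that the rainbow path may have to be assembled from edges of many sets that fail to contain an $s-t$ path on their own. The final $k+1$ matchings of the staircase will therefore have to be built from edges drawn across several distinct sets of $\cs$, with enough book-keeping that the rainbow selection from Theorem \ref{stairs} does not reuse any single $F$. A cleaner workaround, which I expect is the actual route, is to abandon Theorem \ref{stairs} in favor of Topological Hall (Theorem \ref{matcomp}), applied to the complex on $E(D)$ whose faces are the edge subsets contained in some $s-t$ path of $\N$ and admitting an SDR into $\cs$. The cut bound above should translate, via a nerve-type computation parallel to the one behind Theorem \ref{scrambledmatchings}, into the required bound $\eta(\C[S])\geq \rk(\cm.S)$ for a suitable matroid $\cm$ on $E(D)$; verifying that estimate is what I expect to take the most work.
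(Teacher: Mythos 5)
Your proposal does not match the paper's argument, and more importantly it is not a complete proof: it sketches two routes (the $B(\N)$ plus staircase approach, and a Topological Hall approach) and acknowledges that neither is carried through. The key obstruction you correctly identify in the first route---that individual sets of the scrambling need not contain an $s$-$t$ path, so you cannot directly manufacture $k+1$ matchings of size $k+1$ in $B(\N)$---is real, and the "book-keeping" needed to assemble them from multiple scrambled sets is exactly what is missing. Your second route, via Theorem \ref{matcomp}, is only a guess: you do not supply the matroid $\cm$ or the $\eta$-estimate, and the cut bound you derive (every $s$-$t$ cut meets more than $k/2$ sets of $\cs$), while correct, does not by itself yield a rainbow path without a Menger-type lemma that is not in the paper.

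The paper's actual proof is more elementary and avoids both $B(\N)$ and topology entirely. It introduces \emph{$n$-fold source and target towers}: a nested construction around $s$ (resp.\ $t$) in which each new vertex $v_i$ is attached by a set $S^-(v_i)$ of at least $n$ parallel incoming edges from earlier vertices. Taking a maximal vertex-disjoint pair $S,T$ of such towers, one shows that either some $w\not\in S\cup T$ has $|e(S,w)|+|e(w,T)|>n$, or else a double count using $|\F|>nk/2$ forces a direct edge $e$ from $S$ to $T$. In either case the sets $S^-(v_i)$, $T^+(u_j)$ together with $\{e(S,w),e(w,T)\}$ or $\{e\}$ form what the paper calls a \emph{path enforcer}: a family $\K$ of edge sets such that (i) any full choice function for $\K$ contains an $s$-$t$ path, and (ii) every subfamily $\K'\subseteq\K$ satisfies $|\bigcup\K'|\ge n(|\K'|-1)+1$. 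Property (ii), combined with the $n$-scrambling hypothesis and pigeonhole, shows that $\bigcup\K'$ always meets at least $|\K'|$ distinct members of $\F'$, so ordinary Hall's theorem applied to the bipartite incidence between $\K$ and $\F'$ produces an $\F'$-rainbow choice function for $\K$, hence a rainbow $s$-$t$ path. This is a purely greedy/counting argument; no staircase theorem, no linearish arborescences, and no topological machinery are needed. If you want to salvage your second route, the concrete gap to fill is specifying the matroid and proving the $\eta$ lower bound on the relevant link complex---but the tower-and-enforcer argument is considerably shorter.
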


The main tool in the proof will be the following class of objects.

\begin{definition}
    An $n$-\emph{fold source tower} in $D$ is a sub-digraph $S$ containing $s$, equipped with an ordering of its vertices $s=v_0,v_1,\dots $ such that the set of edges to $v_i$ from all previous $v_j$, denoted $S^-(v_i)$, is of size at least $n$ for every $i>0$. In particular, there are $\geq n$ edges from $s$ to $v_1$. Similarly, $T\subset D$ containing $t$ is an $n$-\emph{fold target tower} if $V(T)$ has an ordering where every $u \in T$ has a set $T^+(u)$ of at least $n$ edges to all previous vertices. 
\end{definition}

\begin{proof} (Theorem \ref{scramblednetworks}):
Let $S,T$ be a maximal pair of vertex-disjoint $n$-fold source and target towers.

Let $W=V(D)\setminus (S\cup T)$.
For each $w\in W$, denote by $e(S,w), e(w,T)$ the sets of edges in $D$ from $S$ to $w$ and from $w$ to $T$ respectively. 

Suppose first that there is some $w\in W$ with $|e(S,w)|+|e(w,T)| > n$. Note that both $e(S,w)$ and $e(w,T)$ are nonempty, otherwise the other would have size at least $n+1$ and $w$ could be added to the corresponding $n$-fold tree, contradicting maximality.

Consider the family of edge sets 
\[\K:= \{S^-(v_1),...,S^-(v_a),T^+(u_1),\dots, T^+(u_b),e(S,w),e(w,T)\}. \] 
(here $a$ and $b$ are sizes of the source tower and target tower, respectively.)

 $\K $ has the following two properties:

\begin{itemize}
    \item Any full choice function for $\K$ contains an $s-t$ path, and
    \item For any $\K' \subseteq \K$, $|\bigcup \K'| \geq n(|\K'|-1)+1 $. 
\end{itemize}
The first condition arises since a choice function for $\{S^-(v_i)\}$ is a directed  tree spanning $V(S)$ and rooted at $s$, a choice function for $\{T^+(u_i)\}$ is a tree spanning $V(T)$ directed towards $t$, 
and a choice function for $\{e(S,w),e(w,T\}$ is a path of length 2 from $S$ to $T$. 
The second follows since all sets in $\K$ have size at least $n$, except for $e(S,w)$ and $e(w,T)$ which have at least $n+1$ combined (and crucially, both are at least 1).

Call a family $\K$ with these two properties a \emph{path enforcer}. Note that any path enforcer contains a rainbow $s-t$ path as needed: pigeonhole implies that every $\K'\subset \K$ contains edges of at least $|\K'|$ different colours in $\F'$ (as it is an $n$-scrambling of $\F$), and Hall's marriage theorem then implies there is a choice function for $\K$ which is $\F'$-rainbow, which in turn contains an $s-t$ path.

So instead, we may assume $|e(S,w)|+|e(w,T)| \leq n$ for every $w$. Since there are $>nk/2$ paths in $D$ 
leaving $S$ (and entering $T$), we can double-count as follows:
\begin{align*}
nk
&< \sum_{w \not\in S} |e(S,w)|+\sum_{w \not\in T} |e(w,T)| \\
&=\sum_{w\in W} (|e(S,w)|+|e(w,T)|)+2|e(S,T)| \\
&\leq n|W| +2|e(S,T)| \\
&\leq nk+2|e(S,T)|,
\end{align*}

meaning there is at least one edge $e$ from $S$ to $T$.

But now, $\{S^-(v_1),\dots,S^-(v_a), T^+(u_1),\dots, T^+(u_b), \{e\} \}$ is a path enforcer, so again it contains an $s-t$ path which is $\F'$-rainbow.
\end{proof}

\section{Adding a size condition}


In \cite{ab1} (Theorem 4.1)
the following strengthening of Drisko's theorem was proved:

\begin{theorem}\label{thm:driskorepeats}
Let $k \le n$, and let $M_1, \ldots ,M_{2k-1}$ be a set of $2k-1$ matchings, each of size $n$. Then there exists a matching of size $n$ contained in $\bigcup M_i$, representing at least $k$ matchings $M_i$.
\end{theorem}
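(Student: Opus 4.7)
The plan is to prove this by induction on $k$ using an augmentation/exchange argument in the spirit of the classical proof of Drisko's theorem (Theorem~\ref{drisko}). The base case $k=1$ is trivial: the matching $M_1$ has size $n$ and represents $M_1$. For the inductive step, among all matchings of size $n$ in $\bigcup_{i=1}^{2k-1}M_i$, I would choose one $N$ maximizing the number $j$ of matchings it represents, and assume for contradiction that $j \leq k-1$. Let $J = \{i_1,\ldots,i_j\}$ be the represented indices and $U = [2k-1]\setminus J$, so $|U| \geq k > j$. Assign each edge $e \in N$ a color $c(e) \in J$ with $e \in M_{c(e)}$, and write $N_\ell = c^{-1}(i_\ell)$. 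After restricting to the vertices saturated by $N$, we may assume that $N$ and each $M_i$ are perfect matchings on a $2n$-vertex bipartite graph.

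For $i \in U$, the failure of $M_i$ to be represented forces $M_i \cap N = \emptyset$, so $M_i \cup N$ is $2$-regular and decomposes into even $M_i/N$-alternating cycles. Call a cycle $C$ of $M_i \cup N$ \emph{safe} if for every $\ell$ there is an edge of $N_\ell$ not in $C$. A safe cycle gives the desired augmentation: the swap $N \mapsto (N \setminus C) \cup (C \cap M_i)$ preserves the size $n$, adds $M_i$ to the represented list, and keeps every $i_\ell$ represented, so the new matching represents $j+1$ of the $M_i$, contradicting the maximality of $N$.

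The task therefore reduces to finding a safe cycle in some $M_i \cup N$, and this is the step I expect to be the main obstacle. Suppose no such cycle exists; then every cycle of every $M_i \cup N$ \emph{absorbs} some color $i_\ell$ (contains all of $N_\ell$), and since the cycles of $M_i \cup N$ have pairwise disjoint $N$-portions, the absorbed colors are pairwise distinct across cycles of the same $M_i \cup N$. In particular each $M_i \cup N$, $i \in U$, has at most $j$ cycles. Moreover, in the extremal case of exactly $j$ cycles, each cycle must be the alternating cycle on $V(N_\ell) \cup M_i|_{V(N_\ell)}$ for some $\ell$, forcing $M_i$ to respect the color partition $\{V(N_\ell) : \ell \in [j]\}$ of the vertex set; the matchings $M_{i_\ell}$ automatically satisfy $M_{i_\ell}|_{V(N_\ell)} = N_\ell$.

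The conclusion of the proof is then to derive a contradiction from this rigid block structure by applying the inductive hypothesis to each block: inside $V(N_\ell)$ of size $2n_\ell$ we have at least $|U|+1 \geq k+1$ matchings of size $n_\ell$ (the restrictions of $M_i$ for $i \in U$, together with $N_\ell$ itself), and the inductive hypothesis produces rainbow submatchings in each block which concatenate to a matching of size $n$ representing at least $k$ of the $M_i$. The delicate point is the sub-case where some $M_i \cup N$ has fewer than $j$ cycles (so some colors fail to be absorbed); there I would merge the corresponding color blocks before applying the block-wise induction, so that the merged partition is again preserved by the $M_i$, $i \in U$, with the un-absorbed colors now living inside a common merged block. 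Getting this merging step and the block-wise reassembly to give a rainbow count of at least $k$ (rather than dropping by $1$ from the merge) is the main technical hurdle of the argument.
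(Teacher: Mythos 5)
The survey does not actually prove Theorem \ref{thm:driskorepeats}; it is quoted from \cite{ab1} (Theorem 4.1), so there is no in-paper proof to compare against. Evaluated on its own, your proposal is not a proof: it contains several unjustified steps, and you yourself leave the key closing step open. The concrete problems are these.

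First, the reduction to perfect matchings fails. You write that ``after restricting to the vertices saturated by $N$, we may assume that $N$ and each $M_i$ are perfect matchings on a $2n$-vertex bipartite graph,'' but restricting $M_i$ to $V(N)$ deletes every $M_i$-edge with an endpoint outside $V(N)$, so the restricted $M_i$ may have fewer than $n$ edges; it is then not a matching of size $n$ and the inductive hypothesis no longer applies. In general $M_i \cup N$ decomposes into isolated shared edges, alternating \emph{paths}, and alternating cycles, and your swap argument only addresses cycles. Alternating paths cannot be waved away: a path with more $M_i$-edges than $N$-edges yields a matching of size $n+1$ from which an edge must be dropped, potentially destroying one of the $j$ retained representations, so the swap does not strictly increase the rainbow count. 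Relatedly, the assertion ``the failure of $M_i$ to be represented forces $M_i\cap N=\emptyset$'' is simply false: an edge $e\in M_i\cap N$ for unrepresented $i$ is perfectly possible when $e$ is the representative of some $i'\in J$; maximality only forces $e$ to be a representative, not to avoid $M_i$.

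Second, the structural claim driving the final step is not established. From ``every cycle of $M_i\cup N$ absorbs some colour $i_\ell$'' you conclude that, in the extremal case, each cycle is exactly the alternating cycle on $V(N_\ell)$ and hence that each $M_i$, $i\in U$, respects the colour partition $\{V(N_\ell)\}$. But a cycle that absorbs colour $\ell$ (contains all of $N_\ell$) may also pass through edges of other colour classes $N_{\ell'}$, so there is no reason its $N$-part equals $N_\ell$ or that $M_i$ preserves the partition. Finally, even granting the block structure, concatenating blockwise rainbow matchings does not automatically give at least $k$ \emph{distinct} represented indices: the counting here is a matching in the auxiliary bipartite graph (indices versus edges of $N$), and representatives coming from different blocks may be forced onto the same index $i$, so the rainbow counts of the blocks do not add. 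You flag the merging/reassembly step as ``the main technical hurdle,'' and indeed it is precisely where the argument is missing; as written the proposal is a sketch with real gaps rather than a proof.
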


Note that here, ``representing'' is injective, namely each representation is by a distinct edge.

It may also be possible to strengthen Theorem \ref{thm:driskorepeats} along the lines of Theorem \ref{stairs}.

\begin{conjecture}
Let $k \le n$, and let $M_1, \ldots ,M_{2k-1}$ be a set of $2k-1$ matchings, where $|M_i|=\min(i,k-1)$ for $i \le k-1$ and $|M_i|=n$ for $k\le i \le 2k-1$. Then there exists a matching of size $n$ contained in $\bigcup M_i$, representing at least $k$ matchings $M_i$.
\end{conjecture}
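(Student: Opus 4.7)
My approach is by induction on $k$ with $n$ fixed, using Theorem \ref{stairs} for the crucial case $n = k$ and an alternating path argument for the inductive step. The base $k = 1$ is trivial, since $|M_1| = n$ already gives a matching of size $n$ representing one matching. For the case $n = k$, the sequence of matching sizes $(1, 2, \ldots, k-1, k, k, \ldots, k)$ with $k$ copies of $k$ is exactly the input of Theorem \ref{stairs} with its $n$ set to $k$, yielding a rainbow matching of size $k$: one edge from each of $k$ distinct $M_i$'s, that is, a matching of size $n = k$ representing $k$ matchings.

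For the general inductive step, assume the conjecture for $k - 1$ and the same $n$. Drop $M_{k-1}$ and $M_k$ from the family; the remaining $2k - 3$ matchings $M_1, \ldots, M_{k-2}, M_{k+1}, \ldots, M_{2k-1}$ have sizes $1, 2, \ldots, k-2, n, n, \ldots, n$ (with $k - 1$ copies of $n$), satisfying the hypothesis for $k - 1$. By induction there is a matching $M$ of size $n$ in $\bigcup_{i \ne k-1, k} M_i$ representing $k - 1$ of these via an injective partial map $\pi$ from indices to edges of $M$. I then try to augment $\pi$ to size $k$ using $M_{k-1}$ or $M_k$: if some edge of $M$ outside the image of $\pi$ lies in $M_{k-1} \cup M_k$, extend $\pi$ directly. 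Otherwise, perform an alternating swap along a component of $M \cup M_k$ (a union of paths and even cycles, since both are matchings), replacing some edges of $M$ with edges of $M_k$ to introduce a representative of $M_k$, and reassign $\pi$ to preserve the prior $k-1$ representations.

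The main obstacle is this reassignment: when an edge $e = \pi(i)$ is removed from $M$ during the swap, we need to find a replacement edge in $M \cap M_i$ to re-represent the index $i$. The staircase sizes $|M_i| = i$ for $i \le k - 2$ are essential here: they guarantee, via a Hall's condition argument on the bipartite graph between the surviving indices $\{1, \ldots, 2k-1\} \setminus \{k-1, k\}$ and the edges of the new $M$, that such reassignments are always possible. This parallels the role of the staircase in the proof of Theorem \ref{stairs}, whose only known proof is topological; a fully rigorous argument may likewise require the topological machinery of Theorem \ref{matcomp} applied to a complex encoding both the matching and representation structure, rather than the purely combinatorial alternating swaps sketched above.
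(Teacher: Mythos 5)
This statement is a \emph{conjecture} in the paper; the authors only supply a proof of the special case $k=2$, by a direct case analysis on the components of $M_2\cup M_3$ (two disjoint cycles vs.\ a single cycle, in the latter case using the singleton $M_1=\{e\}$ as a chord to split the cycle). Your proposal attempts the general case by induction on $k$, which is more ambitious, but it contains a genuine gap in the inductive step -- one you yourself flag.

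Two parts of your argument are sound. The base case $k=1$ is indeed trivial, and your observation that the case $n=k$ reduces exactly to Theorem~\ref{stairs} (the sequence $(1,2,\dots,k-1,k,\dots,k)$ with $k$ copies of $k$ is precisely the staircase sequence for $n=k$, and a rainbow matching of size $k$ is a matching of size $n$ representing $k$ of the $M_i$'s) is correct and is not mentioned in the paper.

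The inductive step, however, does not go through as sketched. Dropping $M_{k-1}$ and $M_k$, applying the hypothesis for $k-1$, and then trying to swap in a representative of $M_{k-1}$ or $M_k$ by alternating-path surgery is not shown to work, and the paper's own $k=2$ proof already illustrates why it should not be expected to. For $k=2$ one starts from $M_3$ alone (representing one matching) and must end with a matching representing two; when $e\notin M_3$ and $M_2\cup M_3$ is a single $2n$-cycle with $M_2\cap M_3=\emptyset$, no alternating swap along $M_2\cup M_3$ introduces $e$, and a full swap along the cycle just trades $M_3$ for $M_2$. The correct matching uses $e$ to split the cycle and takes $M_2$-edges on one side and $M_3$-edges on the other -- a global construction that is not a perturbation of the inductively obtained $M_3$. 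Your claim that the staircase sizes $|M_i|=i$ for $i\le k-2$ guarantee a Hall-type condition making reassignment always possible is unsupported, and in fact points the wrong way: the smaller $M_i$'s are the hardest to re-represent after a swap (if $|M_i|=1$ and $\pi(i)$ is swapped out, there is no alternative edge of $M_i$ at all). So the argument as it stands proves the conjecture only for $k=1$ and for $k=n$, and the key case $1<k<n$ remains open -- as you acknowledge in your last paragraph, some non-greedy (likely topological) argument encoding both the matching structure and the representation structure simultaneously seems to be required.
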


 
\begin{proof}[Proof of the case $k=2$] We have three matchings, $M_1, M_2, M_3$, where $M_1$ is a singleton edge $\{e\}$, and $|M_2|=|M_3|=n$. If $M_2 \cup M_3$ contains two disjoint cycles, then taking the $M_2$ edges from one of them and the $M_3$ edges from the other yields the desired matching. If $M_2 \cup M_3$ consists of a single cycle $C$, then $e$ partitions $C$ into two parts, and taking the $M_2$ 
edges on one side, the $M_3$ edges on the other, and adding $e$, results in the desired matching. 
\end{proof}

\section{Weighted versions}\label{sec:weighted}

Theorem \ref{weightednetworks} below is a weighted generalization of Theorem \ref{networks}. Given an edge-weighting $w$ of $D$ and a subset $E$ of its edges, let the \emph{weight} of $E$, written $w(E)$, be $\sum_{e\in E}w(e)$. 

\begin{theorem}\label{weightednetworks}
 If $|V(D)\setminus \{s,t\}|=n$ then any family $\cf$ of $n+1$   $s-t$-paths of weight $\leq k$ has a rainbow $s-t$ path of weight $\leq k$.
\end{theorem}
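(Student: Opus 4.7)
The plan is to mimic the proof of Theorem~\ref{networks} -- which is the $p=1$, $q=n$ case of Theorem~\ref{manypaths} -- while carrying edge weights through the reduction. For each $P_i \in \cf$, form the matching $M_i := \phi(P_i)$ of size $n+1$ in the bipartite graph $B(\N)$, and equip $B(\N)$ with the edge weights inherited from $\N$: set $w(u'v'') := w(uv)$ for every $uv \in E(\N)$, and $w(x'x'') := 0$ for every identity edge $x'x'' \in W$. Then $w(M_i) = w(P_i) \leq k$ and $w(W) = 0$, so the weight bound on paths becomes a weight bound on matchings.

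It therefore suffices to produce a rainbow matching $R$ of size $n+1$ among the $2n+1$ matchings $W, W, \ldots, W, M_1, \ldots, M_{n+1}$ (with $n$ copies of $W$) satisfying $w(R) \leq k$. Given such an $R$, the linearish arborescence $L := \psi(R) = R \setminus W$ contains, by Claim~\ref{nup} (applied with $|\phi(L)| = |R| = n+1$), at least one $s$-$t$ path $Q$. Since the remaining components of $L$ are vertex-disjoint paths and cycles whose total weight is non-negative (I assume non-negative edge weights, the standard setting for such network problems), one has $w(Q) \leq w(L) = w(R) \leq k$, as required.

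The main obstacle is the weight-bounded rainbow matching step, namely a weighted analogue of Theorem~\ref{stairs}: among $2n+1$ matchings whose sizes dominate the stairs sequence in a bipartite graph, each of weight at most $k$, there should be a rainbow matching of size $n+1$ with weight at most $k$. This does not appear to follow directly from Theorem~\ref{stairs}, which only produces some rainbow matching but not necessarily a light one. My first approach would be a Kalai--Meshulam-style topological argument: let $\mathcal{D}$ be the complex of edge sets in $B(\N)$ that do not contain a rainbow matching of size $n+1$ of weight at most $k$, and attempt to bound $\blambda(\mathcal{D}) \leq 2n$, after which Theorem~\ref{km} would finish the proof. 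Failing that, a direct exchange argument exploiting the rigid structure of the $M_i$ -- each differs from $W$ only along the edges of $P_i$ -- may succeed by iteratively swapping a heavy edge of a tentative rainbow matching for a lighter substitute drawn from a currently unused color class, using alternating paths as in Drisko's original argument.
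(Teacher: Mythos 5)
Your reduction to the bipartite matching setting is set up correctly: $w(M_i)=w(P_i)$, the $W$-copies have weight $0$, and if $R$ is a light rainbow matching of size $n+1$ then $L=\psi(R)$ contains a rainbow $s$-$t$ path of weight $\le w(L)=w(R)$ under nonnegative weights. But the ``main obstacle'' you flag is not a technical nuisance to be patched over --- it is the whole difficulty. The weighted analogue of Theorem~\ref{stairs} you need (rainbow matching of prescribed size \emph{and} of weight $\le k$ from a stairs-shaped family) is strictly stronger than Conjecture~\ref{weighteddrisko}, which the paper itself states as an open problem precisely \emph{after} proving Theorem~\ref{weightednetworks}. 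So your plan proves the theorem modulo a conjecture that the authors explicitly do not know how to prove; indeed, the two approaches you sketch for it (bounding Lerayness of a weight-constrained complex, or a weighted exchange argument) are not known to work, and the weight constraint makes the target family fail to be a simplicial complex, which is exactly what blocks the Kalai--Meshulam route. In short: the reduction discards the special structure of paths in a network, and that structure is what makes the weighted statement provable.

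The paper's actual proof is a direct, elementary argument in the digraph itself: grow a nested sequence of $s$-rooted trees $T_0\subset T_1\subset\cdots$ by, at each step, adding from a currently unrepresented path the edge $e$ at a vertex $v\in T_i$ that minimizes $w(T_iv)+w(e)$ (a Dijkstra/Prim-style greedy choice). The invariant, proved by induction, is that for any path $P$ not yet represented in $T_i$ and any $u\in V(T_i)\cap V(P)$ one has $w(T_iu)\le w(Pu)$; the pigeonhole guarantee that some path is always unrepresented (since $|V\setminus\{s,t\}|=n$ and we have $n+1$ paths) keeps the process alive until $t$ is reached, and the invariant applied to the last edge shows the resulting rainbow $s$-$t$ path has weight at most $k$. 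This avoids the rainbow-matching machinery entirely, which is why it can handle weights where the matching route cannot. If you want to salvage something from your reduction, note that it does show Theorem~\ref{weightednetworks} would follow from a weighted stairs theorem; that is a legitimate (conditional) observation, but it is not a proof.
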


As a special case, note that a collection of $n+1$ $s-t$  paths of length at most $k$ have a rainbow $s-t$ path of length at most $k$ (as seen by giving a weight of 1 to every edge).

Throughout the proof, whenever a vertex $v$ is in a tree $T$ rooted at $s$, we will write $Tv$ for the directed path in $T$ from $s$ to $v$.

\begin{proof}
Construct a nested sequence of $s$-rooted trees
\[
\{s\}=T_0 \subset T_1 \subset T_2 \subset \dots \subset T_a=T
\]
as follows. For each $i$, if $t\in T_i$, the sequence terminates. Otherwise, let $\cf_i$ be the collection of all paths in $\cf$ not used in $T_i$
(as $t$ has not yet been reached, at most $n$ of the $n+1$ paths in $\cf$ are currently represented, so $\cf_i \not= \emptyset$). 
Then let $T_{i+1}$ be 
the tree
obtained from $T_i$ by adding the edge $e_i \in \bigcup \cf_i$ starting at some vertex $v\in V(T_i)$ which minimizes the quantity $w(T_iv)+w(e_i)$.

The sequence terminates at some tree $T$ containing $t$. Since $e_i\in\cf_i$ for each $i$, every $T_i$ is certainly $\cf$-rainbow, so in particular $T$ contains a rainbow $s-t$ path $Tt$.

Claim also, whenever $P \in \cf_i$ (i.e.\ $P$ is a path currently unrepresented in $T_i$) and 
$u \in V(T_i) \cap V(P)$, that
\[ w(T_i u) \leq w(P u).
\]
The theorem follows, as if the final edge $e=e_a$ reaching $t$ in $T$ had out-vertex $u$ and represented the path $P \in \cf$, then
\[
w(Tt)=w(T_{a-1}u)+w(e)\leq w(Pu)+w(e)=w(Pt)\leq k.
\]

 Prove the claim by induction on $i$. If $u=s$, then both paths are empty and their weights are 0.

If $u \neq s$, then there is an edge $e_j=vu\in T_i$ for some $j <i$ which represented a different path $Q \in \cf$.

Moreover, $P$ contains a path from $s \in T_j$ to $u \not \in T_j$, so let $x$ be the last vertex before $u$ on $P$ which is also in $T_j$, and 
$y$ be the vertex following $x$ on $P$ (so possibly $y=u$).
Then $w(T_i v)+w(e_i) \leq
w(T_j x) +w(xy)$ by choice of $e_i$, and $w(T_j x)  \leq w(Px)$ by induction (as $P$ was certainly not yet represented in $T_j$).

Hence
\[
w(T_i u)=w(T_iv)+w(e_i)
\leq w(T_j x)+w(xy)
\leq w(Px)+w(xy)
=w(Py)
\leq w(Pu),
\]
where the last inequality follows from nonnegativity of $w$, as desired.

\end{proof}

A corresponding weighted version of Theorem \ref{drisko} may also be true:

\begin{conjecture}\label{weighteddrisko}
Let $G$ be a bipartite graph with an edge-weighting. Then $2n-1$ matchings of size $n$ and weight $\leq k$ in $G$ have a rainbow matching of size $n$ and weight $\leq k$. 
\end{conjecture}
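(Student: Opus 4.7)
My plan is to adapt the greedy minimum-weight extension argument from the proof of Theorem~\ref{weightednetworks} to the bipartite matching setting, in the spirit of a min-cost augmenting paths algorithm.

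I would iteratively construct rainbow partial matchings $R_0 = \emptyset \subsetneq R_1 \subsetneq \cdots \subsetneq R_n$, with $|R_i| = i$ and a fixed color assignment on each edge. At step $i < n$ the matching $R_i$ uses $i$ of the $2n-1$ colors, leaving at least $2n - 1 - i \geq n > i$ unused matchings, each of size $n > i$. As in the standard proof of Theorem~\ref{drisko}, this slack allows one to find an $R_i$-alternating \emph{rainbow-augmenting path} $P$ between $R_i$-unsaturated vertices whose ``new'' edges come from pairwise distinct unused colors, so that $R_i \triangle E(P)$ remains rainbow. I would choose $R_{i+1} := R_i \triangle E(P)$ to be the augmentation minimizing the resulting total weight, across all such rainbow-augmenting choices.

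The key step is to establish a weight invariant analogous to the inequality $w(T_i u) \leq w(Pu)$ from the proof of Theorem~\ref{weightednetworks}. The natural candidate is the following: \emph{for every unused color $j$ and every set $U$ of vertices saturated by both $R_i$ and $M_j$, the total weight of $R_i$-edges spanned by $U$ is at most that of $M_j$-edges spanned by $U$.} At the terminal step $i = n$ at least one color $j$ remains unused, and taking $U$ to be the vertex set saturated by $R_n$ would then yield $w(R_n) \leq w(M_j) \leq k$, as desired.

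The main obstacle is preserving this invariant under augmentations. In the network proof, each extension consists of a single edge, and the inequality propagates cleanly from the induction hypothesis by local minimality. In the matching setting, a single augmenting path is typically long, consumes many distinct unused colors at once, and may redistribute weight in a way that violates the invariant relative to some \emph{other} unused $M_{j'}$. Handling this will likely require either a Hungarian-algorithm style potential function (with reduced costs driving the induction) or a more global exchange argument that exploits the full slack of $\geq n$ unused colors at each step. A natural fallback would be a reduction to Theorem~\ref{weightednetworks} via the encoding $\phi$ from the proof of Theorem~\ref{manypaths}; however, applied to $2n-1$ matchings of size $n$, such an encoding appears to produce a network with $|V^\circ|$ one larger than Theorem~\ref{weightednetworks} admits for $2n-1$ paths. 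Closing that one-vertex gap amounts to proving a weighted analogue of Theorem~\ref{stairs}, which I expect to be a substantial problem in its own right.
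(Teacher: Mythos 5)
This statement is Conjecture~\ref{weighteddrisko} in the paper and is posed there as an \emph{open problem} --- the authors offer no proof of it, only the remark that the problem is affine-invariant (so nonnegativity of weights may be dropped) and a note that a weighted version of B\'ar\'any's theorem would be a natural companion. There is therefore no paper proof to compare your proposal against.

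That said, your analysis is essentially a correct diagnosis of why the conjecture is hard rather than a proof of it, and you say as much. The greedy-minimum-augmentation scheme is the natural thing to try, and you have accurately located where it breaks: the invariant $w(T_iu) \leq w(Pu)$ in the proof of Theorem~\ref{weightednetworks} propagates because each extension is a \emph{single} edge added by local minimality, whereas a rainbow alternating augmentation is a long path that consumes several new colors at once and can simultaneously worsen the invariant against several unused matchings $M_{j'}$. Your fallback observation is also on the mark: pushing $2n-1$ matchings of size $n$ through the encoding $\phi$ from the proof of Theorem~\ref{manypaths} yields a network whose interior is exactly one vertex too large for Theorem~\ref{weightednetworks} to apply with only $2n-1$ paths, and closing that gap amounts to a weighted analogue of Theorem~\ref{stairs}. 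Since Theorem~\ref{stairs} is currently provable only by a topological argument --- which gives existence but no quantitative control over weights --- that route is indeed a substantial problem in its own right. In short: you have correctly identified the obstruction and the two natural lines of attack, but the gap remains open, both in your proposal and in the paper.
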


Note that in this instance, the problem is invariant under affine transformations, so dropping the non-negativity assumption on the weighting does not make the problem any harder (The weighted version of Theorem \ref{weightednetworks}, by contrast, has small counterexamples). 

Given that the original proof \cite{drisko} of Theorem \ref{drisko} was via B\'ar\'any's Theorem (Theorem \ref{barany}), one may also ask whether a weighted version also holds-in $\R^d$-do $d+1$ $\vv$-convexing sets in of weight $\leq k$ have a rainbow $\vv$-convexing set of weight $\le k$?


    
\section{Cooperative  versions }\label{coopsection}
Hall's theorem is ``cooperative'' in the sense that its assumption is not on individual sets, but on unions of sets. 
There are also cooperative versions of the second type of choice functions results - spanning rainbow sets. Here is  a natural conjectured cooperative version of Theorem \ref{drisko}, which also generalizes Theorem \ref{stairs}:
    
    \begin{conjecture}\label{halldrisko}
\label{coopdriko}
Let $\F=(F_1, \ldots , F_{2k-1})$ be  a system of  sets of edges in a bipartite graph.  If  $\nu(\F_I) \ge \min(|I|,k)$ for every $I \subseteq [2k-1]$  then there exists a rainbow matching of size $k$.
\end{conjecture}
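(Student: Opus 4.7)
The plan is to deduce this from topological Hall (Theorem \ref{matcomp}). Take $V = \bigsqcup_{i=1}^{2k-1} F_i^{(i)}$, where $F_i^{(i)} = \{(e,i) : e \in F_i\}$ is a labeled copy of $F_i$, and let $\C \subseteq 2^V$ consist of sets $R$ whose projection to $E$ is a matching and whose labels are pairwise distinct. Let $\cm$ be the rank-$k$ truncation of the partition matroid on $V$ with parts $F_1^{(1)}, \ldots, F_{2k-1}^{(2k-1)}$. A rainbow matching of size $k$ is exactly an $(\cm, \C)$-marriage, so by Theorem \ref{matcomp} it is enough to verify that $\eta(\C[S]) \ge \rank(\cm.S) = \min(|I_S|, k)$ for every $S \subseteq V$, where $I_S := \{i : S \cap F_i^{(i)} \ne \emptyset\}$.

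Everything then reduces to a single topological connectivity estimate: for any family $\G = (G_1, \ldots, G_n)$ of edge sets in a bipartite graph satisfying $\nu(\G_J) \ge \min(|J|, k)$ for every $J \subseteq [n]$, the rainbow matching complex of $\G$ has $\eta \ge \min(n, k)$. Specialized to matchings of the sizes $\min(i, k)$ this is exactly the estimate that drives the proof of Theorem \ref{stairs}. The plan for the lemma is a double induction on $(n, k)$: fix an auxiliary vertex $(e^*, i^*)$ of $\C(\G)$, and bound $\eta(\C(\G))$ from below by a Mayer--Vietoris comparison with the deletion and the link at $(e^*, i^*)$, each of which arises from a smaller cooperative system.

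The main obstacle is that the cooperative condition does not restrict cleanly. Passing from $\G$ to a subsystem $\G|_S$ one may lose the bound $\nu((\G|_S)_J) \ge \min(|J|, k)$, and forming the link at $(e^*, i^*)$ drops $\nu$ by up to $2$ because one removes all edges meeting the endpoints of $e^*$. Consequently the systems arising in the inductive step satisfy only a weaker form of the hypothesis. The plan to handle this is to strengthen the inductive hypothesis to a flexible cooperative condition of the shape $\nu(\G_J) \ge \min(|J|, k) - \delta_J$ with a controlled defect $\delta_J$, so that the induction closes across the inevitable drops in $\nu$ under restriction and link.

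A fallback, if the topological route proves too delicate, is a direct augmenting argument by induction on $k$: pick a maximum rainbow matching $M$ of size $m < k$, apply the cooperative condition to the unused index set $J = [2k-1] \setminus I_M$ (of size $\ge k$) to produce a matching $N$ of size $k$ in $\bigcup_{j\in J} F_j$, and use an alternating path in $M\cup N$ to augment $M$. The subtle point is that the $N$-edges along the chosen augmenting path must be assignable to distinct labels from $J$, which is itself a Hall-type problem on the subsystem indexed by $J$ and appears to require a secondary application of the cooperative condition.
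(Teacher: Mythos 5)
This statement is Conjecture~\ref{halldrisko} in the paper; it is \emph{not} a theorem there, and the paper offers no proof of it. What the paper does provide nearby are a fractional version (cited from \cite{abkz}) and the pairwise--cooperative special case, Theorem~\ref{coopdrisko}. So there is no ``paper's own proof'' to compare yours against, and a complete argument here would be a new result rather than a reproduction.

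Your reduction via Topological Hall (Theorem~\ref{matcomp}) -- disjointify the labeled copies $F_i^{(i)}$, take $\mathcal{C}$ to be the rainbow--matching complex and $\cm$ a rank-$k$ truncated partition matroid, then chase a connectivity bound -- is a natural line of attack and is in the spirit of the topological proof of Theorem~\ref{stairs}, which Conjecture~\ref{halldrisko} is meant to generalize. But you are candid that the single thing that would make it a proof, namely a lower bound on $\eta$ of the rainbow--matching complex forced by the cooperative hypothesis, is not established: the defect-tracking induction is only gestured at, and the augmenting-path fallback has its own acknowledged gap in assigning the $N$-edges along the augmenting path to distinct indices of $J$. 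As submitted, this is a plan with an honest hole at its center, not a proof; given the problem's status, that is to be expected.

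One technical point worth fixing even at the planning stage: in Theorem~\ref{matcomp}, $\cm.S$ is the contraction of $\cm$ to $S$, so for the rank-$k$ truncation of the partition matroid with parts $F_1^{(1)},\dots,F_{2k-1}^{(2k-1)}$ one gets
$\rank(\cm.S)=\max\bigl(0,\;p(S)-(k-1)\bigr)$ where $p(S)=|\{i: F_i^{(i)}\subseteq S\}|$,
not $\min(|I_S|,k)$ (that formula is the rank of the \emph{restriction} $\cm|_S$, which counts parts merely hit by $S$). In particular the binding case is $S=V$, where one needs $\eta(\mathcal{C})\ge k$; the intermediate sub-unions need only the much weaker $\eta(\mathcal{C}[\F_I])\ge |I|-k+1$. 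This does not by itself invalidate your approach, but the connectivity lemma you set out to prove is misstated, and the correct target makes it clearer where the real difficulty sits: producing $k$ dimensions of connectivity for the full system from the cooperative matching-number condition alone.
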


Theorem \ref{drisko} is the special case where  $\nu(F_i) \geq k$ for every $i$. The cooperative version in which  $\nu(F_{\{i,j\}}) \geq k$ whenever $i \neq j$,
is Theorem \ref{coopdrisko}  below.

 A fractional version of the conjecture was proved in \cite{abkz}. 
A topological version is: 

\begin{conjecture}
Let $G$ be a bipartite graph, and let $M(G)$ be the matching complex of $G$.
Suppose that one side of the graph is of size $2n-1$, and that every $k$ vertices in that side have at least $\min(k,n)$ neighbors in the other side. Then $\eta(M(G)) \ge n$.   
\end{conjecture}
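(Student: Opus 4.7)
The plan is to proceed by induction on $n$, exploiting the deletion-link recurrence for the homotopic connectivity of a simplicial complex: for any vertex $e = ab$ of $M(G)$,
$$\eta(M(G)) \ \ge\ \min\!\bigl(\eta(M(G - e)),\ \eta(M(G - \{a, b\})) + 1\bigr),$$
where $G - e$ denotes removal of the single edge and $G - \{a, b\}$ removal of the two endpoints (together with all their incident edges). The base case $n = 1$ is immediate: the hypothesis $|N(\{a\})| \ge 1$ for every $a \in A$ forces $E(G) \ne \emptyset$, so $M(G)$ is nonempty and $\eta(M(G)) \ge 1$.

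For the inductive step, the first subtask is to handle the link $M(G - \{a, b\})$. Since $|A \setminus \{a\}| = 2n - 2$ is of the wrong parity for the hypothesis, I would either strengthen the induction hypothesis to accommodate both parities of $|A|$, or further delete an auxiliary $a' \in A$ and apply induction to $G - \{a, a', b\}$ after checking the Hall-type condition for parameter $n - 1$. Concretely, for any $S \subseteq A \setminus \{a, a'\}$ one has $|N_{G - b}(S)| \ge |N_G(S)| - 1 \ge \min(|S|, n) - 1$, so the vertex $b$ must be chosen to avoid being the unique ``critical'' neighbor of any tight subset --- this is possible whenever the Hall condition has slack.

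For the deletion term $\eta(M(G - e))$, I would proceed by a secondary downward induction on $|E(G)|$: removing a single edge preserves the Hall hypothesis unless $e$ is critical for some tight set $S_0$ (i.e.\ $|N(S_0)| = \min(|S_0|, n)$ and $b \in N(S_0)$ is the only vertex witnessing the equality). In the tight case, one splits the graph along $S_0$ and writes $M(G)$ as a join $M(G[S_0 \cup N(S_0)]) * M(G - S_0 - N(S_0))$, using the additivity $\eta(X * Y) \ge \eta(X) + \eta(Y)$ to combine two applications of the inductive hypothesis applied to the two pieces.

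The principal obstacle is the fully tight extremal configuration $|N(A)| = n$, in which $G$ embeds into $K_{2n - 1, n}$ and $M(G)$ is a subcomplex of the chessboard complex $\Delta_{2n-1, n}$. In this regime the conjecture collapses to the topological content of Drisko's theorem (Theorem \ref{drisko}), so a successful proof would amount to a new topological derivation of Drisko's theorem. I expect that closing this extremal case will require either the sharp chessboard-complex connectivity bounds of Bj\"orner--Lov\'asz--Vre\'cica--\v{Z}ivaljevi\'c, or an adaptation of the topological cascade underlying the only known (topological) proof of Theorem \ref{stairs}. Bridging the generic inductive argument with this extremal configuration is where I anticipate the deepest technical difficulty.
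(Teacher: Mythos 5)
The statement you were asked to prove is presented in the paper as an open \emph{conjecture}, not a theorem; the paper gives no proof of it and merely remarks that it is a ``topological version'' of the cooperative Drisko statement (Conjecture~\ref{halldrisko}), for which only the fractional analogue is known. So there is no proof in the paper to compare your attempt against, and it is unsurprising that your outline, although structured sensibly, does not close. You correctly identify the standard machinery: the deletion--link inequality $\eta(M(G)) \ge \min\bigl(\eta(M(G-e)),\ \eta(M(G-\{a,b\}))+1\bigr)$ for an edge $e=ab$ of the matching complex is exactly the tool used in this literature, and your base case is fine.

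Two points deserve sharpening. First, your Hall-restoration estimate in the link step does not work as written: for $S\subseteq A\setminus\{a,a'\}$ with $|S|<n$ one has $\min(|S|,n)-1 = |S|-1 < |S| = \min(|S|,n-1)$, so any tight $S$ whose only witness of the Hall bound is $b$ genuinely breaks the hypothesis for parameter $n-1$. A single well-chosen $e=ab$ therefore need not exist when there are many overlapping tight sets, and the tightness case split you sketch for the deletion term would have to be woven into the link term as well --- which is where the argument stalls. Second, your observation that the fully tight regime is ``the topological content of Drisko'' is correct and is precisely why no one expects a soft deletion--link cascade to suffice: the paper stresses that even Theorem~\ref{stairs}, a special case of the parent Conjecture~\ref{halldrisko}, admits only a topological proof, and no elementary inductive argument is known. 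Your plan is a reasonable line of attack and you are right to flag the extremal, chessboard-like configuration as the real obstruction, but at present this conjecture remains open, and your attempt should be read as a proposed strategy with an honestly unresolved core rather than as a proof.
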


   Here is a cooperative conjecture on networks.  Given a set $F \subseteq E(\N)$ let $R(F)$ be the set of vertices $x$ having an $F$-path from $s$ to $x$.

    \begin{conjecture}
    Let $n=|V^\circ(D)|$, and let $\cf=(F_1, \ldots ,F_{n+1})$ be a family of sets of edges in the network.
    If, for every $I \subseteq [n+1]$, either  $\cf_I$ contains an $s-t$-path or $|R(\cf_I)|\ge |I|$ 
    then there exists a rainbow $s-t$-path. 
            \end{conjecture}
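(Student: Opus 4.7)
The plan is to adapt the greedy tree-building technique from the proofs of Theorems \ref{networks} and \ref{weightednetworks} to the cooperative hypothesis. We maintain a rainbow out-arborescence $T$ rooted at $s$, initially $T=\{s\}$, adding edges one at a time while preserving the property that $T$ uses distinct colors from $\cf$. If $t \in V(T)$ at any point, extract the unique $s$-$t$ path in $T$; it is rainbow by construction.

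At each stage with $t \notin V(T)$, let $J$ be the set of colors used in $T$, so $|V(T)|=|J|+1$, and let $I = [n+1] \setminus J$. Apply the cooperative hypothesis to $I$. If $\cf_I$ contains an $s$-$t$ path $P$, then since $s \in V(T)$ and $t \notin V(T)$, $P$ must cross out of $V(T)$ via some edge $uv$ with $u \in V(T)$ and $v \notin V(T)$; this edge has a color in $I$, and we add it to $T$. Otherwise $|R(\cf_I)| \ge |I|$, and again any $\cf_I$-edge with tail in $V(T)$ and head outside $V(T)$ can be added in the same fashion. The difficulty arises only when no such extending $\cf_I$-edge exists, in which case every $\cf_I$-path from $s$ stays inside $V(T)$, so $R(\cf_I) \subseteq V(T) \setminus \{s\}$. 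The chain $n+1-|J| \le |R(\cf_I)| \le |J|$ then forces $|J| \ge (n+1)/2$.

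This stuck subcase, which can only occur once more than half of the colors have been consumed, is where the main difficulty lies. To address it I would attempt a rotation argument: any $v \in R(\cf_I)$ is reached inside $V(T)$ by an $\cf_I$-edge $uv$ with $u \in V(T)$ and color $c' \in I$; replacing the unique color-$c$ edge of $T$ entering $v$ by this new edge produces a rainbow arborescence on the same vertex set with color $c$ now liberated. Reapplying the cooperative hypothesis to the updated index set $(I \setminus \{c'\}) \cup \{c\}$ may expose an extending edge not previously available, or may unlock an $s$-$t$ path in $\cf_{I'}$ whose boundary-crossing edge bears the new free color $c$.

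The hard part will be showing that a finite sequence of such rotations either strictly enlarges $V(T)$ or brings $t$ into $T$, rather than cycling among color configurations of the same vertex set. This resembles augmenting-path arguments in matroid intersection, and a rigorous proof will likely require a carefully chosen potential function --- perhaps a lexicographic combination of $|V(T)|$ with the depths of vertices in $T$ and with the multiset of colors that could still be rotated out --- together with a subtle inductive use of the cooperative hypothesis on the subsets $I'$ that arise through the rotation sequence.
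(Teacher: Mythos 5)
The statement you are trying to prove is presented in the paper as an open \emph{conjecture} (appearing in Section~\ref{coopsection}), not a theorem, so there is no paper proof to compare against --- your attempt must stand on its own. The greedy arborescence-growing approach, imported from the paper's proof of Theorem~\ref{weightednetworks}, is a natural starting point, and you correctly diagnose where it breaks: when no $\cf_I$-edge leaves $V(T)$, the hypothesis only guarantees $|R(\cf_I)| \ge |I|$ rather than a crossing edge, and the counting $n+1-|J| \le |R(\cf_I)| \le |V(T)\setminus\{s\}| = |J|$ shows this stuck case is reachable once $|J| \ge (n+1)/2$. Up to that point the write-up is sound.

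The rotation idea, however, has a concrete gap beyond the admitted lack of a termination argument: replacing the tree edge into $v$ by a new $\cf_I$-edge $uv$ does not in general yield an arborescence. If $u$ lies in the subtree of $T$ rooted at $v$, the swap either creates a directed cycle or disconnects a subtree from $s$. You would need to certify that the vertex $v \in R(\cf_I)$ and the edge $uv$ can always be chosen so that $u$ is not a descendant of $v$ in the current $T$; nothing in the hypothesis obviously guarantees such a choice, and no such refinement is attempted. Even granting this, the deeper obstacle is that after a swap the index set changes to $(I\setminus\{c'\})\cup\{c\}$ with $|I|$ unchanged, and there is no exhibited monotone quantity forcing progress: the chain of swaps could plausibly cycle forever among rainbow arborescences on the same vertex set, and the proposed ``lexicographic combination of $|V(T)|$ with depths and rotatable colours'' is not shown to be monotone under a rotation. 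Given that the non-cooperative base result Theorem~\ref{networks} is itself proved in the paper via the topological Theorem~\ref{stairs} (and the paper explicitly flags a topological route through the complex $NREACH$), it is quite possible that this conjecture genuinely requires a topological or other global argument, and that no local rotation scheme will close the gap. As it stands the proposal is an interesting reduction of the conjecture to a sub-problem, not a proof.
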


            The complex in action here is the collection $NREACH$ of sets of edges, that do not contain an $s-t$ path.
            It is not hard to show that $\blambda(NREACH) \le n$, which can be combined with Theorem \ref{km} to yield Theorem \ref{networks}.  
            
            \begin{conjecture}
            $\blambda(NREACH)$ is the maximal number of vertices in a family of innerly disjoint $s-t$ paths. 
            \end{conjecture}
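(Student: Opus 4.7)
Write $m^{\ast}$ for the right-hand side, i.e.\ the maximum of $\sum_{i=1}^{r} |V^{\circ}(P_i)|$ over all families $\mathcal{P} = (P_1, \ldots, P_r)$ of internally vertex-disjoint $s$-$t$ paths in $D$. The aim is to prove $\blambda(NREACH) = m^{\ast}$ by establishing the two inequalities separately.

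\emph{Lower bound.} Fix a family $\mathcal{P}$ attaining $m^{\ast}$, and set $a_i = |V^{\circ}(P_i)|$, so $|E(P_i)| = a_i + 1$. Restrict $NREACH$ to the edge set $S_0 = \bigcup_i E(P_i)$. Internal disjointness makes the $E(P_i)$ pairwise disjoint, and at every internal vertex of any $P_i$ the only edges of $S_0$ incident to it are the two edges of $P_i$ itself; hence the $s$-$t$ paths using only edges from $S_0$ are exactly $P_1, \ldots, P_r$, and consequently
\[
NREACH[S_0] \;=\; \partial \Delta^{a_1} \ast \partial \Delta^{a_2} \ast \cdots \ast \partial \Delta^{a_r},
\]
where $\Delta^{a_i}$ is the full simplex on $E(P_i)$. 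Since $\partial \Delta^{a_i} \simeq S^{a_i - 1}$, this join is homotopy equivalent to $S^{\sum_i (a_i-1) + (r-1)} = S^{m^{\ast} - 1}$, giving $\lambda(NREACH[S_0]) = m^{\ast}$, and hence $\blambda(NREACH) \ge m^{\ast}$.

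\emph{Upper bound.} It suffices to show, for every $S \subseteq E(D)$, that $\lambda(NREACH[S]) \le m(S)$, where $m(S)$ is the analogue of $m^{\ast}$ inside the sub-network $(V(D), S)$. I would proceed by induction on $|S|$. Any $e \in S$ lying on no $s$-$t$ path of $(V,S)$ is a cone point of $NREACH[S]$ (adding $e$ to any face never creates an $s$-$t$ path), so such edges may be stripped without changing the homotopy type or $m(S)$; hence assume every $e \in S$ lies on some $s$-$t$ path. Fix such an edge $e = (u, w)$ and apply the Mayer--Vietoris bound to the decomposition $NREACH[S] = NREACH[S \setminus e] \cup \operatorname{star}(e)$:
\[
\lambda(NREACH[S]) \;\le\; \max\bigl\{\lambda(NREACH[S \setminus e]),\; \lambda(\operatorname{link}(e)) + 1\bigr\}.
\]
The first term is at most $m(S \setminus e) \le m(S)$ by induction, reducing the task to $\lambda(\operatorname{link}(e)) \le m(S) - 1$. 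Here $\operatorname{link}(e)$ consists of all $A \subseteq S \setminus \{e\}$ for which $A \cup \{e\}$ contains no $s$-$t$ path, i.e.\ $A$ contains neither an $s$-$t$ path nor a pair of internally disjoint $s$-$u$ and $w$-$t$ paths.

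\emph{The main obstacle} is to identify $\operatorname{link}(e)$ with the $NREACH$ complex of an explicit smaller network, so that the induction closes. A naive contraction of $e$ introduces spurious $s$-$t$ walks traversing $e$'s endpoints in the wrong order ($s \to \cdots \to w \to \cdots \to u \to \cdots \to t$), which are not reflected by any face of $\operatorname{link}(e)$, so the contracted network has too many forbidden paths. I expect the cleanest fix is to pass through the bipartite gadget $B(\mathcal{N})$ used in the proof of Theorem \ref{manypaths}: each internal vertex of $D$ is encoded as a single matching edge, $s$-$t$ paths in $D$ correspond to certain matchings in $B(\mathcal{N})$, and deletion of $e$ becomes a transparent vertex-deletion in $B(\mathcal{N})$ whose effect on the associated matching complex removes precisely one internal vertex from any extremal family. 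Under this encoding, the required combinatorial inequality $m_{D^e}(S \setminus e) + 1 \le m(S)$ reduces to an augmenting-path count of the same flavour as the reduction of Theorem \ref{manypaths} to Theorem \ref{stairs}. A genuinely different tactic would be to construct an acyclic discrete Morse matching on $NREACH[S]$ whose critical cells are in bijection with the internal vertices used by an optimal disjoint-path family, turning the upper bound into a combinatorial identity and bypassing the link computation altogether.
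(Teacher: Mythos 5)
This statement appears in the paper only as a \emph{conjecture}; the authors offer no proof, so there is nothing to compare your argument against, and the relevant question is simply whether your proposal closes the conjecture. It does not. Your lower bound is correct and cleanly argued: restricting $NREACH$ to the edge set of an internally disjoint family of $s$-$t$ paths does indeed give the join $\partial\Delta^{a_1} \ast \cdots \ast \partial\Delta^{a_r} \simeq S^{\sum a_i - 1}$, since internal disjointness and directedness together force every $s$-$t$ path in $\bigcup E(P_i)$ to be one of the $P_i$. This establishes $\blambda(NREACH) \ge m^{\ast}$, which is the easy half (and is implicit in the fact that the conjecture improves the paper's stated bound $\blambda(NREACH)\le |V^\circ(D)|$ to a sharp one).

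The upper bound is where the content lies, and your argument stops exactly at the hard step. The Mayer--Vietoris inequality $\lambda(NREACH[S]) \le \max\{\lambda(NREACH[S\setminus e]), \lambda(\operatorname{link}(e))+1\}$ is correct, and the induction on $S\setminus e$ is fine, but you then need $\lambda(\operatorname{link}(e)) \le m(S)-1$, and you acknowledge you cannot prove it. The difficulty is genuine: $\operatorname{link}(e)$ for $e=(u,w)$ is the complex of edge sets containing neither an $s$-$t$ path nor a pair of disjoint $s$-$u$ and $w$-$t$ paths, which is an intersection of two reachability-type constraints and is \emph{not} the $NREACH$ complex of any single network obtained by an elementary operation on $D$ (as you correctly observe, contraction of $e$ creates spurious $s\to\cdots\to w\to\cdots\to u\to\cdots\to t$ paths). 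Neither of the two escape routes you sketch -- the bipartite gadget $B(\mathcal{N})$ nor a discrete Morse matching -- is carried out, and without one of them the induction does not close. In short, you have a proof of one inequality and a plausible but incomplete programme for the other; the conjecture remains open.
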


Is there a cooperative version of Theorem \ref{barany}?
Somewhere over the rainbow there might well be one. Presently, only  a few humble results exist in this direction. One of them concerns pairwise cooperation:

\begin{theorem}\label{hpt}\cite{hpt}
\begin{enumerate}
    \item Let $S_1, \ldots, S_{d+1}$ be subsets of $\mathbb{R}^d$, and let $\vec{v} \in \mathbb{R}^d$. If for every $1\le i<j\le d+1$ we have $\vec{v} \in \conv(A_i \cup A_j)$ then there exists a rainbow set $R$  such that $\vec{v}  \in \conv(R)$. 
    
    \item Let $S_1, \ldots, S_{d}$ be non-empty sets in $\mathbb{R}^d$, 
all contained in the same closed half-space. Suppose that  for every $1\le i<j\le d$ we have $v\in \cone(A_i \cup A_j)$. Then there exists a rainbow set $R$ for the sets $A_i$, such that $v \in \cone(R)$.

\end{enumerate}

\end{theorem}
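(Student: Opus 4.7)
\smallskip

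I will translate so that $\vec v=\vec 0$ and focus on part (1); part (2) can then be reduced to part (1) via central projection. The plan for part (1) is to adapt B\'ar\'any's original minimization proof of the colorful Carath\'eodory theorem. Suppose for contradiction that no rainbow choice $R=\{x_1,\dots,x_{d+1}\}$ with $x_i\in S_i$ satisfies $\vec 0\in\conv(R)$, and among all rainbow sets pick $R$ minimizing $\mathrm{dist}(\vec 0,\conv(R))$. Let $p=\sum_i\alpha_i x_i$ be the closest point, $I=\{i:\alpha_i>0\}$ the active set, and $H$ the hyperplane through $p$ perpendicular to $\vec 0-p$, with open half-space $H^+$ containing $\vec 0$. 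Standard arguments give $\{x_i:i\in I\}\subseteq H$ and $|I|\le d$.

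If $|I|\le d-1$, then two colors $i,j$ are unused; the pairwise hypothesis $\vec 0\in\conv(S_i\cup S_j)$ forces some $s\in S_i\cup S_j$ to lie in $H^+$, and since $s$ has an unused color the standard B\'ar\'any swap yields a strictly closer rainbow set, a contradiction. If $|I|=d$ with unique unused color $i_0$, then $S_{i_0}\cap H^+\ne\emptyset$ is handled identically, so the essential subcase is $S_{i_0}\subseteq H^-$ (the closed complement of $H^+$). In this subcase, applying the pairwise hypothesis to each pair $(i_0,j)$ with $j\in I$ forces a point $s_j\in S_j\cap H^+$, since no convex contribution from $S_{i_0}\subseteq H^-$ can cross into $H^+$.

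This last subcase is the main obstacle. A naive swap of $x_j$ for $s_j$ need not strictly reduce the distance, because the coefficient $\alpha_j$ in the optimal combination for $p$ may be large enough to overshoot past $\vec 0$. To close the case I would lex-minimize the pair $(\mathrm{dist}(\vec 0,\conv(R)),\,|I|)$, and argue that a lex-minimizer cannot sit in this subcase: using all $d$ points $s_1,\dots,s_d\in H^+$ simultaneously, together with the Carath\'eodory witnesses for each $\vec 0\in\conv(S_{i_0}\cup S_j)$, one should construct a rainbow $R'$ with equal distance but strictly smaller active set, reducing to the previous case where a strict reduction is available. The technical crux is making this replacement work across all $j\in I$ at once rather than one coordinate at a time; one likely needs to partition the $\alpha_j$'s between the $s_j$'s and the points of $\conv(S_{i_0})$ appearing in the Carath\'eodory witnesses so that the new optimal point lies on a lower-dimensional face of $\conv(R')$.

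For part (2), I would apply central projection. Since all $S_i$ lie in a closed half-space $\{\langle n,x\rangle\ge 0\}$, fix a transverse affine hyperplane $H_0=\{\langle n,x\rangle=1\}$ and map each $a\in\bigcup S_i$ with $\langle n,a\rangle>0$ to $\bar a=a/\langle n,a\rangle\in H_0\cong\mathbb{R}^{d-1}$. This conversion turns $\vec v\in\cone(A)$ into $\bar{\vec v}\in\conv(\bar A)$ and preserves the pairwise hypothesis, so the $d$ projected sets $\bar S_1,\dots,\bar S_d$ in $\mathbb{R}^{d-1}$ fall under part (1) in dimension $d-1$ (which requires exactly $(d-1)+1=d$ sets, matching the hypothesis). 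Pulling the rainbow witness back produces a rainbow set in the original problem whose cone contains $\vec v$; boundary points $\langle n,a\rangle=0$ can be dealt with by perturbing into the open half-space and passing to the limit.
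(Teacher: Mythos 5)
Theorem \ref{hpt} is stated in the survey purely as a citation to Holmsen, Pach, and Tverberg \cite{hpt}; the paper itself contains no proof of it, so there is nothing in-paper to compare your attempt against.

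Your attempt for part (1) is incomplete, and you say so yourself. You correctly identify exactly where B\'ar\'any's minimization breaks under the weaker pairwise hypothesis: in the case $|I|=d$ with $S_{i_0}\subseteq H^-$, the only color available for a free swap is the inactive $i_0$, and the pairwise condition no longer guarantees $S_{i_0}\cap H^+\neq\emptyset$; the points $s_j\in S_j\cap H^+$ that the hypothesis does supply all carry \emph{active} colors $j\in I$, and deleting an active $x_j$ can destroy the representation of $p$, so $\conv(R')$ need not contain $p$ at all. (That, rather than the ``overshooting'' you describe, is the real obstruction: after removing an active vertex you cannot assume the new hull even gets within distance $\|p\|$ of the origin.) What you propose for this case -- lex-minimizing over $(\operatorname{dist},|I|)$ and replacing all $j\in I$ simultaneously while splitting the weights $\alpha_j$ between the $s_j$'s and Carath\'eodory witnesses from $\conv(S_{i_0})$ -- is a program, not an argument; this is precisely the step on which the entire theorem hinges, and it is where the published proof in \cite{hpt} does substantive work that your sketch does not reproduce. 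For part (2), the central-projection reduction to part (1) in dimension $d-1$ is a reasonable idea, but the statement allows arbitrary non-empty sets $S_i$, so your ``perturb into the open half-space and pass to the limit'' treatment of boundary points needs justification that is not automatic without finiteness or compactness hypotheses, and you would need to check that the projected rainbow witness pulls back to one whose cone actually contains $\vec v$ rather than only approaching it in the limit.
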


A similar extension exists for Theorem \ref{drisko}. For a system $\F$ of sets of edges, denote by $\nu_r(\F)$ the maximal size of a rainbow matching. 

\begin{theorem}\label{coopdrisko}\cite{abck}
Let $\F=(F_1, \ldots , F_{2k-1})$ be  a system of bipartite sets of edges, sharing the same bipartition. If each $F_i$ is non-empty, and  $\nu(F_i\cup F_j)\ge k$ for every $i<j \le 2k-1$, then $\nu_r(\F)\ge k$.
\end{theorem}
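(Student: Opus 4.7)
My plan is topological, modelled on the derivation of Drisko's theorem (Theorem \ref{drisko}) from the Kalai-Meshulam theorem (Theorem \ref{km}). Let $\cc$ be the simplicial complex on $E(G)$ whose faces are the edge sets $A$ with $\nu(A) < k$; a rainbow matching of size $k$ in $G$ is exactly a rainbow set in $\cc^c$. Because, in a bipartite graph, the matching complex is the intersection of two partition matroids (one for each side of the bipartition), the Briggs-Kim Leray-number bound for matroid-intersection complexes gives $\blambda(\cc) \le 2k-2$. If each $F_i$ individually satisfied $\nu(F_i) \ge k$, then Theorem \ref{km} applied to $F_1,\dots,F_{2k-1}$ would instantly furnish a rainbow matching of size $k$; this is the standard topological proof of Theorem \ref{drisko}.

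The cooperative hypothesis only gives $F_i\cup F_j \in \cc^c$, so a direct appeal to Theorem \ref{km} is unavailable. My plan is to establish a cooperative Kalai-Meshulam-type statement adapted to matroid-intersection complexes: if $\blambda(\cc) \le d$ and $F_1,\dots,F_{d+1}$ are non-empty with every pairwise union in $\cc^c$, a rainbow set in $\cc^c$ still exists. For each pair $\{i,j\}$ I would fix a witness matching $M_{ij}\subseteq F_i\cup F_j$ of size $k$, and use these witnesses to build an auxiliary simplicial complex on $[2k-1]$ whose connectivity can be computed and then fed into Theorem \ref{matcomp}. The convex-geometry analogue in Theorem \ref{hpt} is the template: first isolate the pair-to-full-rainbow principle that makes it work there, then transplant the argument to the matching complex.

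The main obstacle is precisely this cooperative passage: no general cooperative Kalai-Meshulam statement holds at the Leray-bound threshold, so one must exploit the specific matroid-intersection structure of matchings in a bipartite graph. If the cleanest topological route collapses, a backup strategy is induction on $k$ via alternating paths: starting from a maximum rainbow matching $R$ of size $r<k$, the pairwise condition $\nu(R\cup F_i\cup F_j)\ge k>r$ for two unused colours $i,j$ forces an $R$-augmenting alternating path inside $R\cup F_i\cup F_j$, and the task is to select and recolour this path so that the enlarged matching remains rainbow. Controlling the length of the augmenting path, and showing that only two new colours suffice for the swap, is the delicate point; I expect the matroid-intersection Leray bound (or a direct structural argument about $\cc$) to re-enter here to bound the combinatorial complexity of admissible augmentations.
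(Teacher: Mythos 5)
The paper explicitly notes that ``the proof of Theorem \ref{coopdrisko} uses a cooperative result on networks, which we omit''; that is, the route in \cite{abck} passes through a cooperative analogue of Theorem \ref{networks} and the $\phi/\psi$ dictionary of Section 7, not through a topological covering-complex argument. So your proposal is a genuinely different route, but more importantly it is not yet a proof: both branches of your plan have unresolved holes.

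Your primary, topological branch hinges on a cooperative Kalai--Meshulam statement at the sharp Leray threshold (``if $\blambda(\cc)\le d$ and $F_1,\dots,F_{d+1}$ are non-empty with all pairwise unions in $\cc^c$, then there is a rainbow set in $\cc^c$''). You correctly observe that no such statement holds in general, and indeed the cooperative version that does exist, Theorem \ref{jinha}, requires a condition on all links $lk_\C(\cs_I)$, which is substantially stronger than the pairwise-union hypothesis and is not something you derive here. Without either deriving that link condition from $\nu(F_i\cup F_j)\ge k$ or proving a bespoke cooperative KM theorem specific to the matroid-intersection structure, this branch does not produce the theorem. The gap is therefore exactly the one you flag, and flagging it is not closing it.

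The backup alternating-path branch also has a gap that you do not flag. Starting from a maximum rainbow matching $R$ of size $r<k$, taking unused colours $i,j$, and augmenting along a path $P\subseteq R\triangle M$ for some matching $M\subseteq F_i\cup F_j$ with $|M|\ge k$: if $P$ has $\ell$ old ($R$) edges and $\ell+1$ new ($M$) edges, you free the $\ell$ colours of the removed edges and gain colours $i,j$, giving $\ell+2$ colours for $\ell+1$ new edges. But those new edges lie in $F_i\cup F_j$, and there is no reason any of them should lie in the $\ell$ freed sets $F_l$. So you cannot in general distribute the freed colours onto the new edges; the naive recolouring works only when the augmenting path has at most two $M$-edges, and you have no control forcing that. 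This is precisely why the proof of Drisko-type statements (Theorem \ref{drisko}, Theorem \ref{stairs}, Theorem \ref{manypaths}) does not proceed by this kind of local augmentation but rather by the topological Hall / network machinery. Your instinct that ``the matroid-intersection Leray bound re-enters here'' is plausible, but as written it is a hope rather than an argument. To repair the proposal you should either (a) formulate and prove the exact cooperative KM statement you need, perhaps by verifying the hypotheses of Theorem \ref{jinha} for $\cc=\{A:\nu(A)<k\}$ under the pairwise-union condition, or (b) go through the network route the paper gestures at: set up the bipartite-graph-to-network translation of Section 7 and prove a pairwise-cooperative version of Theorem \ref{networks}.
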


The proof of Theorem \ref{coopdrisko} uses a cooperative result on networks, which we omit. 

J. Kim \cite{jinhaprivate} proved a cooperative version of Theorem \ref{kmleray}:

\begin{theorem}\label{jinha}
Let $\C$ be a simplicial complex and let $V=V(\C)$. Assume that $V \not \in \C$. Let $\cs=(S_1, \ldots,S_m)$ be a system of subsets of $V$. Suppose that for each
$I \subseteq [m]$ either  $\cs_I \not \in \C$ or $lk_\C(\cs_I) $ is $m-1-|I|$-Leray. Then $\cs$ has a rainbow set not belonging to $\C$.
\end{theorem}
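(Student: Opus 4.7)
The plan is to prove the theorem by induction on $m$, treating it as a cooperative extension of the Kalai--Meshulam theorem (Theorem \ref{km}), which is the special case where every $\cs_I \notin \C$ for nonempty $I$. Setting $I = \emptyset$ in the hypothesis yields that $\C$ itself is $(m-1)$-Leray, which is precisely the baseline needed to apply Theorem \ref{km} directly in the event that every $S_i \notin \C$; the conditions at larger $I$ permit some $\cs_I$ to lie in $\C$ at the cost of demanding a correspondingly more Leray link.

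For the inductive step, the hypothesis at $I = [m]$ (with $(-1)$-Leray interpreted as void) forces $\cs_{[m]} \notin \C$, so I would fix a vertex $v$ in a minimal non-face of $\C$ contained in $\cs_{[m]}$, choose $i_0$ with $v \in S_{i_0}$, and apply the induction to the shortened system $(S_j \setminus \{v\})_{j \ne i_0}$ against the link complex $\C' = \text{lk}_\C(\{v\})$. A rainbow set $R$ for $(S_j)_{j \ne i_0}$ inside $V \setminus \{v\}$ with $R \notin \C'$ would yield $R \cup \{v\} \notin \C$, the desired $m$-rainbow non-face of $\C$ after appending $v$ as the representative of $S_{i_0}$.

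The hard part is verifying that the cooperative hypothesis survives this reduction. For $J \subseteq [m] \setminus \{i_0\}$, the induction requires that either $\cs_J \cup \{v\} \notin \C$ or $\text{lk}_\C(\cs_J \cup \{v\})$ is $(m-2-|J|)$-Leray (using $\text{lk}_{\C'}(\cs_J \setminus \{v\}) = \text{lk}_\C(\cs_J \cup \{v\})$). The original hypothesis, applied at $J \cup \{i_0\}$, delivers only the analogous statement about $\text{lk}_\C(\cs_J \cup S_{i_0})$, a link at a strictly larger face. Bridging from the larger link to the smaller will likely demand an iterative Mayer--Vietoris argument that peels off vertices $u \in S_{i_0} \setminus \{v\}$ and propagates Leray bounds through the long exact sequence relating $\tilde H_*(\text{lk}_\C(A))$ and $\tilde H_*(\text{lk}_\C(A \cup \{u\}))$ as $A$ interpolates between $\cs_J \cup \{v\}$ and $\cs_J \cup S_{i_0}$. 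If this line of attack proves too brittle, a cleaner alternative is to translate the statement via Alexander duality (Theorem \ref{alexander}) into a homotopy connectivity assertion on the dual complex and then apply topological Hall (Theorem \ref{matcomp}) directly, since the cooperative hypothesis is a natural match for the deficiency-type condition underlying topological Hall.
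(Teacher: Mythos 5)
The paper does not actually prove Theorem \ref{jinha}; it is attributed to J.\ Kim via private communication and stated without proof, so there is no argument in the text to compare your attempt against. I can therefore only assess your plan on its own merits.

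Your inductive framework (pass to $\text{lk}_\C(v)$ for a well-chosen $v$, shrink the system, and try to re-verify the cooperative hypothesis) is the natural one, and you have correctly located the crux: showing that the hypothesis survives the descent to the link. But the gap you flag is not merely a technicality to ``bridge''---it is a genuine obstruction as stated, and the Mayer--Vietoris hand-wave does not obviously close it. Concretely, for $J \subseteq [m]\setminus\{i_0\}$ with $\cs_J \cup \{v\} \in \C$, your induction needs $\text{lk}_\C(\cs_J\cup\{v\})$ to be $(m-2-|J|)$-Leray. The hypothesis at $I=J$ (in the worst branch) gives only that $\text{lk}_\C(\cs_J)$ is $(m-1-|J|)$-Leray, and the link of a vertex in a $d$-Leray complex is \emph{not} in general $(d-1)$-Leray: the one-dimensional complex with vertices $\{a,v,b\}$ and edges $\{a,v\},\{v,b\}$ is $1$-Leray, yet $\text{lk}(v)=\{\{a\},\{b\},\emptyset\}$ has $\tilde H_0\neq 0$ and so is not $0$-Leray. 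The hypothesis at $I=J\cup\{i_0\}$ is no better, since it speaks of $\text{lk}_\C(\cs_J\cup S_{i_0})$, the link at a face that may be much larger than $\cs_J\cup\{v\}$, and Leray bounds do not propagate up or down along nested faces without extra input. So neither branch of the cooperative condition supplies what the inductive step needs, and there is no clean monotonicity of Lerayness to lean on.

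There are also two smaller points worth tightening. First, you need $S_j\setminus\{v\}\subseteq V(\text{lk}_\C(v))$ for $j\neq i_0$ in order for the shrunk system to live inside the link; if some $u\in S_j$ has $\{u,v\}\notin\C$, you can only directly produce the $2$-element rainbow non-face $\{u,v\}$, which finishes things if a partial rainbow set suffices but not if you are aiming, as you say, for an ``$m$-rainbow non-face.'' Second, the Alexander duality plus topological Hall route that you suggest as a fallback is plausible in spirit---Theorems \ref{alexander}, \ref{matcomp}, and \ref{km} are indeed mutually derivable---but the translation of the \emph{cooperative} link condition into a connectivity bound on $D(\C)$ restricted to transversal-type vertex sets is exactly where the content lies, and your sketch does not attempt it. As it stands the proposal is a reasonable plan with the key lemma missing, not a proof.
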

 
(Here $lk_\C(S)$
is the link of $S$ in $\C$.)
Using this theorem it is possible to prove another cooperative generalization of Theorem \ref{barany}

\begin{theorem}
Let $t \ge 1$, If $S_1, \ldots ,S_{d+t}$ are subsets of $\mathbb{R}^d$ satisfy the condition that for every $I\subseteq [d+t]$ there holds $\vec{v} \in conv(S_I)$ then there exists 
a rainbow set $S$ such that $\vec{v} \in conv(S)$.
\end{theorem}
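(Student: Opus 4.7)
The plan is to apply Jinha Kim's cooperative Leray theorem (Theorem \ref{jinha}) to the simplicial complex $\C := \{A \subseteq \mathbb{R}^d : \vec{v} \notin \conv(A)\}$, taken on the ground set $V := \bigcup_{i=1}^{d+t} S_i$ with system $\cs := (S_1, \ldots, S_{d+t})$. A rainbow set $R$ with $R \notin \C$ is precisely a rainbow set whose convex hull contains $\vec{v}$, which is the desired conclusion. The side hypothesis $V \notin \C$ of Theorem \ref{jinha} is automatic, since applying the hypothesis of the theorem to $I = [d+t]$ gives $\vec{v} \in \conv(V)$.

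The key topological input is that $\C$ is $d$-Leray, i.e.\ $\blambda(\C) \le d$. This is essentially the content of B\'ar\'any's theorem (Theorem \ref{barany}(2)), reinterpreted via the Kalai--Meshulam correspondence (Theorem \ref{km}): the statement that any $d+1$ sets each capturing $\vec{v}$ in convex hull admit a rainbow capturing set is equivalent to $\blambda(\C) \le d$. Combined with the standard fact that every link of a $d$-Leray complex is itself $d$-Leray, this lets me verify the hypothesis of Theorem \ref{jinha} case-by-case on $|I|$. When $|I| \le t-1$, the Leray bound required by Theorem \ref{jinha} is $d+t-1-|I| \ge d$, which follows automatically from $lk_\C(\cs_I)$ being $d$-Leray, so the second alternative holds. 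When $|I| \ge t$, the present theorem's hypothesis gives $\vec{v} \in \conv(\cs_I)$, so $\cs_I \notin \C$ and the first alternative applies.

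Having verified one of the two alternatives for every $I \subseteq [d+t]$, Theorem \ref{jinha} produces a rainbow set $S \notin \C$, that is, $\vec{v} \in \conv(S)$. The only non-routine ingredient is the first step, establishing $\blambda(\C) \le d$, and this is already essentially packaged inside B\'ar\'any's theorem; the rest is a clean application of Jinha Kim's framework, which absorbs all of the genuine cooperative difficulty. The split between the two alternatives at $|I| = t$ explains precisely why $d+t$ sets suffice: below this threshold, Leray-ness alone carries the argument, while at and above the threshold, the cooperative convex-capture hypothesis is invoked.
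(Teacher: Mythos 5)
Your proof takes exactly the route the paper intends: the theorem is stated immediately after the remark ``Using this theorem [Theorem~\ref{jinha}] it is possible to prove\dots,'' and the paper does not give further details, so your job was to supply them. The threshold split at $|I|=t$, the identification of $\C=\{A:\vec{v}\notin\conv(A)\}$, and the verification of the two alternatives of Theorem~\ref{jinha} are all correct and are presumably what the authors had in mind. (Implicitly you read the hypothesis as ``$\vec{v}\in\conv(S_I)$ for every $I$ with $|I|\ge t$,'' which is the only non-trivial reading; taken literally for all $I$ the theorem is immediate from Theorem~\ref{barany}(2) applied to any $d+1$ of the sets.)

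One point needs repair. You justify $\blambda(\C)\le d$ by asserting that B\'ar\'any's theorem ``is equivalent to'' $\C$ being $d$-Leray via the Kalai--Meshulam correspondence. Only one direction of that equivalence is automatic: Theorem~\ref{km} shows that $\blambda(\C)\le d$ \emph{implies} the $(d+1)$-fold rainbow property, not the converse. A complex can satisfy a rainbow property of this form without being $d$-Leray, so you cannot deduce the Leray bound from B\'ar\'any. What you actually need is the independent fact that the non-capturing complex $\C=\{A\subseteq V:\vec{v}\notin\conv(A)\}$ is $d$-Leray (in fact $d$-collapsible); this is a known result that \emph{underlies} the topological proof of B\'ar\'any's theorem rather than being a consequence of it. Citing that fact directly (or reproving the collapsibility by ordering the maximal faces, which are the sets of vertices strictly separated from $\vec{v}$ by a hyperplane) closes the gap, after which the rest of your argument---the standard fact that links of $d$-Leray complexes are $d$-Leray, and the case analysis on $|I|$---goes through exactly as you wrote it.
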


 Theorem \ref{barany} is the case $t=1$.

  \subsection{Cooperative conditions for spanning rainbow sets}

Here is a cooperative version of Theorem 
 \ref{spanning}.

\begin{theorem}
    Let $\M$ be a matroid of rank $n$,  let $T \subseteq V(\cm)$, and let $\A=(A_1, \ldots,A_n)$ be a system of subsets of $V(\M)$, satisfying the following: for every $J \subseteq [n]$, either $\rank(A_J)\ge |J|$ or $v \in span_\M(A_J)$. Then there exists an $\A$-rainbow set spanning  $T$. 
\end{theorem}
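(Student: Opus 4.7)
My plan is to reduce to Rado's theorem (Theorem~\ref{rado}) by analysing a minimal failure of its rank condition.

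First I would distinguish two cases. If $\rank(A_J) \ge |J|$ for every $J \subseteq [n]$, then Theorem~\ref{rado} applied in $\M$ gives a full $\A$-rainbow independent set of size $n$; since $\rank(\M)=n$, this set is a base of $\M$ and so spans $V(\M) \supseteq T$, finishing this case.

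Otherwise, I would choose $J_0 \subseteq [n]$ minimal with $\rank(A_{J_0}) < |J_0|$; the cooperative hypothesis applied to $J_0$ then yields $T \subseteq \operatorname{span}_\M(A_{J_0})$ (reading the stray ``$v$'' as ``for every $v \in T$''). The combinatorial heart of the argument is that minimality forces $\rank(A_{J_0}) = |J_0|-1$: for any $j \in J_0$, the proper subset $J_0 \setminus \{j\}$ satisfies $\rank(A_{J_0 \setminus \{j\}}) \ge |J_0|-1$ by minimality of $J_0$ and $\rank(A_{J_0 \setminus \{j\}}) \le \rank(A_{J_0}) < |J_0|$ trivially, pinning both ranks to $|J_0|-1$. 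Now fix any $j \in J_0$ and apply Theorem~\ref{rado} to the subfamily indexed by $J_0 \setminus \{j\}$: the rank condition $\rank(A_J) \ge |J|$ holds for proper subsets $J \subsetneq J_0 \setminus \{j\}$ by minimality of $J_0$, and for $J = J_0 \setminus \{j\}$ itself by the pinning above. Rado then produces a rainbow independent set $R_0$ of size $|J_0|-1 = \rank(A_{J_0})$; as a subset of $A_{J_0}$ of matching rank, $R_0$ satisfies $\operatorname{span}_\M(R_0) = \operatorname{span}_\M(A_{J_0}) \supseteq T$, and regarded as a partial $\A$-rainbow set it witnesses the conclusion.

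The main obstacle is largely notational: parsing the cooperative hypothesis correctly and confirming that the paper's conventions allow the output to be a partial rainbow set rather than a full one. Should a \emph{full} rainbow set be demanded, one would have to supplement the argument with an injective extension of $R_0$ over the remaining indices $[n] \setminus (J_0 \setminus \{j\})$, and such an extension does not obviously follow from the stated hypothesis.
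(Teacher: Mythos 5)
Your proof is correct and follows essentially the same route as the paper's: reduce to the base case by Rado, otherwise take a minimal witness $J_0$ of the rank failure, apply Rado to $J_0 \setminus \{j\}$, and use the span equality $\operatorname{span}_\M(R_0) = \operatorname{span}_\M(A_{J_0}) \supseteq T$. Your pinning step $\rank(A_{J_0}) = |J_0|-1$ and your explicit check of the Rado hypothesis on the subfamily are both left implicit in the paper (indeed, every $I \subseteq J_0\setminus\{j\}$ is already a proper subset of $J_0$, so minimality alone covers all of them; splitting into ``proper subsets'' and ``$J_0\setminus\{j\}$ itself'' is harmless but unnecessary). Your reading of the stray ``$v$'' as ``$T$'' is confirmed by the paper's own argument, and the paper's proof likewise produces a \emph{partial} rainbow set, so your caveat about the full/partial distinction is resolved in your favor.
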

This is a common 
generalization of Rado's theorem (Theorem 
\ref{rado}), which is the case $T=V$,  and Theorem 
\ref{spanning}
which is the case in which all $J$s satisfy the second condition.

\begin{proof}
    As noted, we may assume that there exists $J \subset [n]$ for which $\rank(\A_J) < |J|$, or else by Rado's theorem (Theorem \ref{rado}) there exists a rainbow base. Choose a minimal such $J$.

    Certainly $|J| \geq \rank(\A_J)+1 \geq 1$, so choose any $j \in J$. Applying Rado's theorem to $I:=J \backslash \{j\}$ yields then a  rainbow $R \in \M$ representing  sets $A_i$ for all $i \in I$.
    By the assumption of the theorem 
    $T \subseteq \vspan(\A_J)$, so it suffices to prove that $ \vspan(\A_J) = \vspan(R)$.
       Clearly $\vspan(R) \subseteq \vspan(\A_J)$, and since $\rank(R)=\rank(\A_J)=|J|-1$, this inclusion is in fact equality. 
\end{proof}

This yields a cooperative version of Theorem \ref{main}. Denote by $\C(G)$ is the set of components of a graph $G$. 

\begin{theorem}\label{refinedmain}
Let $F_1, \ldots ,F_n$ be sets of edges in a graph with $n$ vertices, satisfying the following condition: for every $J \subseteq [n]$ either
\begin{itemize}
    \item 
$\sum_{C \in \C( F_J)}(|V(C)|-1) \ge |J|$, or
\item $F_J$ contains an odd cycle. 
\end{itemize}
Then there exists a rainbow odd cycle. 
\end{theorem}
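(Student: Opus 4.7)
The plan is to reduce Theorem \ref{refinedmain} to the cooperative spanning theorem just proved, paralleling the derivation of Theorem \ref{odd} from Theorem \ref{spanning}. Let $\M$ be the linear matroid over $\mathbb{F}_2$ on $\bigcup_{i \in [n]} F_i$ in which each edge $e$ is represented by the vector $v_e := (\chi_e, 1) \in \mathbb{F}_2^{n+1}$, and set $T := \{(\vec{0},1)\}$. By Lemma \ref{l.bipartitevectorspanning}, a set $F$ of edges satisfies $T \subseteq \vspan_\M(F)$ if and only if $F$ contains an odd cycle, so any rainbow set of $\M$ spanning $T$ automatically contains a rainbow odd cycle.

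The main step will be to verify the hypothesis of the cooperative spanning theorem for the system $(F_1,\ldots,F_n)$: that for every $J\subseteq [n]$, either $\rank_\M(F_J) \ge |J|$ or $T \subseteq \vspan_\M(F_J)$. The second alternative is exactly the ``odd cycle'' clause of Theorem \ref{refinedmain}. When instead $F_J$ is bipartite, no $\mathbb{F}_2$-dependency among the $v_e$ with $e \in F_J$ can involve an odd number of edges (such a dependency would exhibit an odd Eulerian subgraph, and hence an odd cycle, inside $F_J$), so the rank of $F_J$ in $\M$ coincides with its graphic rank $\sum_{C \in \C(F_J)}(|V(C)|-1)$; the first clause of Theorem \ref{refinedmain} then translates to $\rank_\M(F_J) \ge |J|$. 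Invoking the cooperative spanning theorem will then produce a rainbow set $R$ with $T \subseteq \vspan_\M(R)$, and the opening remark locates a rainbow odd cycle inside $R$.

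The one wrinkle I anticipate is that the cooperative spanning theorem is stated for a matroid of rank equal to the number of sets, while our $\M$ may have rank strictly less than $n$ (the graph need not be connected, and need not contain any odd cycle globally). I expect this to be cosmetic: if $\rank(\M) < n$ then the rank clause already fails at $J=[n]$, placing us automatically in the ``bad $J$'' branch of the proof of the cooperative spanning theorem, whose minimality argument never invokes $\rank(\M)=n$. Thus no substantive modification should be required, and the reduction will be complete.
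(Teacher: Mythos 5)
Your proof is correct and is essentially the argument the paper intends (the paper leaves it implicit, merely saying ``This yields\ldots''): pass to the $\mathbb{F}_2$-linear matroid on the vectors $(\chi_e,1)$, use Lemma~\ref{l.bipartitevectorspanning} to translate ``contains an odd cycle'' into ``spans $T=\{(\vec 0,1)\}$'', and use the identity of $\rank_\M$ with the graphic rank on bipartite $F_J$ (an $\mathbb{F}_2$-dependency among the $(\chi_e,1)$'s with odd support would give an odd Eulerian subgraph, hence an odd cycle) to match the rank clause. Your handling of the wrinkle is also right: when $\rank(\M)<n$, the condition $\rank(\A_{[n]})\ge n$ fails, so the Rado branch of the cooperative spanning theorem never arises and its remaining argument nowhere uses $\rank(\M)=n$; one may equivalently adjoin the vector $(\vec 0,1)$ to the ground set, both to reconcile $T\subseteq V(\M)$ literally and to restore rank $n$ when desired, without altering any hypothesis or conclusion.
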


\section{Rainbow independent sets in hypergraphs} 

We conclude with a rather neglected topic: rainbow independent sets  in hypergraphs (to be distinguished from rainbow matchings in hypergraphs, which are independent sets in line graphs). 

A set $I$ of vertices is said to be {\em independent} in a hypergraph $H$ if it does not contain any edge of $H$. By $\alpha(H)$ we denote the largest size of an independent set. 
In \cite{carotuza} and in \cite{shachnai} the following was proved:

\begin{theorem}
 $\alpha(H) \ge \frac{|V(H)|}{d^{1/{k-1}}}$
\end{theorem}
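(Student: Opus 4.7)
The plan is to use the probabilistic deletion method (due to Spencer, with the constant-factor sharpening coming from Caro--Tuza). Let $n=|V(H)|$, let $k$ denote the uniformity of $H$, and let $d$ be its average degree, so double-counting gives $|E(H)| = nd/k$.

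The first step is to sample a random subset $X\subseteq V(H)$ by including each vertex independently with probability $p$, to be optimized later. By linearity of expectation, $\mathbb{E}|X| = pn$. Each edge $e \in E(H)$ satisfies $e \subseteq X$ with probability $p^k$, so the expected number of edges entirely contained in $X$ is $p^k \cdot nd/k$.

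Second, form an independent set $I\subseteq X$ by deleting one vertex from each edge that lies in $X$. Then
\[
\mathbb{E}|I| \ \geq\ pn - \tfrac{p^k nd}{k}.
\]
Optimizing by differentiation gives $p = d^{-1/(k-1)}$ (which lies in $[0,1]$ once $d \ge 1$), and substituting yields $\mathbb{E}|I| \geq \bigl(1-\tfrac{1}{k}\bigr)\cdot \dfrac{n}{d^{1/(k-1)}}$. Since the expectation is achieved by some outcome, an independent set of at least this size exists.

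The main obstacle is cleaning up the constant factor $(1-\tfrac{1}{k})$ to match the bound exactly as stated. To do so, I would replace the uniform sampling probability by a vertex-dependent weighting (the Caro--Tuza recipe): assign each vertex $v$ a weight depending on its degree $d(v)$, run a weighted greedy / random-permutation argument, and invoke convexity of $x\mapsto x^{1/(k-1)}$ (via Jensen's inequality) to move from the per-vertex bound to one involving only the average degree $d$. This convexity step is where degree-irregularity is absorbed, and it is the only non-routine part of the argument; the deletion-method skeleton above already delivers the correct qualitative shape of the bound.
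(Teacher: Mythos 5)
The paper does not prove this theorem; it cites it to Caro--Tuza and Shachnai--Srinivasan, so there is no internal argument to compare against, and your proof must be judged on its own.

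Your deletion-method computation is standard and correct: it yields $\alpha(H)\ge \bigl(1-\tfrac{1}{k}\bigr)\,n/d^{1/(k-1)}$. The proposal then stalls exactly where the real work would be. The ``cleanup'' of the factor $1-\tfrac{1}{k}$ to $1$ is only gestured at (``I would replace\dots, run a weighted greedy / random-permutation argument\dots, invoke convexity''), and in fact no such cleanup is possible, because the bound with constant exactly $1$ is false as literally stated. Take $H$ to be a single $k$-edge: then $n=k$, every degree equals $1$, so $d=1$, and the claimed bound would give $\alpha(H)\ge k$, whereas $\alpha(H)=k-1$. Your deletion-method constant $1-\tfrac{1}{k}$ is tight on this example. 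What Caro--Tuza actually establish is a per-vertex bound of the form $\alpha(H)\ge\sum_{v}f(d(v))$ in which $f$ carries additive corrections (of ``$+1$'' type, as in the Caro--Wei bound $\sum_v 1/(d(v)+1)$ for graphs) that make the small-degree cases come out right; one then passes to a bound in the average degree via convexity. The survey's displayed inequality should be read as the leading-order form of that result, with those corrections suppressed. A smaller slip: the map whose convexity you need for Jensen is the decreasing function $x\mapsto x^{-1/(k-1)}$ (which is convex), not $x\mapsto x^{1/(k-1)}$ (which for $k\ge 3$ is concave). In short, the skeleton is the right kind of argument, but the step you defer is both unwritten and, for the statement as given, unachievable.
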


 Using the Lov\'asz Local Lemma it is possible to prove:

\begin{theorem}
 If $H$ is $k$-uniform with maximal vertex degree  $d$, then any collection  of disjoint sets of vertices of $H$, each  of size at least $ked^{1/{k-1}}$,  has a full rainbow independent set.
\end{theorem}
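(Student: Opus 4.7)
The plan is to apply the symmetric Lov\'asz Local Lemma to the random process that picks one vertex uniformly and independently from each of the given disjoint sets $V_1, \ldots, V_m$. By passing to subsets we may assume $|V_i| = s$ for every $i$, where $s = \lceil ked^{1/(k-1)}\rceil$. Let $X_i$ be a uniformly random element of $V_i$, chosen independently across $i$, and set $R = \{X_1, \ldots, X_m\}$. For each edge $e \in E(H)$, let $A_e$ be the bad event that $e \subseteq R$. Since $R$ is a full rainbow set of $\{V_1,\ldots,V_m\}$ by construction, it suffices to prove that $\Pr[\bigcap_e \overline{A_e}] > 0$.

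If some two vertices of $e$ lie in the same $V_i$, then $\Pr[A_e] = 0$ and the event is vacuous. Otherwise the vertices of $e$ lie in $k$ distinct sets $V_{i_1}, \ldots, V_{i_k}$, in which case $\Pr[A_e] = s^{-k}$ and $A_e$ depends only on the variables $X_{i_1}, \ldots, X_{i_k}$. Hence $A_e$ is mutually independent of the collection of all $A_f$ whose supporting indices are disjoint from $\{i_1, \ldots, i_k\}$. Each $V_{i_j}$ contains $s$ vertices, each incident to at most $d$ edges of $H$, so the number of edges $f$ meeting $\bigcup_{j} V_{i_j}$, and thus the dependency degree of $A_e$, is at most $ksd$.

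The symmetric LLL therefore succeeds as soon as $e \cdot s^{-k} \cdot ksd \le 1$, equivalently $s^{k-1} \ge ekd$ (with $e$ denoting Euler's constant). The chosen $s \ge ked^{1/(k-1)}$ satisfies
\[
s^{k-1} \ge (ke)^{k-1}\, d \ge ekd,
\]
the last inequality following from $(ke)^{k-2} \ge 1$ for $k \ge 2$ (the case $k=1$ is trivial). Thus $\Pr[\bigcap_e \overline{A_e}] > 0$ and a full rainbow independent set exists. The only mildly delicate step is the bookkeeping of the dependency graph --- one must track the events by which \emph{parts} of the partition they touch rather than by the underlying edges --- but the crude bound $ksd$ on the degree is already slack enough to absorb the constants in the hypothesis.
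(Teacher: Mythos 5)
Your proof is correct, and it is exactly the argument the paper has in mind (the paper states the theorem without proof, only remarking that it follows from the Lov\'asz Local Lemma). The bookkeeping is right: after truncating to $|V_i|=s$, each nonvacuous $A_e$ has probability $s^{-k}$ and is mutually independent of all $A_f$ whose supporting indices miss those of $e$, and the number of edges meeting $\bigcup_j V_{i_j}$ is at most $ksd$, so the symmetric LLL condition $e\,s^{-k}\,ksd\le 1$ reduces to $s^{k-1}\ge ekd$, which $s\ge ked^{1/(k-1)}$ supplies since $(ke)^{k-2}\ge 1$ for $k\ge 2$.
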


But the coefficient $ked$ can probably be replaced by a smaller one, for example by a result of Haxell \cite{penny}, for $k=2$ (namely for graphs) the coefficient is  $2d$, rather than $2ed$. Here is a conjecture for $k=3$, stemming from examples whose description we omit:

\begin{conjecture}
If $H$ is $3$-uniform with maximal  degree  $d$, then any collection  of disjoint sets of vertices of $H$, each  of size at least $1.1\sqrt{d}$,  has a full rainbow independent set.   
\end{conjecture}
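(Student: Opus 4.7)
The plan is to attack the conjecture by the probabilistic method, with the Lovász Local Lemma as the starting tool and then attempt to tighten the constant via either a semi-random nibble or a topological argument. First I would pass to a subset of each part so that every $V_i$ has size exactly $s=\lceil 1.1\sqrt{d}\rceil$, and select a random candidate rainbow set $R=\{x_1,\ldots,x_n\}$ by choosing $x_i\in V_i$ uniformly and independently. The only edges of $H$ that can be fully contained in $R$ are those whose three vertices lie in three different parts; for any such edge $e$ one has $P(e\subseteq R)=1/s^3\le 1/((1.1)^3 d^{3/2})$.

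Next I would apply the symmetric LLL to the events $A_e=\{e\subseteq R\}$. Each $A_e$ is mutually independent of the collection of $A_{e'}$ for which $e'$ avoids the three parts containing $e$; since each of those parts contributes at most $sd$ incident edges, the dependency degree is at most $D\le 3sd$. The condition $ep(D+1)\le 1$ then reads $3esd/s^3\le 1$, i.e., $s\ge \sqrt{3ed}\approx 2.86\sqrt d$. This recovers the constant-factor version of the conjecture and already improves on the $3e\sqrt d$ bound quoted in the preceding theorem, but it is nowhere near $1.1\sqrt d$.

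The hard part is closing the constant all the way down to $1.1$. I would try two complementary refinements. One is an iterated semi-random argument: in each round commit to a small random rainbow partial selection, delete the parts already represented together with the vertices of any newly induced edge, and show that the surviving hypergraph has its maximum degree shrunk by a factor matching the fraction of parts removed, so that reapplying LLL preserves the invariant throughout. The other is a topological route via Theorem \ref{matcomp}: letting $\M$ be the partition matroid of the $V_i$ and $\C$ the independence complex of $H$, it suffices to establish $\eta(\C[S])\ge \operatorname{rank}(\M.S)$ for every $S\subseteq V(H)$, which in turn reduces to a Meshulam-type nerve estimate for independence complexes of $3$-uniform hypergraphs of bounded degree.

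I expect the main obstacle to be exactly the gap between generic probabilistic/topological bounds and the sharp constant. The constant $1.1$ almost certainly reflects an extremal configuration, probably built from Steiner-triple-system- or affine-plane-like edge structures, and the gap between $\sqrt{3e}\approx 2.86$ and $1.1$ cannot be absorbed by routine refinements of LLL (lopsided or cluster-expansion improvements) alone. The realistic hope is that, as in Haxell's treatment of the graph case ($k=2$), one can design an argument whose losses are bounded precisely by quantities optimized at the conjectured extremal example, rather than by worst-case dependency counts; producing such an argument is, as far as I can see, the real content of the conjecture.
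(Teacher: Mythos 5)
This statement is a conjecture in the paper, not a theorem; the paper offers no proof, only the remark that replacing the constant $1.1$ by $3e$ makes the statement follow from the Lov\'asz Local Lemma. So there is no paper proof to compare against, and you have correctly written a research plan rather than a proof. Within that plan your LLL computation is sound and in fact a bit sharper than the paper's quoted bound: after truncating each part to size $s$, each transversal edge $e$ is bad with probability $s^{-3}$ and has dependency degree at most $3sd$, so the symmetric LLL condition $ep(D+1)\le 1$ becomes $3ed/s^2\le 1$, i.e.\ $s\ge\sqrt{3e}\,\sqrt d\approx 2.86\sqrt d$. The paper's general $k$-uniform theorem with threshold $ke\,d^{1/(k-1)}$ gives $3e\sqrt d$ at $k=3$, which is the weaker constant; the honest LLL exponent is $(ek)^{1/(k-1)}$, matching your number.

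The genuine gap is everything after that. Neither the semi-random nibble nor the topological route through $\eta(\C[S])\ge\rank(\M.S)$ is carried out, and nothing in your text bridges the factor of roughly $2.6$ between $\sqrt{3e}$ and the conjectured $1.1$. You correctly diagnose that the constant $1.1$ is calibrated to extremal examples (which the paper says exist but omits), so a Haxell-type argument whose losses are tuned to those examples rather than to worst-case dependency counts is the right target, but identifying that target is not the same as hitting it. As submitted, this is a correct LLL proof of a weaker statement plus a plausible outline, not a proof of the conjecture.
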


Using the Lov\'asz Local Lemma it is possible to prove this result with $1.1 \sqrt{d}$ replaced by $3e \sqrt{d}$.

{\bf Data Availability} No data was needed to produce the results in this paper.

{\bf Acknowledgements} We are indebted for fruitful discussions with Eli Berger, Ron Holzman and Zilin Jiang. We are also indebted to Dmitry Falikman for computer tests of some of the  conjectures, and useful  insights. 


\end{document}